\newcommand{\bea}{\begin{eqnarray}} 
\newcommand{\eea}{\end{eqnarray}} 
\newcommand{\bee}{\begin{eqnarray*}} 
\newcommand{\eee}{\end{eqnarray*}} 
\newcommand{\al}{\begin{align*}} 
\newcommand{\eal}{\end{align*}} 
\newcommand{\be}{\begin{equation}} 
\newcommand{\ee}{\end{equation}} 
\newcommand{\bem}{\begin{pmatrix}} 
\newcommand{\eem}{\end{pmatrix}}
\def\a{\alpha} 
\def\c{\gamma}
\def\m{\mu}
\def\u{\upsilon}
\def\D{D_1}
\newcommand{\gt}[1]{\mathfrak{#1}}
\newcommand{\mc}[1]{\mathcal{#1}}
\newcommand{\comment}[1]{}
\newcommand{\RR}{{\mathbb R}}
\newcommand{\CC}{{\mathbb C}}
\newcommand{\PP}{{\mathbb P}}
\newcommand{\ZZ}{{\mathbb Z}}
\newcommand{\QQ}{{\mathbb Q}}
\newcommand{\HH}{{\mathbb H}}
\newcommand{\Aut}{\operatorname{Aut}}
\newcommand{\Irr}{\operatorname{Irr}}
\newcommand{\Id}{\operatorname{Id}}
\newcommand{\tr}{\operatorname{{tr}}}
\newcommand{\sgn}{\operatorname{sgn}}
\newcommand{\ex}{\operatorname{e}} 
\newcommand{\wh}{{\rm wh}}	
\newcommand{\reg}{{\rm reg}}
\newcommand{\vh}{h}
\newcommand{\xmod}{{\rm \;mod\;}}
\newcommand{\SL}{\operatorname{\textsl{SL}}}      
\newcommand{\mpt}{\widetilde{\SL}_2}      
\newcommand{\GL}{{\textsl{GL}}}      
\newcommand{\vn}{V^{\natural}} 
\newcommand{\G}{\Gamma}	
\newcommand{\MM}{\mathbb{M}}
\newcommand{\Th}{\textsl{Th}}
\newcommand{\Kb}{\kappa}	
\newcommand{\Km}{M}	
\newcommand{\SQ}{\operatorname{\textsl{SQ}}}
\newcommand{\pd}{\uplambda}
\newtheorem{thm}{Theorem}[subsection]
\newtheorem{lem}[thm]{Lemma}
\newtheorem{pro}[thm]{Proposition}
\theoremstyle{definition}
\newtheorem{eg}[thm]{Example}
\theoremstyle{remark}
\newtheorem{rmk}[thm]{Remark}
\numberwithin{equation}{subsection}
\begin{document}

\setstretch{1.26}

\renewcommand{\thefootnote}{\fnsymbol{footnote}} 

\title{
\vspace{-35pt}
\textsc{\huge{ 
Modular Products and 
Modules for\\ \hspace{1pt} Finite Groups
\footnotetext{\emph{MSC2020:} 11F11, 11F22, 11F27, 11F37.}     
}}
    }
    
\renewcommand{\thefootnote}{\arabic{footnote}}

\author[1,2]{John F. R. Duncan\thanks{jduncan@gate.sinica.edu.tw}\thanks{john.duncan@emory.edu}}
\author[3]{Jeffrey A. Harvey\thanks{j-harvey@uchicago.edu}}
\author[4]{Brandon C. Rayhaun\thanks{brayhaun@stanford.edu}}

\affil[1]{Institute of Mathematics, Academia Sinica, Taipei, Taiwan.}
\affil[2]{Department of Mathematics, Emory University, Atlanta, GA 30322, U.S.A.}
\affil[3]{Enrico Fermi Institute and Department of Physics, University of Chicago, Chicago, IL 60637, U.S.A.}
\affil[4]{Institute for Theoretical Physics, Stanford University, Palo Alto, CA 94305, U.S.A.}

\date{\vspace{-.45in}}

\maketitle

\abstract{
Motivated by the appearance of 
penumbral moonshine,
and 
by evidence that penumbral moonshine enjoys an extensive relationship to 
generalized monstrous moonshine via infinite products,
we establish a general construction in this work
which uses 
singular theta lifts and a concrete construction at the level of modules for a finite group to
translate between 
moonshine in weight one-half
and 
moonshine 
in weight zero.
This construction serves as a foundation for a companion paper in which we explore the connection between penumbral Thompson moonshine and a special case of generalized monstrous moonshine in detail.
}

\clearpage

\tableofcontents

\section{Introduction}\label{sec:int}

Penumbral moonshine \cite{pmo} provides a counterpart to umbral moonshine \cite{Cheng:2012tq,cheng2014umbral,cheng2018weight} in which mock modularity is replaced by modularity, 
and Fricke genus zero groups take on the role (see \cite{Cheng:2016klu}) played by non-Fricke genus zero groups.

Curiously, the finite groups that appear in penumbral moonshine are generally much larger than those that appear in umbral moonshine, and in many cases are identifiable as, or closely related to, centralizer subgroups of the monster. 
It is natural to ask if this 
is purely coincidental, or a consequence of 
a concrete connection.
We provide 
evidence in support of the latter alternative in \cite{pmt}, wherein we explore a relationship between penumbral Thompson moonshine (which was first exposited in \cite{Harvey:2015mca}), and a special case of generalized moonshine for the monster. 

The present work provides a foundation for the analysis of \cite{pmt} by establishing a ``lift'' 
of the singular theta lift of 
\cite{Harvey:1995fq,Borcherds:1996uda},
from modular forms to modules for a finite group. 
More concretely, the construction we present here 
associates a (virtual) graded $G$-module for which the McKay--Thompson series arising are weakly holomorphic modular forms of weight $0$, to any (virtual) graded $G$-module for which the McKay--Thompson series arising are weakly holomorphic vector-valued modular forms of weight $\frac12$ of a suitable type,
for any finite group $G$. 
Moreover, the McKay--Thompson series of weight $0$ that arise from this construction are defined by infinite products.

\subsection{Products}\label{sec:int-pro}

The problem of obtaining automorphic forms as infinite products has received considerable attention. A famous example, which first appeared in \cite{Borcherds1995} (see Example 2 in \S~13 of op.\ cit.), is the infinite product formula for the $j$-function, 
\begin{gather}\label{eqn:int-pro:j}
j(\tau) = q^{-1} \prod_{n>0} (1-q^n)^{3c_0(n^2)}.
\end{gather}
Here $q=\ex(\tau)$, where $\ex(x):=e^{2\pi i x}$, and 
the product in (\ref{eqn:int-pro:j}) is constructed from the Fourier coefficients 
$c_0(D)$
of the unique 
weakly holomorphic modular form 
\begin{gather}\label{eqn:int-pro:f0}
f_0(\tau) 
= \sum_{D\geq -3}
c_0(D)
q^D = q^{-3}-248q+26752q^4-85995q^5+\dots
\end{gather} 
of weight $\frac12$ for $\Gamma_0(4)$ 
such that $c_0(D)=0$ unless $D\equiv 0,1\xmod 4$,
and such that 
$f_0(\tau)=q^{-3}+O(q)$ 
as $\Im(\tau)\to \infty$. 

Building on work of Borcherds \cite{Borcherds1995,Borcherds:1996uda}, 
Zagier \cite{ZagierTSM}, and then Bruinier and Ono \cite{BruinierOno2010}, 
established a variety of generalizations of this formula. 
In particular, they studied ``twisted'' Borcherds products (see \S~\ref{sec:int-twi}), which, among other things, realize 
particular rational functions of the elliptic modular invariant (\ref{eqn:int-pro:j}) involving its values at CM points (cf.\ (\ref{eqn:out-inv:PsiWD1r1-trc})). 

Now the $j$-function is, up to an additive constant, the McKay--Thompson series 
\begin{gather}\label{eqn:int-pro:Te}
T_e(\tau)
=j(\tau)-744
=q^{-1}+196884q+21493760q^2+864299970q^3
+\dots
\end{gather}
attached to the identity element of the monster group, $\MM$, by monstrous moonshine \cite{Conway:1979qga}.
It develops that similar results involving twisted Borcherds products hold for other functions appearing in this setting. 
For example, infinite product formulae are obtained in \cite{kim_2006} for rational functions of McKay--Thompson series $T_g$
attached to elements of prime order $o(g)=p$ in the monster, for each prime $p$ that divides 
$|\MM|$.

These results on infinite products follow from the application of singular theta lifts---either similar to or the same as those considered in \cite{Harvey:1995fq,Borcherds:1996uda}---to 
weakly holomorphic modular forms
of weight $\frac12$
that transform with respect to a suitably chosen Weil representation 
of the metaplectic double cover $\widetilde{\SL}_2(\ZZ)$ of the modular group $\SL_2(\ZZ)$. 
In the untwisted case, the idea is that the infinite product constructed from the Fourier coefficients of a suitable modular form $\check{F}$ is essentially the exponential of a regularized integral 
\begin{gather}\label{eqn:int-pro:int}
\int_{\mathcal{F}}^\reg 
\overline{\Theta_{L}(\tau,v^+)}\check{F}(\tau)
\frac{{\rm d}\tau_1{\rm d}\tau_2}{\tau_2}.
\end{gather}
Here 
$\mathcal{F}$ is the standard fundamental domain for the action of $\SL_2(\ZZ)$ on the complex upper half-plane $\HH$,
we take $L$ to be a lattice 
that defines the Weil representation in question (see \S~\ref{sec:pre-lat}), 
we write $\Theta_L$ for the associated Siegel theta function (see \S~\ref{sec:pre-mod}),
and $\tau=\tau_1+i\tau_2$ is the decomposition of $\tau$ into its real and imaginary parts. 
(See \S~\ref{sec:res-lif} and especially (\ref{eqn:res-lif:stl}) for more detail on (\ref{eqn:int-pro:int}).)

For an example of this framework in action, define $\check{F}=(\check{F}_0,\check{F}_1)$ by requiring that 
$\check{F}_r$ belongs to $q^{\frac{r^2}{4}}\ZZ[[q]][q^{-1}]$
and
\begin{gather}\label{eqn:int-pro:checkF}
\check{F}_0(4\tau)+\check{F}_1(4\tau)=
f_0(\tau),
\end{gather} 
where 
$f_0$
is as in (\ref{eqn:int-pro:f0}).
Then $\check{F}$ transforms under the Weil representation of $\widetilde{\SL}_2(\ZZ)$ (see (\ref{eqn:pre-lat:wei}--\ref{eqn:pre-lat:weiLmN})) defined by the lattice $L=\sqrt{2}\ZZ\oplus \Gamma^{1,1}$ (cf.\ (\ref{eqn:pre-lat:LmN})), and for this choice of $L$ we may naturally regard the variable
$v^+$ in $\Theta_L(\tau,v^+)$ as an element of the upper half-plane 
(cf.\ (\ref{eqn:ZtoZL}--\ref{eqn:PsiellZcheckF})).
The integral \eqref{eqn:int-pro:int} then evaluates to $-4\log|j(v^+)|$ (cf.\ (\ref{eqn:STL:logPsi})),
and the method of 
\cite{Harvey:1995fq,Borcherds:1996uda} produces the product formula (\ref{eqn:int-pro:j}) for $j$ (cf.\ (\ref{eqn:STL:infpro})).

\subsection{Moonshine}\label{sec:int-moo}

In \cite{Harvey:2015mca} it was observed that the cube root of (\ref{eqn:int-pro:j}) relates the 
graded dimensions of two graded modules for the sporadic simple group, $\Th$, of Thompson \cite{MR399257,MR0399193}.
Indeed, $T_{\rm 3C}(\tau)=j(3\tau)^{\frac13}$ is the McKay--Thompson series associated to 
an element of the class 3C of the monster, so according to Borcherds' proof \cite{Borcherds1992} of the monstrous moonshine conjecture, 
$j(3\tau)^{\frac13}$ coincides with the trace of an element of the class 3C on the moonshine module $\vn$ of Frenkel, Lepowsky and Meurman \cite{flm,FLMBerk,frenkel1989vertex}. 
The centralizer of such an element takes the form 
$C_{\mathbb{M}}(\mathrm{3C})\cong \ZZ/3\ZZ\times\Th$,
and the group $\Th$ admits no non-trivial central extensions (see e.g.\ \cite{atlas}). 
Applying the yoga 
of generalized monstrous moonshine (see e.g.\ \cite{Carnahan2012,Carnahan:2012gx}) to these facts we obtain an action of $\Th$ on the 3C-twisted moonshine module 
$\vn_{{\rm 3C}}$, whose graded dimension is given by 
\begin{gather}\label{eqn:int-moo:jottauot}
j(\tfrac13\tau)^{\frac13}
=
q^{-\frac19}\prod_{n>0}(1-q^{\frac n3})^{c_0(n^2)}.
\end{gather}

The second graded $\Th$-module is the one introduced in \cite{Harvey:2015mca}, which we here denote by 
\begin{gather}\label{eqn:int-moo:breveW31}
\breve{W}^{(-3,1)}=\bigoplus_{D\geq -3} \breve{W}^{(-3,1)}_D,
\end{gather} 
following the conventions of \cite{pmo,pmt}.
Its (signed) graded dimension is given by
\begin{gather}\label{eqn:int-moo:2f0248}
	\breve{F}^{(-3,1)}(\tau) 
	= 2f_0(\tau) +248\theta(\tau)
	= 2q^{-3}+248+54000q^4-171990q^5+\dots
\end{gather}
where $f_0$ is as in (\ref{eqn:int-pro:f0}) and $\theta(\tau):=\sum_n q^{n^2}$.
Significantly, the Thompson group admits a unique irreducible representation of dimension $248$, which we henceforth denote ${\bf 248}$. 
Thus, at the level of Fourier coefficients we have
\begin{gather}\label{eqn:int-moo:2c0}
	2c_0(n^2) = 
	\tr(e|\breve{W}^{(-3,1)}_{n^2}) - 2 \tr(e\vert {\bf 248})
\end{gather}
for $e$ the identity element of $\Th$ (where we regard each $W^{(-3,1)}_D$ as a virtual $\Th$-module, in order to handle the signs in (\ref{eqn:int-moo:2f0248})).

Comparing (\ref{eqn:int-moo:jottauot}) with (\ref{eqn:int-moo:2c0}) 
we are motivated to ask if
there is a deeper relationship between the two $\Th$-modules $\vn_{{\rm 3C}}$
and $\breve{W}^{(-3,1)}$. 
For example, if we twine (\ref{eqn:int-moo:2c0}) by replacing each $e$ on the right-hand side with an arbitrary Thompson group element $g$, does the singular theta lift (\ref{eqn:int-pro:int}) of the corresponding twine of $\check{F}$ as in (\ref{eqn:int-pro:checkF}) have a meaningful relationship to the twine of the square of (\ref{eqn:int-moo:jottauot}) defined by the action of $g$ on the 3C-generalized moonshine module $\vn_{{\rm 3C}}$?
This is a question we investigate in detail in \cite{pmt}. One of the problems we face in determining an answer is that of appropriately defining ``the corresponding twine of $\check{F}$,'' because if we naively replace $f_0$ in (\ref{eqn:int-pro:checkF}) with $\frac12\breve{F}^{(-3,1)}_g-\frac12\tr(g|{\bf 248})\theta$ (cf.\ (\ref{eqn:int-moo:2f0248}--\ref{eqn:int-moo:2c0})),
where 
\begin{gather}\label{eqn:int-moo:breveF31g}
\breve{F}^{(-3,1)}_g(\tau)
=
\sum_{D\geq -3} \tr(g|\breve{W}^{(-3,1)}_D)q^D,
\end{gather} 
then we obtain a modular form that fails to transform appropriately under the full group $\widetilde{\SL}_2(\ZZ)$, and in particular fails to satisfy the hypotheses of \cite{Borcherds:1996uda}.

It turns out that Borcherds himself solved an essentially similar problem earlier, in proving \cite{Borcherds1992} the monstrous moonshine conjecture. 
In that work a key role is played by the monster Lie algebra, $\gt{m}$, which is constructed in a functorial way from $\vn$, and has
\begin{gather}\label{eqn:int-moo:den}
\sum_{m\geq -1} c_e(m)p^m 
-\sum_{n\geq -1}c_e(n)q^n
= p^{-1}\prod_{m>0,n\geq -1}(1-p^mq^n)^{c_e(mn)}
\end{gather}
for its denominator formula, where 
$c_e(n)$ is the coefficient of $q^n$ in the Fourier expansion of $T_e$ (so that the left-hand side of (\ref{eqn:int-moo:den}) coincides with $T_e(\sigma)-T_e(\tau)$ according to (\ref{eqn:int-pro:Te}), when $p=\ex(\sigma)$). 
The identification of a McKay--Thompson series $T_g$, as specified by Conway and Norton \cite{Conway:1979qga}, 
as the graded trace function associated to the action of 
$g\in\MM$
on $\vn=\bigoplus_{n\geq -1}\vn_n$
depends upon a corresponding twine of the identity (\ref{eqn:int-moo:den}), whereby 
we identify $c_e(n)=\dim(\vn_n)=\tr(e|\vn_n)$, and then replace 
each $e$ on the left-hand side of (\ref{eqn:int-moo:den}) with $g$.
From the method of \cite{Borcherds1992} it emerges that the correct 
generalization of (\ref{eqn:int-moo:den}) is 
\begin{gather}\label{eqn:int-moo:dentws}
\sum_{m\geq -1} \tr(g|\vn_m)p^m 
-\sum_{n\geq -1}\tr(g|\vn_n)q^n
= p^{-1}
\exp\left(
-
\sum_{m>0}\sum_{n\geq -1}
\sum_{k>0}
\tr(g^k|\vn_{mn})\frac{p^{mk}q^{nk}}k
\right)
\end{gather}
(see (8.3) of op.\ cit.).
We learn from this, in particular, that the $g$-twine of the infinite product in (\ref{eqn:int-moo:den}) should involve all the powers of $g$.

In light of the subsequent work \cite{Harvey:1995fq,Borcherds:1996uda} we may interpret (\ref{eqn:int-moo:den}) as the product formula for an $O_{2,2}$-type automorphic form resulting from an application of the singular theta lift (\ref{eqn:int-pro:int}) with $L=\Gamma^{1,1}\oplus \Gamma^{1,1}$ (cf.\ (\ref{eqn:pre-lat:LmN})) and $\check{F}=T_e$, whereas the examples of relevance to us, such as that of (\ref{eqn:int-pro:checkF}), produce automorphic forms of type $O_{2,1}$. 
The interpretation of (\ref{eqn:int-moo:dentws}) as an $O_{2,2}$-type singular theta lift is included in Carnahan's work on generalized monstrous moonshine (see in particular \cite{Carnahan2012}), which includes a general method for ``repackaging'' a family $\{T_{g^k}\}$ of modular forms of weight $0$
into a vector-valued modular form $\check{T}_g$ for $\widetilde{\SL}_2(\ZZ)$, for a suitably chosen Weil representation.
Our solution to the problem of twining the product side of (\ref{eqn:int-moo:jottauot}) will involve a directly similar strategy (see \S~\ref{sec:res-rpk}), but for singular theta lifts of type $O_{2,1}$ rather than $O_{2,2}$.

\subsection{Modules}\label{sec:int-mod}

Beyond the technical tool of repackaging, a conceptual 
point that is illuminated by reflection upon Borcherds work \cite{Borcherds1992} 
on monstrous moonshine is the analogy between 
$F^{(-3,1)}(\tau)$ and $j(\tau)^{\frac23}$ on the one hand (cf.\ (\ref{eqn:int-moo:jottauot}--\ref{eqn:int-moo:2c0})),
and $T_e(\tau)$ and $T_e(\sigma)-T_e(\tau)$ on the other (cf.\ (\ref{eqn:int-pro:Te}), (\ref{eqn:int-moo:den})).
At the level of modules for finite groups this suggests the possibility of a relationship between $\breve{W}^{(-3,1)}$ and $(\vn_{{\rm 3C}})^{\otimes 2}$ that is analogous to that which relates $\vn$ to $\gt{m}$. 
Is there an analogue of the functorial construction due to Borcherds, that produces a $\Th$-invariant Lie algebra from $\breve{W}^{(-3,1)}$, 
with 
root multiplicities given by the values $2c_0(n^2)$ (cf.\ (\ref{eqn:int-moo:2c0}))?

We face an even bigger challenge in answering this question, because the construction of $\gt{m}$ due to Borcherds depends crucially upon the $\MM$-invariant vertex operator algebra (VOA) structure on $\vn$, and there is---so far---no known counterpart for $\breve{W}^{(-3,1)}$. 
For this reason we do not know yet how to proceed, if the goal is a Lie algebraic counterpart to $\gt{m}$ in the context of $\breve{W}^{(-3,1)}$ and $(\vn_{\rm 3C})^{\otimes 2}$.

However, we do have $\Th$-module structures on $\breve{W}^{(-3,1)}$ and $(\vn_{\rm 3C})^{\otimes 2}$, and we may ask if these structures are related, or not, by a ``lift'' of the singular theta lift (\ref{eqn:int-pro:int}) that operates at the level of $G$-modules, for $G$ a finite group, rather than at the level of functions.

The construction of such a lift of the singular theta lift, in the $O_{2,1}$ setting, is the purpose and product of this work. 
More specifically, 
for $G$ a finite group and $m$ a non-zero integer we formulate a notion of 
rational weakly holomorphic $G$-module of weight $\frac12$ and index $m$, 
and a notion of 
weakly holomorphic $G$-module of weight $0$
(see \S~\ref{sec:res-for}), 
and we establish a general construction 
\begin{gather}\label{eqn:int-mod:SQ}
W\mapsto \SQ(W)
\end{gather} 
(see Theorem \ref{thm:res} and (\ref{eqn:res-for:SQW})) that produces 
a weakly holomorphic $G$-module $\SQ(W)$ 
of weight $0$ from an arbitrary rational weakly holomorphic $G$-module $W$ of weight $\frac12$ with positive index.
The construction (\ref{eqn:int-mod:SQ}) depends upon the 
singular theta lift (\ref{eqn:int-pro:int}) of 
\cite{Harvey:1995fq,Borcherds:1996uda}, and also on $O_{2,1}$-type repackaging, 
and produces infinite product formulae for the McKay--Thompson series associated to the output $G$-module $\SQ(W)$.

The cornerstone of the construction (\ref{eqn:int-mod:SQ}) is a 
twined counterpart to the $O_{2,1}$-type Borcherds product of (\ref{eqn:int-pro:j}) and (\ref{eqn:int-moo:jottauot}). 
In the case of a rational weakly holomorphic $G$-module 
\begin{gather}\label{eqn:int-mod:W}
W=\bigoplus_{r\xmod 2m}\bigoplus_{D\equiv r^2\xmod 4m} W_{r,\frac{D}{4m}}
\end{gather} 
of weight $\frac12$ and positive integer index $m$ (cf.\ (\ref{eqn:res-for:W})), 
it may be expressed in the form
\begin{gather}\label{eqn:int-mod:PsiWg}
\Psi^W_g(\tau) := q^{-H}\exp\left(-
\sum_{n>0} 
\sum_{k>0}
C^W_{g^k}(n^2,n)
\frac{q^{nk}}{k}   \right),
\end{gather}
(cf.\ (\ref{eqn:int-moo:dentws})), for $g\in G$, 
where $H=H^W$ is a generalized class number associated to $W$ (see (\ref{eqn:H})), 
and 
$C^W_g(D,r):=\tr(g|W_{r,\frac{D}{4m}})$.

A priori it is not clear that the prescription (\ref{eqn:int-mod:PsiWg}) makes sense, but we confirm (see \S~\ref{sec:res-CM}) that the right-hand side of (\ref{eqn:int-mod:PsiWg}) converges for $\Im(\tau)$ sufficiently large, and extends by analytic continuation to a holomorphic function on the upper half-plane. 

Note that we obtain a rational weakly holomorphic $G$-module $W^{(-3,1)}$ of weight $\frac12$ and index $1$, 
for $G=\Th$, by setting 
\begin{gather}\label{eqn:int-mod:W31}
W^{(-3,1)}_{r,\frac{D}{4}}:=\breve{W}^{(-3,1)}_{D}
\end{gather} 
(cf.\ (\ref{eqn:int-mod:W})) 
for $\breve{W}^{(-3,1)}$ as in (\ref{eqn:int-moo:breveW31}), where $r$ on the left-hand side of (\ref{eqn:int-mod:W31}) is $0$ or $1$ according as $D$ is even or odd.

We denote our general construction (\ref{eqn:int-mod:SQ}) by $\SQ$ in homage to the functorial construction of Borcherds \cite{Borcherds1992}, that produces the monster Lie algebra $\gt{m}$ from the moonshine module $\vn$, and is sometimes referred to as a second-quantization functor, 
on account of its interpretation in physics. 
We do not employ the formalities of categories and functors in this work, but we do provide an explicit realization of $\SQ(W)$ in terms of the alternating and symmetric powers of homogeneous subspaces of $W$ (see \S~\ref{sec:res-con}), and it follows from this that our construction is functorial at the level of $G$-modules.

\subsection{Twists}\label{sec:int-twi}

We 
return now to the twisted Borcherds products of 
Zagier \cite{ZagierTSM} and Bruinier and Ono \cite{BruinierOno2010} 
that we mentioned in \S~\ref{sec:int-pro}.
Inspired by their results, and by the problem of inverting our construction $\SQ$ (see \S~\ref{sec:out-inv}), we 
suggest the consideration of twined twisted $O_{2,1}$-type Borcherds products for future work. 
In the case that $W$ is a rational weakly holomorphic $G$-module of weight $\frac12$ and index $m$ 
 for $m$ a positive integer
(cf.\ 
(\ref{eqn:int-mod:W}), 
(\ref{eqn:res-for:W})), the construction we propose takes the form
\begin{gather}\label{eqn:int-twi:PsiWDeltag}
\Psi^{W}_{D_1,r_1,g}( \tau) := \exp\left(-
\sum_{n>0} 
\sum_{k>0}
\sum_{a\xmod D_1}
C^W_{g^k}(D_1 n^2,r_1 n)
\left(\frac{D_1}{a}\right) 
\ex\left(\frac{ak}{D_1}\right)\frac{q^{nk}}{k}  \right),
\end{gather}
for 
$D_1>1$ a fundamental discriminant, 
$r_1$ an integer such that $D_1=r_1^2\xmod 4m$, and $g\in G$,
where 
$C^W_{g}(D,r)$ is as in (\ref{eqn:int-mod:PsiWg}), and
$\left(\frac{D_1}{a}\right)$ is the Kronecker symbol (see e.g.\ \cite{GKZ87} for the definition).
The results of \cite{BruinierOno2010} cover the case that $g$ in \eqref{eqn:int-twi:PsiWDeltag} is the identity element,
whereby the definition (\ref{eqn:int-twi:PsiWDeltag}) we propose reduces to 
\begin{gather}\label{eqn:int-twi:PsiWDeltae}
    \Psi^{W}_{D_1,r_1}(\tau) := \prod_{n>0}
    \prod_{a\xmod D_1} 
    \left(1-\ex(\tfrac{a}{D_1})q^n\right)^{\left(\frac{D_1}{a}\right)C^{W}(D_1 n^2,r_1 n)},
\end{gather}
where $\Psi^W_{D_1,r_1}:=\Psi^W_{D_1,r_1,e}$, and $C^W(D,r):=C^W_e(D,r)$ is the dimension of $W_{r,\frac{D}{4m}}$.
In particular, it is shown in op.\ cit.\ that the right hand-side of (\ref{eqn:int-twi:PsiWDeltae}) converges for $\Im(\tau)$ sufficiently large, and extends by analytic continuation to a meromorphic function on $\HH$ with an explicitly determined divisor (cf.\ (\ref{eqn:out-inv:ZWD1r1})). 

The automorphic properties of 
$\Psi^{W}_{D_1,r_1}$ 
are established in op.\ cit.\ via an analysis of twisted Siegel theta functions $\Theta^{(m)}_{D_1,r_1}$ 
based on the lattices $L_m=\sqrt{2m}\ZZ\oplus\Gamma^{1,1}$ (cf.\ (\ref{eqn:pre-lat:LmN})).
Briefly, $\Theta^{(m)}_{D_1,r_1}$ is defined similarly to the theta function we denote $\Theta^{(m)}$ in \S~\ref{sec:pre-mod}, but with the generalized genus character $\chi^{(m)}_{D_1}$ (see (\ref{eqn:pre-hee:chiD0N})) twisting the contribution from each vector of $L_m$.
We expect that the main step towards understanding the automorphic properties of (\ref{eqn:int-twi:PsiWDeltag}) will be a counterpart analysis of similar twisted Siegel theta functions
$\Theta^{(m)}_{D_1,r_1,N}$, 
based on the more general family $L_{m,N}=\sqrt{2m}\ZZ\oplus \Gamma^{1,1}(N)$ 
(see (\ref{eqn:pre-lat:LmN})).

A main motivation for the construction (\ref{eqn:int-twi:PsiWDeltag}) is the fact that 
the untwisted products $\Psi^W_g$ of (\ref{eqn:int-mod:PsiWg}) only ``see'' the (virtual) $G$-modules $W_{r,\frac{D}{4m}}$ for which $D=n^2$ is a perfect square, whereas
the twined twisted Borcherds products $\Psi^W_{D_1,r_1,g}$ of (\ref{eqn:int-twi:PsiWDeltag}) incorporate all the remaining components of $W$.
Thus it is natural to expect that twined twisted Borcherds products
will play an important role in the final comparative analysis of moonshine in weight one-half and weight zero.

In fact,
untwined twisted Borcherds products very similar to the $\Psi^W_{D_1,r_1}$ of (\ref{eqn:int-twi:PsiWDeltae}) have already appeared in such 
analyses.
The first examples of this were given in \cite{ORTL}, wherein twisted Borcherds products were used to relate certain cases of umbral moonshine to conjugacy classes in the monster (see Theorem 1.1 of op.\ cit.). 
Twisted borcherds products were subsequently 
used to constructively relate all cases of umbral moonshine to principal moduli for genus zero groups in 
\cite{Cheng:2016klu} (see Theorem 4.5.4 of op.\ cit.). 
We discuss this more in \S~\ref{sec:out-imp}.

As we also discuss in \S~\ref{sec:out-imp}, 
it develops 
that the untwisted Borcherds product construction, (\ref{eqn:int-mod:PsiWg}), cannot be applied to umbral moonshine. 
Thus, not only are we motivated to develop twined twisted Borcherds products as in (\ref{eqn:int-twi:PsiWDeltag}) for the purpose of better understanding penumbral moonshine, and its potential monstrous aspect, 
but also in order to shine more light on umbral moonshine, 
since the lift of the untwisted Siegel theta lift that we develop in this work does not apply there. 
In particular, we are motivated to ask:
Can the
twisted Siegel theta lift, 
\begin{gather}\label{eqn:int-twi:int}
\int_{\mathcal{F}}^\reg 
\overline{\Theta^{(m)}_{D_1,r_1}(\tau,v^+)}\check{F}(\tau)
\frac{{\rm d}\tau_1{\rm d}\tau_2}{\tau_2},
\end{gather}
that is studied in \cite{BruinierOno2010},
be lifted to the level of $G$-modules too? 
We offer this question as a focus for future work.

\subsection{Overview}

The structure of this paper is as follows. 
In \S~\ref{sec:notation} we give a guide to the specialized notation we use. 
In \S~\ref{sec:preliminaries} we review some preliminary notions, including the Weil representations of the metaplectic double cover of the modular group that are defined by even lattices in \S~\ref{sec:pre-lat},
some relevant families of modular forms in \S~\ref{sec:pre-mod}, and 
the twisted Heegner divisors of \cite{BruinierOno2010} in \S~\ref{sec:pre-hee}.
We 
recall 
Adams operations on modules for finite groups in \S~\ref{sec:pre-ada}.

The new results appear in \S~\ref{sec:res}, wherein we explain and establish our general construction $\SQ$, that utilizes infinite products 
to translate 
from
moonshine in 
weight one-half
to moonshine in weight zero.
We set up for and define the construction $\SQ$ abstractly in 
\S~\ref{sec:res-for} (see Theorem \ref{thm:res}). 
Then, in \S~\ref{sec:res-rpk}, we prove two technical lemmas (Lemmas \ref{repackaged_lem} and \ref{lem:FbnGamma0}). 
The first of these generalizes work of Carnahan \cite{Carnahan2012}, and constitutes the repackaging method we discussed in \S~\ref{sec:int-moo}. 
The second lemma of \S~\ref{sec:res-rpk} is used when we check that this repackaging works in the desired way.
We also present some examples of repackaging in \S~\ref{sec:res-rpk}. 

Our next step is to
recall the singular theta lift from \cite{Borcherds:1996uda}, but specialized to the situation specified in \S\S~\ref{sec:res-for}--\ref{sec:res-rpk}.
We do this in \S~\ref{sec:res-lif}, and thereby prepare for  the arguments of \S~\ref{sec:res-CM}, which prove 
(see Proposition \ref{pro:conmod}) that the infinite products we propose in \S~\ref{sec:res-for} make sense. 
In particular, we establish the convergence and modularity of the products of \S~\ref{sec:res-for} in \S~\ref{sec:res-CM}.
The verification of the validity of the construction $\SQ$ is completed in \S~\ref{sec:res-con}, wherein we offer a representation theoretic interpretation of the infinite products constructed in \S\S~\ref{sec:res-for}--\ref{sec:res-CM}. 

We put our results in perspective in \S~\ref{sec:out}.
Specifically, we 
elucidate the relationship between our construction $\SQ$ and traces of singular moduli in \S~\ref{sec:out-inv}, and 
comment on the import of our work for penumbral and umbral moonshine in \S~\ref{sec:out-imp}.

\section*{Acknowledgements}

We are grateful to 
Nathan Benjamin, Scott Carnahan, Miranda Cheng, Shamit Kachru, Michael Mertens, Ken Ono, Natalie Paquette and Max Zimet for helpful communication and discussions.
B.R.\ acknowledges support from the U.S.\ National Science Foundation (NSF) under grant PHY-1720397. 
J.D.\ acknowledges support from the NSF (DMS-1203162, DMS-1601306), and the Simons Foundation (\#316779, \#708354). 
J.H.\  acknowledges support from the NSF under grant PHY-1520748, and the hospitality of the Aspen Center for Physics supported by the NSF under grant PHY-1607611. 
This research was also supported in part by the NSF under grant PHY-1748958.

\section{Notation}\label{sec:notation}

\begin{footnotesize}

\begin{list}{}{
	\itemsep -1pt
	\labelwidth 23ex
	\leftmargin 13ex	
	}

\item
[$\left(\frac{\cdot}{\cdot}\right)$]
The Kronecker symbol. Cf.\ (\ref{eqn:int-twi:PsiWDeltag}).

\item
[${[A,B,C]}$]
Shorthand for the binary quadratic form $Ax^2+Bxy+Cy^2$.
See \S~\ref{sec:pre-hee}.

\item
[$\Aut(L,\check{F})$]
The subgroup of $\Aut(L)$ composed of automorphisms 
that fix 
$\check{F}$.
Cf.\ (\ref{eqn:CLstarL}).

\item
[$\alpha_Q$]
The CM point in $\HH$ associated to a binary quadratic form $Q$. 
Cf.\ (\ref{eqn:pre-hee:ZmD1r1}).

\item
[$c_0(D)$]
A Fourier coefficient of $f_0$. 
See (\ref{eqn:int-pro:f0}).

\item
[$C$] 
The positive cone in $K\otimes \RR$. See (\ref{eqn:KKstarC}).

\item
[$\check{C}_{i,j}(D,r)$]
A Fourier coefficient of $\check{F}$. 
Cf.\ (\ref{eqn:STL:divcon}) and (\ref{eqn:STL:infpro}).

\item
[$\hat{C}_{(i,j)}(D,r)$]
A Fourier coefficient of $\hat{F}_{(i,j)}$. 
Cf.\ the proof of Theorem \ref{thm:STL}.

\item
[$\check{C}_K(D,r)$]
A Fourier coefficient of $\check{F}_K$. See (\ref{eqn:intThetaLcheckFK}).

\item
[$C^W(D,r)$]
Shorthand for $C^W_e(D,r)$. 
See (\ref{eqn:int-twi:PsiWDeltae}).

\item
[$C^W_g(D,r)$]
A Fourier coefficient of $F^W_{g,r}$. 
See (\ref{eqn:res-for:CWgDr}).

\item
[$\delta$]
An element of $L^*$ for some lattice $L$, or a coset of $L$ in $L^*$.
Cf.\ (\ref{eqn:CLstarL}),
(\ref{eqn:ThetaL}) and 
(\ref{eqn:lambdagammaeg}).

\item
[$\ex(\,\cdot\,)$]
We set $\ex(x):=e^{2\pi i x}$. Cf.\ (\ref{eqn:int-pro:j}).

\item
[$\eta^W_g$]
An eta product defined by the action of $g$ on $W_{0,0}$, for $W$ as in (\ref{eqn:res-for:W}).
See (\ref{eqn:res-for:etaWg}).

\item
[$f_0$]
A certain weakly holomorphic modular form of weight $\frac12$ for $\Gamma_0(4)$.
See (\ref{eqn:int-pro:f0}).

\item
[$f^V_g$]
A McKay--Thompson series for 
a 
virtual graded $G$-module $V$ as in (\ref{eqn:res-for:V}).
See (\ref{eqn:res-for:fVg}).

\item
[$F$]
A certain linear operator in \S~\ref{sec:res-con}.
See (\ref{eqn:res-con:FL0}).

\item
[$\check{F}$]
A weakly holomorphic vector-valued modular form of a certain type. 
See Lemma \ref{repackaged_lem}.

\item
[$\check{F}_K$]
A weakly holomorphic modular form of weight $\frac12$ and type $\varrho_K$. 
See (\ref{eqn:checkFKr}).

\item
[$\check{F}_{(i,j)}$]
The vector-valued function with components $\check{F}_{(i,j),r}:=\check{F}_{i,j,r}$. 
Cf.\ (\ref{eqn:FijrFijr}).

\item
[$\hat{F}_{(i,j)}$]
A function that plays a technical role in \S~\ref{sec:res-rpk}.
See 
Lemma \ref{repackaged_lem}.

\item
[$\check{F}_{i,j,r}$]
A component function of $\check{F}$. 
Cf.\ (\ref{eqn:FijrFijr}).

\item
[$\hat{F}_{(i,j),r}$]
A component function of $\hat{F}_{(i,j)}$. 
See Lemma \ref{repackaged_lem}.

\item
[$F^{(n)}$] 
A weakly holomorphic modular form of weight $\frac12$ and type $\varrho_m\vert_{\widetilde{\Gamma}_1(N/n)}$. 
See Lemma \ref{repackaged_lem}.

\item
[$F^W_g$]
A McKay--Thompson series for a graded $G$-module $W$ as in (\ref{eqn:res-for:W}). 
See (\ref{eqn:res-for:FWgr}).

\item
[$\check{F}^W_g$]
A repackaged McKay--Thompson series. See (\ref{eqn:FWgntocheckFWg}).

\item
[$F^W_{g,r}$]
A component function of $F^W_g$. See (\ref{eqn:res-for:FWgr}).

\item
[$\breve F^{(-3,1)}_g$]
A McKay--Thompson series of penumbral Thompson moonshine. 
See (\ref{eqn:int-moo:breveF31g}).

\item
[$\mathcal{F}$]
A fundamental domain for the action of $\SL_2(\ZZ)$ on $\HH$. 
Cf.\ (\ref{eqn:int-pro:int}), (\ref{eqn:res-lif:stl}).

\item
[$\Phi_L(v^+,\check{F})$]
The singular theta lift of a function $\check{F}$. 
See (\ref{eqn:res-lif:stl}).

\item
[$G(L)$]
The Grassmannian of positive definite subspaces of $L\otimes\RR$. 
See \S~\ref{sec:pre-mod}, before (\ref{eqn:ThetaL}).

\item
[$(\gamma,u)$]
An element of $\widetilde{\SL}_2(\RR)$. 
Cf.\ (\ref{eqn:widetildeSL2R}--\ref{eqn:widetildeSL2R-mlt}).

\item
[$\Gamma^{1,1}$] The unique unimodular even lattice of signature $(1,1)$. 
Cf.\ (\ref{eqn:pre-lat:LmN}).

\item
[$\widetilde{\Gamma}$]
The preimage of $\Gamma<\SL_2(\RR)$ in $\widetilde{\SL}_2(\RR)$ under the natural map 
$\widetilde{\SL}_2(\RR)\to \SL_2(\RR)$. 
See \S~\ref{sec:pre-lat}.

\item
[$\widetilde{\Gamma}_0(N)$]
The preimage of $\Gamma_0(N)$ in $\widetilde{\SL}_2(\ZZ)$ under the natural map $\widetilde{\SL}_2(\ZZ)\to \SL_2(\ZZ)$. 
See \S~\ref{sec:pre-lat}.

\item
[$\widetilde{\Gamma}_1(N)$]
The preimage of $\Gamma_1(N)$ in $\widetilde{\SL}_2(\ZZ)$ under the natural map $\widetilde{\SL}_2(\ZZ)\to \SL_2(\ZZ)$. 
See \S~\ref{sec:pre-lat}.

\item
[$\overline{\Gamma_0(m)}_Q$]
The stabilizer in $\overline{\Gamma_0(m)}:=\Gamma_0(m)/\{\pm \Id\}$ of a binary quadratic form $Q$.
Cf.\ \ref{eqn:pre-hee:ZmD1r1}.

\item
[$\Gamma_0(m)+m$]
The extension of $\Gamma_0(m)$ generated by its Fricke involution.
See (\ref{eqn:out-inv:m+m}).

\item
[$H$]
A shorthand for $H^W$. Cf.\ (\ref{eqn:res-for:PsiWg}) and (\ref{eqn:H}).

\item
[$H^W$]
The generalized class number associated to $W$ as in (\ref{eqn:res-for:W}). 
See (\ref{eqn:H}).

\item
[$H^W_\ell(g)$]
An alternative notation for $H(K,\check{F}_K)$ in a special case. 
See (\ref{eqn:Hellg}).

\item
[$H_{m}(D,r)$]
A generalized class number. Cf.\ (\ref{eqn:intThetaLcheckFK}).

\item
[$H(K,\check{F}_K)$]
A generalized class number. 
See (\ref{eqn:HKCcheckFK}).

\item
[$\mc{H}$]
The tensor product of the $\mc{H}(-n)$.
Cf.\ (\ref{eqn:mathcalFn}).

\item
[$\mc{H}_n$]
A homogeneous subspace of $\mc{H}$.
Cf.\ (\ref{eqn:mathcalFn}).

\item
[$\mc{H}(-n)$]
A certain $G$-module defined in terms of $U_n$.
See (\ref{eqn:mathcalFn}).

\item
[$k^W_g$]
A certain value defined 
for $g\in G$ 
and $W$ as in (\ref{eqn:res-for:W}). 
See (\ref{eqn:kg}) and cf.\ Theorem \ref{thm:STL}.

\item
[$K$]
A lattice defined by a choice of primitive norm zero vector $\ell$ in $L$.
See (\ref{eqn:K}--\ref{eqn:KKstarC}).

\item
[$\Kb$]
The generator for $K$ that is contained in its positive cone $C$.
See (\ref{eqn:KKstarC}).

\item
[$\Kb'$]
The generator for $K^*$ that is contained in its positive cone $C$.
See (\ref{eqn:KKstarC}).

\item
[$\ell$]
A primitive norm zero vector in $L$ 
in \S\S~\ref{sec:res-lif}--\ref{sec:res-CM}.
Cf.\ (\ref{eqn:K}). 

\item
[$\ell'$]
A vector in $L^*$ such that $(\ell,\ell')=1$, for $\ell$ as in \S\S~\ref{sec:res-lif}--\ref{sec:res-CM}. Cf.\ (\ref{eqn:K}).

\item 
[$L$] 
In \S\S~\ref{sec:res-lif}--\ref{sec:res-CM} we use $L$ to denote a specific realization of $L_{m,N}$. 
See (\ref{eqn:pre-lat:LmN2x2}).

\item
[$L_m$] Shorthand for the lattice $L_{m,1}$.
See 
(\ref{eqn:pre-lat:LmN}).

\item
[$L_{m,N}$] Shorthand for the lattice $\sqrt{2m}\ZZ\oplus\Gamma^{1,1}(N)$. 
See 
(\ref{eqn:pre-lat:LmN}).

\item
[$L(N)$] A lattice obtained by rescaling the quadratic form on $L$. 
Cf.\ (\ref{eqn:pre-lat:LmN}).

\item
[$\lambda$]
A vector in $L\otimes\RR$ for some lattice $L$. 
Cf.\ (\ref{eqn:thetaLgamma}).

\item
[$\lambda^\perp$]
A CM point in the upper half-plane.
Cf.\ (\ref{eqn:ZLlambda}) and (\ref{eqn:ZLtaulambda}).

\item
[$\lambda(c,b,a)$]
A vector in the lattice $L=L_{m,N}$ in \S\S~\ref{sec:res-lif}--\ref{sec:res-CM}.
See (\ref{eqn:pre-lat:LmN2x2}).

\item
[$\Lambda_t(U)$]
A generating function for exterior powers of a virtual $G$-module $U\in R(G)$.
See (\ref{eqn:LambdatSt}).

\item
[$m_\chi(U)$]
The multiplicity of $\chi\in \Irr(G)$ in a virtual $G$-module $U$. 
See (\ref{eqn:mchiU}).

\item
[$N_g$]
A positive integer attached to a group element $g\in G$, in the setup of \S~\ref{sec:res}.
Cf.\ (\ref{eqn:res-for:FWgnVwh}).

\item
[$P$]
A certain principal $\CC^*$-bundle over $G(L)$ for $L=L_{m,N}$.
See \S~\ref{sec:res-lif}, after (\ref{eqn:ZL}).

\item
[$\psi^k$]
An Adams operation on virtual $G$-modules, for 
$G$ a finite group. 
Cf.\ (\ref{eqn:LambdatUexp}--\ref{eqn:StUexp}).

\item 
[$\Psi^W_g$]
A twined $O_{2,1}$ Borcherds product. 
See (\ref{eqn:int-mod:PsiWg}), \eqref{eqn:res-for:PsiWg}.

\item
[$\Psi^W_{D_1,r_1}$]
A shorthand for $\Psi^W_{D_1,r_1,e}$. 
See (\ref{eqn:int-twi:PsiWDeltae}).

\item
[$\Psi^W_{D_1,r_1,g}$]
A twined twisted $O_{2,1}$-type Borcherds product.
See (\ref{eqn:int-twi:PsiWDeltag}).

\item
[$\Psi_\ell(\tau,\check{F})$]
A shorthand for $\Psi_\ell(Z_L(\tau),\check{F})$ in the special case that $\ell=\lambda(0,1,0)$.
See (\ref{eqn:PsielltaucheckF}).

\item
[$\Psi_\ell(Z,\check{F})$]
A modular form on $K\otimes \RR+iC$.  
See (\ref{eqn:PsiellZcheckF}).

\item
[$\Psi_L(Z_L,\check{F})$]
An automorphic form produced by a singular theta lift. 
See Theorem \ref{thm:STL}.

\item
[$Q$]
A binary quadratic form with integer coefficients.
Cf.\ (\ref{eqn:pre-hee:ZmD1r1}).

\item[$Q_L$]
The quadratic form 
of an even lattice $L$. 
See \S~\ref{sec:pre-lat}.

\item
[$\mc{Q}^{(m)}_{D,r}$]
A set of binary quadratic forms with integer coefficients. 
See \S~\ref{sec:pre-hee}.

\item
[$R(G)$]
The Grothendieck group of the category of finitely generated $\CC[G]$-modules. 
See (\ref{eqn:RG}).

\item
[$R^+(G)$]
The subset of $R(G)$ represented by finite-dimensional $\CC[G]$-modules.
See (\ref{eqn:RplusG}).

\item
[$\varrho_L$]
The Weil representation 
attached to an even lattice $L$. 
See \eqref{eqn:pre-lat:wei}.

\item
[$\varrho_m$]
A shorthand for $\varrho_{m,1}$. 
Cf.\ \eqref{eqn:pre-lat:weiLmN}.

\item
[$\varrho_{m,N}$]
A shorthand for $\varrho_{L_{m,N}}$. 
See \eqref{eqn:pre-lat:weiLmN}.

\item
[$\rho(K,\check{F}_K)$] 
A shorthand for $\rho(K,C,\check{F}_K)$. 
See (\ref{eqn:rhoKcheckFK}).

\item
[$\rho(K,C,\check{F}_K)$] 
A Weyl vector. Cf.\ (\ref{eqn:rhoKWcheckFK}) and (\ref{eqn:rhoKcheckFK}).

\item 
[$\widetilde{S}$]
A specific element of $\mpt(\ZZ)$.
See (\ref{eqn:tildeStildeT}).

\item
[$\SQ$]
Our lift of the singular theta lift.
See (\ref{eqn:int-mod:SQ}) and (\ref{eqn:res-for:SQW}).

\item
[$S_t(U)$]
A generating function for symmetric powers of a virtual $G$-module $U\in R(G)$.
See (\ref{eqn:LambdatSt}).

\item
[$\mpt(\RR)$]
The metaplectic double cover of $\SL_2(\RR)$. 
See (\ref{eqn:widetildeSL2R}--\ref{eqn:widetildeSL2R-mlt}).

\item
[$\mpt(\ZZ)$]
The preimage of $\SL_2(\ZZ)$ under the natural projection $\mpt(\RR)\to \SL_2(\RR)$. 
Cf.\ (\ref{eqn:tildeStildeT}).

\item 
[$\widetilde{T}$]
A specific element of $\mpt(\ZZ)$.
See (\ref{eqn:tildeStildeT}).

\item
[$T_{g}$]
A McKay--Thompson series of monstrous moonshine. 
See \S~\ref{sec:int-moo}.

\item
[$T^W_g$]
A McKay--Thompson series associated to $\SQ(W)$. 
See (\ref{eqn:res-for:TWg}) and Theorem \ref{thm:res}.

\item
[$T^{(m+m)}$]
The principal modulus associated to $\Gamma_0(m)+m$ when this group is genus zero. 
Cf.\ (\ref{eqn:out-inv:TmtauTmalpha}).

\item
[$\tr(g|U)$]
The trace of $g\in G$ on some $U\in R(G)$, or on a power series $U\in R(G)[[t]]$. 
See \S~\ref{sec:pre-ada}.

\item
[$\theta_m^0$]
The 
vector-valued modular form $\theta_m^0:=(\theta_{m,r}^0)$. 
Cf.\ (\ref{eqn:pre-mod:thetanull}).

\item
[$\theta_{m,r}$]
An certain $2$-variable theta function. 
Cf.\ (\ref{eqn:pre-mod:phi}).

\item
[$\theta^{0}_{m,r}$]
An index $m$ Thetanullwert. 
See (\ref{eqn:pre-mod:thetanull}).

\item
[$\theta_{L+\delta}$]
A component of the Siegel theta function $\Theta_L$.
See (\ref{eqn:thetaLgamma}).

\item
[$\Theta_L$] The Siegel theta function attached to a lattice $L$.
See (\ref{eqn:ThetaL}--\ref{eqn:thetaLgamma}).

\item
[$\Theta^{(m)}$]
A shorthand for $\Theta^{(m)}_1$. 
See \S~\ref{sec:pre-mod}.

\item
[$\Theta^{(m)}_N$]
A shorthand for $\Theta_L$ in case $L=L_{m,N}$. 
See \S~\ref{sec:pre-mod}.

\item
[$\tau_1$]
The real part of $\tau\in \HH$. Cf.\ (\ref{eqn:int-pro:int}), \ref{eqn:int-twi:int}, (\ref{eqn:res-lif:stl}) and (\ref{eqn:rhoKWcheckFK}--\ref{eqn:intThetaLcheckFK}).

\item
[$\tau_2$]
The imaginary part of $\tau\in \HH$. Cf.\ (\ref{eqn:int-pro:int}), \ref{eqn:int-twi:int}, (\ref{eqn:res-lif:stl}) and (\ref{eqn:rhoKWcheckFK}--\ref{eqn:intThetaLcheckFK}).

\item
[$u_d(g)$]
A shorthand for $u_d(g|U)$. 
See the proof of Lemma \ref{lem:vngp}.

\item
[$u_d(g|U)$]
A positive  
integer defined for 
$U\in R(G)$ when $\tr(g|U)$ is a rational integer.
Cf.\ (\ref{eqn:udg}).

\item
[$U_n$]
A certain virtual $G$-module defined in terms of $W$. See (\ref{eqn:Un}).

\item
[$U_\chi$]
The 
class of irreducible $G$-modules that realizes an irreducible character $\chi$ of $G$.
Cf.\ (\ref{eqn:RG}).

\item
[$U^b, U^f$]
We define operations $U\mapsto U^b$ and $U\mapsto U^f$ for $U\in R(G)$ in \S~\ref{sec:pre-ada}.
See (\ref{eqn:UfUb}).

\item
[$U(-n)$]
A copy of $U\in R(G)$, for $n$ a positive integer.
Cf.\ (\ref{eqn:res-con:FL0}).

\item
[$v^+$]
A 
positive definite subspace of $L\otimes\RR$. 
Cf.\ (\ref{eqn:ThetaL}--\ref{eqn:thetaLgamma}).

\item
[$v_b(g)$]
A shorthand for $v_b(g|U)$.
See Lemma \ref{lem:vngp}.

\item
[$v_b(g|U)$]
An 
integer defined for 
$U\in R(G)$ when $\tr(g|U)$ is a rational integer.
See (\ref{eqn:vng}--\ref{eqn:pig}).

\item
[$V^W$]
A realization of the 
virtual graded $G$-module 
$\SQ(W)$.
Cf.\ (\ref{eqn:res-for:SQW}) and (\ref{eqn:res-con:VW}).

\item
[$\textsl{V}^{\wh}_{\frac12,m}(N)$]
A certain space of weakly holomorphic vector-valued modular forms. See \S~\ref{sec:pre-mod}.

\item
[$\mathbb{V}^{\wh}_{\frac12,m}(N)$]
A certain space of weakly holomorphic vector-valued mock modular forms. See \S~\ref{sec:pre-mod}.

\item
[$W$]
A weakly holomorphic $G$-module of weight $\frac12$ and some index. 
See (\ref{eqn:int-mod:W}) and (\ref{eqn:res-for:W}).

\item
[$W_{r,\frac{D}{4m}}$]
A homogeneous component of $W$. See (\ref{eqn:res-for:W}).

\item
[$W^{(-3,1)}$]
A form of the penumbral Thompson moonshine module.
Cf.\ (\ref{eqn:int-mod:W31}).

\item
[$\breve W^{(-3,1)}$]
A form of the penumbral Thompson moonshine module.
See (\ref{eqn:int-moo:breveW31}).

\item
[$\breve W^{(-3,1)}_D$]
A homogeneous component of $\breve W^{(-3,1)}$. 
See (\ref{eqn:int-moo:breveW31}).

\item
[$W^{(-3,1)}_{r,\frac{D}4}$]
A homogeneous component of $W^{(-3,1)}$.
See (\ref{eqn:int-mod:W31}).

\item
[$(X_L,Y_L)$]
An ordered basis for a given $v^+\in G(L)$, for $L=L_{m,N}$.
Cf.\ (\ref{eqn:particularXLYL}--\ref{eqn:ZL}).

\item
[$\chi^{(m)}_{D_1}$]
A generalized genus character. See (\ref{eqn:pre-hee:chiD0N}).

\item
[$Z$]
An element of $K\otimes\RR+iC$.
Cf.\ (\ref{eqn:ZtoZL}--\ref{eqn:PsiellZcheckF}).

\item 
[$Z_L$] 
The image of $Z$ under a particular map from $K\otimes \RR+iC$ to $P$.
See (\ref{eqn:ZtoZL}) and cf.\ (\ref{eqn:ZL}).

\item
[$Z(\tau)$]
The image of $\tau\in \HH$ under a specific isomorphism of $\HH$ with $K\otimes \RR+iC$.
See (\ref{eqn:Ztau}).

\item 
[$Z_L(\tau)$] 
The image of $Z(\tau)$ under the map $Z\mapsto Z_L$. 
See (\ref{eqn:ZLtau}).

\item
[$Z^W_{D_1,r_1}$]
A twisted Heegner divisor associated to $W$ as in (\ref{eqn:res-for:W}). 
See (\ref{eqn:out-inv:ZWD1r1}).

\item
[$Z^{(m)}_{D_1,r_1}(D,r)$]
A twisted Heegner divisor. See (\ref{eqn:pre-hee:ZmD1r1}).

\end{list}

\end{footnotesize}

\section{Preliminaries}\label{sec:preliminaries}

In this section we review some preliminary notions in preparation for the arguments of \S~\ref{sec:res}.
We review the Weil representations of the metaplectic double cover of the modular group that are defined by even lattices in \S~\ref{sec:pre-lat}, 
in \S~\ref{sec:pre-mod} we define families of modular forms which arise from these representations, 
in \S~\ref{sec:pre-hee} we introduce some twisted Heegner divisors, and we 
finally review Adams operations in \S~\ref{sec:pre-ada}.

\subsection{Weil Representations}\label{sec:pre-lat}

Here we review some facts about 
Weil representations of the metaplectic group. 
For this recall that $\SL_2(\RR)$ admits an extension
\begin{gather}\label{eqn:widetildeSL2R}
\widetilde{\SL}_2(\RR) := \left.\left\{ (\gamma,u)\,\right|\, \gamma\in \SL_2(\RR),\, u(\tau)^4{\rm d}(\gamma\tau)={\rm d}\tau     \right\},
\end{gather}
where the $u$ in each pair $(\gamma,u)$ in (\ref{eqn:widetildeSL2R}) is assumed to be a holomorphic function on the complex upper half-plane, $\HH$.
Thus,
using $\sqrt{\cdot}$ 
to denote the branch of the square root function determined by requiring that $\sqrt{e^{2\pi it}}=e^{\pi i t}$ for $-\frac12 < t\leq \frac12$, we
either have $u(\tau)=\sqrt{c\tau+d}$ or $u(\tau)=-\sqrt{c\tau+d}$ when $(c,d)$ is the lower row of $\gamma$.

The multiplication rule in $\widetilde{\SL}_2(\RR)$ is
\begin{gather}\label{eqn:widetildeSL2R-mlt}
(\gamma_1,u_1)(\gamma_2,u_2) = \big(\gamma_1\gamma_2,(u_1\circ\gamma_2)u_2\big),
\end{gather}
and we call $\widetilde{\SL}_2(\RR)$ the metaplectic double cover of $\SL_2(\RR)$. 

We write $\widetilde{\Gamma}_0(N)$ and $\widetilde{\Gamma}_1(N)$ for the preimages of $\Gamma_0(N)$ and $\Gamma_1(N)$
under the natural map $\widetilde{\SL}_2(\RR)\to \SL_2(\RR)$, respectively, where
\begin{gather}
\begin{split}\label{eqn:Gamma0nGamma1n}
\Gamma_0(N) &:= \left.\left\{\left(\begin{matrix} a & b \\ c & d\end{matrix}\right)\in \SL_2(\ZZ) \,\right|\,  c \equiv 0  \xmod N\right\}, \\
\Gamma_1(N) &:= \left.\left\{\left(\begin{matrix} a & b \\ c & d\end{matrix}\right)\in \SL_2(\ZZ) \,\right|\,  a,d\equiv 1 \xmod N, \  c \equiv 0  \xmod N\right\}.
\end{split}
\end{gather}
We also write $\widetilde{\SL}_2(\ZZ)$ for the preimage of the modular group $\SL_2(\ZZ)=\Gamma_0(1)=\Gamma_1(1)$.
This group
$\widetilde{\SL}_2(\ZZ)$ 
is generated by the elements $\widetilde{S}$ and $\widetilde{T}$, where
\begin{gather}\label{eqn:tildeStildeT}
\widetilde{S}:=\left(\left(\begin{matrix}0&-1\\1&0\end{matrix}\right),\sqrt{\tau}\right),\quad
\widetilde{T}:=\left(\left(\begin{matrix}1&1\\0&1\end{matrix}\right),1\right).
\end{gather} 
More generally, given $\Gamma<\SL_2(\RR)$ we write $\widetilde{\Gamma}$ for its preimage in $\widetilde{\SL}_2(\RR)$.

Next recall that a lattice $L$ is a free, finitely generated Abelian group 
equipped with a 
symmetric bilinear form $(\cdot\,,\cdot):L\times L\to\mathbb{R}$. 
We restrict our attention to even lattices (i.e.\ those which have $(\lambda,\lambda)\in 2\ZZ$ for each $\lambda$ in $L$) for which $(\cdot\,,\cdot)$ is nondegenerate, and associate to such a lattice the integer-valued quadratic form $Q_L(\lambda) := \frac{(\lambda,\lambda)}{2}$. 
We denote the dual lattice by $L^\ast := {\hom}(L,\ZZ)$ and refer to $L^\ast/L$ as the discriminant group of $L$, where we have implicitly made use of the inclusion $L\xhookrightarrow{} L^\ast$ induced by the bilinear form. 
The automorphism group of $L$ is denoted $\Aut(L)$; it acts on the discriminant group $L^\ast/L$, and therefore on $\mathbb{C}[L^\ast/L]$-valued functions as well,
where 
\begin{gather}\label{eqn:CLstarL}
\mathbb{C}[L^\ast/L] = \mathrm{Span}_\CC\{e_\delta \mid \delta \in L^\ast/L\}
\end{gather} 
is the group algebra of $L^*/L$. 
For $\check{F}$ a $\CC[L^\ast/L]$-valued function we use $\Aut(L,\check{F})$ to denote the subgroup of $\Aut(L)$ which fixes $\check{F}$.

There is one particular family of lattices that will appear repeatedly in the rest of the paper. 
To define this family we write $\sqrt{2m}\ZZ$ for the lattice of rank 1 whose quadratic form $\ZZ\to\ZZ$ is $k\mapsto mk^2$, and $\Gamma^{1,1}$ for the unique unimodular even lattice of signature $(1,1)$,
taking the quadratic form $\ZZ\times\ZZ\to\ZZ$ to be $(u,v) \mapsto uv$. 
We obtain a new lattice $L(N)$ from any given lattice $L$ by defining it to have the same underlying $\ZZ$-module, but a quadratic form which is scaled with respect to that of $L$, so that $Q_{L(N)}(\lambda) := NQ_L(\lambda)$. 
From these building blocks we define the family of interest by setting
\begin{gather}\label{eqn:pre-lat:LmN}
L_{m,N} := \sqrt{2m}\ZZ\oplus \Gamma^{1,1}(N),
\end{gather}
for arbitrary non-zero integers $m$ and $N$.

To {any} even lattice $L$ we can associate a representation $\varrho_L$ of the metaplectic group $\widetilde{\SL}_2(\ZZ)$ (\ref{eqn:widetildeSL2R}--\ref{eqn:widetildeSL2R-mlt}) called the Weil representation associated to $L$. 
It acts on the group algebra $\CC[L^*/L]$ 
according to the rules
\begin{gather}\label{eqn:pre-lat:wei}
\varrho_L(\widetilde{T})e_\delta = \ex\big(Q_L(\delta)\big)e_\delta, \quad
\varrho_L(\widetilde{S})e_\delta = 
\frac{\ex(\frac18(b^--b^+))}
{\sqrt{|L^\ast/L|}}\sum_{\delta'\in L^\ast/L}\ex\big(-{(\delta,\delta')}\big)e_{\delta'}
\end{gather}
(cf.\ (\ref{eqn:tildeStildeT})),
where $(b^+,b^-)$ is the signature of $L$. For the choice $L=L_{m,N}$ the discriminant group is 
$L_{m,N}^\ast/L_{m,N} \cong \ZZ/N\ZZ\times\ZZ/N\ZZ\times \ZZ/2m\ZZ$ 
and the Weil representation becomes 
\begin{gather}\label{eqn:pre-lat:weiLmN}
\begin{split}
\varrho_{m,N}(\widetilde{T})e_{i,j,r} &= \ex\left(\frac{r^2}{4m}+\frac{ij}{N}\right)e_{i,j,r}, \\  
\varrho_{m,N}(\widetilde{S})e_{i,j,r} &= 
\frac{\ex(-\sgn(m)\frac18)}
{\sqrt{|2mN^2|}}
\sum_{i'\in\ZZ/N\ZZ}\sum_{j'\in\ZZ/N\ZZ}\sum_{r'\in\ZZ/2m\ZZ}\ex\left(-\frac{rr'}{2m}-\frac{(ij'+ji')}{N}\right)e_{i',j',r'}.
\end{split}
\end{gather}
When $N=1$ we make the abbreviations $L_m:=L_{m,1}$ and $\varrho_m := \varrho_{m,1}$. 

We are mostly concerned with the case that $m$ and $N$ are both positive in what follows, so that $L_{m,N}$ has signature $(2,1)$, but we note here that the definitions (\ref{eqn:pre-lat:LmN}--\ref{eqn:pre-lat:weiLmN}) of $L_{m,N}$ and $\varrho_{m,N}$ make sense for arbitrary non-zero integer values of $m$ and $N$. 
The lattices $L_{m,N}$ and $L_{m,-N}$ are isomorphic, so there is no loss of generality in restricting to positive $N$, but $L_{-m,N}$ and $L_{m,N}$ are different. Indeed, $\varrho_{-m,N}$ is the dual of the representation $\varrho_{m,N}$. 
The case that $m$ is negative 
plays an important role 
in the discussion of \S~\ref{sec:out-imp}.

\subsection{Modular Forms}\label{sec:pre-mod}

Here we define some spaces of modular forms.
Let $k^+,k^-\in\frac12\ZZ$, let $\Gamma$ be a subgroup of $\SL_2(\mathbb{R})$, let $V$ be a complex vector space, and let $\varrho:\widetilde{\Gamma}\to\GL(V)$ be a representation. 
Then a real analytic function $f:\mathbb{H}\to V$ is called a modular form of weight $(k^+,k^-)$ and type $\varrho$ for $\widetilde{\Gamma}$ if it transforms according to the equation
\begin{gather}\label{eqn:pre-mod:typevarrho}
f(\gamma\tau) = u(\tau)^{2k^+}u(\bar\tau)^{2k^-}\varrho(\gamma,u)f(\tau), \ \ \ \ \ \ \ \  (\gamma,u)\in\widetilde\Gamma.
\end{gather}
We further call $f$ a weakly holomorphic modular form of weight $k^+$ and type $\varrho$ for $\widetilde{\Gamma}$ if $f$ is holomorphic and $k^-=0$, 
and if for any $(\gamma,u)\in\widetilde{\SL}_2(\ZZ)$ there is a $C>0$ such that 
\begin{gather}\label{eqn:pre-mod:wh}
u(\tau)^{-2k^+}\alpha\left(\varrho(\gamma,u)^{-1}f(\gamma\tau)\right)=O(e^{C\Im(\tau)})
\end{gather} 
as $\Im(\tau)\to\infty$, for any linear functional $\alpha:V\to \mathbb{C}$.  
A holomorphic modular form of weight $k^+$ and type $\varrho$ for $\widetilde{\Gamma}$ satisfies the same conditions but with the exponential growth condition (\ref{eqn:pre-mod:wh}) replaced by boundedness.

If $k^+$ and $k^-$ are both integers then the transformation rule (\ref{eqn:pre-mod:typevarrho}) is independent of the choice of $u$ in $(\gamma,u)\in\widetilde{\Gamma}$, and so we may unambiguously speak of modular forms for $\Gamma<\SL_2(\RR)$ in this case. 
If $\varrho$ is trivial and $\Gamma$ contains $-\Id$ then $k^+$ and $k^-$ must both be even integers in order for non-trivial modular forms of weight $(k^+,k^-)$ and type $\varrho$ for $\Gamma$ to exist. 
By a weakly holomorphic modular form of weight $k^+$ for $\G$ we mean a weakly holomorphic modular form of weight $k^+$ and type $\varrho$ for $\G$, where $\varrho$ is the trivial representation.

In this work we are primarily interested in the case that $\varrho$ is trivial, or the Weil representation associated to an even lattice $L$.
A key example of a modular form for such a representation $\varrho=\varrho_L$ is the Siegel theta function attached to $L$. To define this let
$(b^+,b^-)$ 
be the signature of $L$
and let $G(L)$ be the Grassmannian of positive definite subspaces $v^+$ of $L\otimes \RR$. 
Letting $\lambda_+$ and $\lambda_-$ be the projection of $\lambda\in L\otimes\mathbb{R}$ onto $v^+$ and its (negative definite) orthogonal complement, respectively, we define the Siegel theta function
\begin{gather}\label{eqn:ThetaL}
\Theta_L(\tau,v^+) := \sum_{\delta\in L^\ast/L} e_\delta \theta_{L+\delta}(\tau,v^+),
\end{gather}
with components given by
\begin{gather}\label{eqn:thetaLgamma}
\theta_{L+\delta}(\tau,v^+) := 
\sum_{\lambda\in L+\delta}\ex\left(\tau Q_L(\lambda_+)+\bar\tau Q_L(\lambda_-)\right).
\end{gather}
(In (\ref{eqn:ThetaL}--\ref{eqn:thetaLgamma}) we abuse notation by using $L+\delta$ to denote the coset of $L$ in $L^*$ specified by $\delta\in L^*/L$. No confusion should arise from this.) 
Then from Theorem 4.1 of \cite{Borcherds:1996uda}, for example, 
we have that
\begin{gather}
\Theta_L\left(\gamma\tau,v^+\right) = u(\tau)^{b^+}u(\bar\tau)^{b^-}\varrho_L\left(\gamma,u\right)\Theta_L(\tau,v^+) 
\end{gather}
for each $(\gamma,u)\in \widetilde{\SL}_2(\ZZ)$. 
Thus the construction (\ref{eqn:ThetaL}--\ref{eqn:thetaLgamma}) 
furnishes an example of a modular form of weight $(\frac{b^+}{2},\frac{b^-}{2})$ and type $\varrho_L$ for the metaplectic double cover $\widetilde{\SL}_2(\ZZ)$ of the modular group. 
We also have the feature that $\Theta_L(\tau,v^+)$ is invariant with respect to the action of $\Aut(L)$ on the argument $v^+$.

In this work the Siegel theta functions associated to the lattices $L_{m,N}$ of (\ref{eqn:pre-lat:LmN}) play an important role. 
We write $\Theta^{(m)}_N$ as a shorthand for $\Theta_L$ in case $L=L_{m,N}$, and write $\Theta^{(m)}$ 
as a shorthand for $\Theta^{(m)}_1$.

When $b^-=0$ the Siegel theta function (\ref{eqn:ThetaL}) specializes to a holomorphic modular form of weight $\frac{b^+}{2}$ and type $\varrho_L$. For example, in the case that $L=\sqrt{2m}\ZZ$, the Grassmannian $G(L)$ is a point, $v^+=\sqrt{2m}\ZZ\otimes\RR$, and the 
components 
$\theta_{\sqrt{2m}\ZZ+\frac{r}{\sqrt{2m}}}$
of the Siegel theta function $\Theta_{\sqrt{2m}\ZZ}$ 
reduce to the Thetanullwerte
\begin{gather}\label{eqn:pre-mod:thetanull}
\theta^0_{m,r}(\tau) 
:= \theta_{\sqrt{2m}\ZZ+\frac{r}{\sqrt{2m}}}(\tau)
= \sum_{s\equiv r \xmod 2m}q^{\frac{s^2}{4m}} .  
\end{gather}
Here we follow \cite{Borcherds:1996uda} in dropping the variable $v^+$ from notation when $G(L)$ is a point. 
In the remainder we write $\theta_m^0=(\theta^0_{m,r})$ as a shorthand for $\Theta_{\sqrt{2m}\ZZ}$.

Recall from \S~\ref{sec:pre-lat} that $\varrho_m$ denotes the Weil representation defined by $L_m=L_{m,1}$ (cf.\ (\ref{eqn:pre-lat:weiLmN})). 
We write
$\textsl{V}^{\wh}_{\frac12,m}(N)$
for the space of weakly holomorphic modular forms of weight $\frac12$ and type $\varrho_m$ for $\widetilde{\Gamma}_0(N)$ (see (\ref{eqn:pre-mod:typevarrho}--\ref{eqn:pre-mod:wh})).
Thus, for $m$ positive, an element $F=(F_r)$ of $\textsl{V}^{\wh}_{\frac12,m}(N)$ is a 
vector-valued function whose components $F_r$, indexed by integers $r$ modulo $2m$, admit Fourier expansions of the form 
\begin{gather}\label{eqn:pre-mod:Frtau}
F_r(\tau) = \sum_{{D\equiv r^2\xmod 4m}} C_F(D,r) q^{\frac{D}{4m}},
\end{gather}
with $C_F(D,r)=0$ for $D\ll 0$. On the other hand, if $m$ is negative then $F\in \textsl{V}^{\wh}_{\frac12,m}(N)$ still has components $F_r$ indexed by $r\xmod 2m$, and the Fourier expansion is still of the form (\ref{eqn:pre-mod:Frtau}), but with $m$ negative the condition that $F$ be weakly holomorphic (see (\ref{eqn:pre-mod:wh})) requires that
$C_F(D,r)=0$ for $D\gg 0$.

We mention another simple but significant difference between the cases that $m$ is positive and negative in $\textsl{V}^{\wh}_{\frac12,m}(N)$. Namely, using two applications of the second line of (\ref{eqn:pre-lat:weiLmN}) to compute (\ref{eqn:pre-mod:typevarrho}) for $(k^+,k^-)=(\frac12,0)$ and $\varrho=\varrho_m$ with $(\gamma,u)=(-I,\sqrt{-1})$, we find that for $F=(F_r)$ in $\textsl{V}^\wh_{\frac12,m}(N)$ 
we have 
\begin{gather}\label{eqn:pre-mod:Fminusr}
	F_{-r}=
	\begin{cases}
	F_{r}&\text{ if $m>0$,}\\
	-F_{r}&\text{ if $m<0$,}
	\end{cases}
\end{gather} 
for all $r\xmod 2m$. Thus, for example, $\textsl{V}^\wh_{\frac12,-1}(N)$ vanishes for all $N$.

To help put the $\textsl{V}^\wh_{\frac12,m}(N)$ in perspective we point out that 
for $m$ positive $\textsl{V}^\wh_{\frac12,-m}(N)$ is naturally isomorphic to the space of weakly holomorphic Jacobi forms of weight $1$ and index $m$ for $\Gamma_0(N)$,  
and $\textsl{V}^\wh_{\frac12,m}(N)$ is naturally isomorphic to the complex conjugate of the space of weakly skew-holomorphic Jacobi forms of weight $1$ and index $m$ for $\Gamma_0(N)$. 
Indeed, for $m>0$ we obtain the weakly holomorphic Jacobi form $\phi$ corresponding to $F=(F_r)\in \textsl{V}^\wh_{\frac12,-m}(N)$ by setting
\begin{gather}\label{eqn:pre-mod:phi}
	\phi(\tau,z) := \sum_{r\xmod 2m}F_r(\tau)\theta_{m,r}(\tau,z),
\end{gather}
where 
$\theta_{m,r}(\tau,z):=\sum_{s\equiv r\xmod 2m}
\ex(sz)
q^{\frac{s^2}{4m}}$ 
(cf.\ (\ref{eqn:pre-mod:thetanull})), 
and the weakly skew-holomorphic Jacobi form $\varphi$ corresponding to a given $F=(F_r)\in \textsl{V}^\wh_{\frac12,m}(N)$ is 
\begin{gather}\label{eqn:pre-mod:varrphi}
	\varphi(\tau,z) := \sum_{r\xmod 2m}\overline{F_r(\tau)}\theta_{m,r}(\tau,z).
\end{gather}
We refer to \S~3.1 of \cite{Cheng:2016klu} and \S~3.3 of \cite{pmo} for concise introductory accounts of holomorphic and skew-holomorphic Jacobi forms, formulated with moonshine in mind.

To conclude this section 
we note that everything we have said about 
the spaces $\textsl{V}^\wh_{\frac12,m}(N)$ applies equally well when we replace modularity with mock modularity in the definition. 
In particular, the identity (\ref{eqn:pre-mod:Fminusr}) holds when $F$ belongs to the space $\mathbb{V}^\wh_{\frac12,m}(N)$ of mock modular forms of weight $\frac12$ and type $\varrho_m$ for $\widetilde{\Gamma}_0(N)$, and such an $F$ is naturally identified with a mock Jacobi form of weight $1$ 
for $\Gamma_0(N)$ that is either weakly holomorphic (\ref{eqn:pre-mod:phi}), or weakly skew-holomorphic (\ref{eqn:pre-mod:varrphi}), depending on the sign of $m$.
We refrain from defining mock modular forms and mock Jacobi forms here, and instead refer to \S~3.2 of \cite{Cheng:2016klu}, and references cited therein, for more details.

\subsection{Heegner Divisors}\label{sec:pre-hee}

In this section we recall the notion of twisted Heegner divisor, as formulated in \cite{BruinierOno2010}, specifically in preparation for the discussion in \S~\ref{sec:out-inv}.

Following \cite{GKZ87} we write $[A,B,C]$ as a shorthand for the binary quadratic form $Ax^2+Bxy+Cy^2$. 
The discriminant of $[A,B,C]$ is $D=B^2-4AC$, and
there is a discriminant-preserving right action of $\SL_2(\RR)$ on binary quadratic forms with real coefficients determined by requiring that
\begin{gather}\label{eqn:pre-hee:ABCabcdxy}
	[A,B,C]
	\left(
	\begin{smallmatrix}a&b\\c&d\end{smallmatrix}
	\right)
	(x,y) = [A,B,C](ax+by,cx+dy).
\end{gather}

If $D=B^2-4AC$ is negative then there is a unique point $\alpha\in\HH$ such that 
\begin{gather}\label{eqn:pre-hee:alpha}
[A,B,C](\a,1)=0.
\end{gather} 
The right action (\ref{eqn:pre-hee:ABCabcdxy}) is compatible with the usual left action of $\SL_2(\RR)$ on $\HH$ in that 
if $\a$ is the point in $\HH$ associated to $[A,B,C]$ as in (\ref{eqn:pre-hee:alpha}) and $\gamma\in \SL_2(\RR)$, then 
the point in $\HH$ associated to $[A,B,C]\gamma$ is
$\gamma^{-1}\a$.

Henceforth we restrict to binary quadratic forms 
with integer coefficients and negative discriminant. 
Given integers $m$ and $r$, and a fixed choice of $D<0$, 
let $\mc{Q}^{(m)}_{D,r}$ denote the set of integer-coefficient binary quadratic forms $[A,B,C]$ of discriminant $D$ such that $A>0$ and $A\equiv 0\xmod m$ and $B\equiv r\xmod 2m$. 
Then each set $\mc{Q}^{(m)}_{D,r}$ is stable under the restriction of the action (\ref{eqn:pre-hee:ABCabcdxy}) to $\Gamma_0(m)$, and 
is composed of finitely many orbits for this action. Also, any point $\a\in \HH$ that is associated to a quadratic form $[A,B,C]$ as in (\ref{eqn:pre-hee:alpha}) with $A,B,C\in \ZZ$ is called a CM point. (At negative discriminants there is no loss of generality in restricting to quadratic forms $[A,B,C]$ with $A>0$.)

Now suppose that $D_1$ is a positive fundamental discriminant that is a 
square modulo $4m$. 
Applying Proposition 1 of \S~I.2 of op.\ cit.\ we may define the 
generalized 
genus character 
$\chi^{(m)}_{D_1}$ on 
$\mc{Q}^{(m)}_{DD_1}$,
for any negative discriminant $D$ that is a square modulo $4m$,
by setting
\begin{gather}\label{eqn:pre-hee:chiD0N}
\chi^{(m)}_{D_1}([Am,B,C]) := \left(
\frac{D_1'}{Am'}
\right)
\left(
\frac{D_1''}{Cm''}
\right)
\end{gather}
for $[Am,B,C]\in\mc{Q}^{(m)}_{DD_1}$, 
when there exist discriminants $D_1'$, $D_1''$ and positive integers $m'$, $m''$ 
such that $D_1=D_1'D_1''$ and $m=m'm''$ 
and $\gcd(D_1',Am')=\gcd(D_1'',Cm'')=1$, and by setting 
$\chi^{(m)}_{D_1}(Q) := 0$ 
when no such $D_1'$, $D_1''$, $m'$ and $m''$ 
exist.
According to Proposition 1 in \S~I.2 of \cite{GKZ87} 
this generalized genus character (\ref{eqn:pre-hee:chiD0N}) is invariant under the action of $\Gamma_0(m)$ in the sense that $\chi^{(m)}_{D_1}(Q\gamma)=\chi^{(m)}_{D_1}(Q)$ for $Q=[Am,B,C]$ as in (\ref{eqn:pre-hee:chiD0N}) and $\gamma\in \Gamma_0(m)$, so it descends to a well-defined function on the orbit space $\mc{Q}^{(m)}_{DD_1,r}/\Gamma_0(m)$ for each $r\xmod 2m$.

Next let $X_0(m)$ denote the modular curve 
\begin{gather}\label{eqn:int-hee:X0m}
X_0(m):=\Gamma_0(m)\backslash \HH\cup\QQ\cup\{\infty\}
\end{gather} 
associated to $\Gamma_0(m)$, and for $D_1$ and $D$ as above, define the twisted Heegner divisor $Z^{(m)}_{D_1,r_1}(D,r)$ on $X_0(m)$ by setting
\begin{gather}\label{eqn:pre-hee:ZmD1r1}
	Z^{(m)}_{D_1,r_1}(D,r):=\sum_{Q\in\mc{Q}^{(m)}_{DD_1,rr_1}}
	\chi^{(m)}_{D_1}(Q)
	\frac{\overline{\a_Q}}{|\overline{\Gamma_0(m)}_Q|}
	,
\end{gather}
where in 
(\ref{eqn:pre-hee:ZmD1r1}) we write 
$\overline{\Gamma_0(m)}_Q$ for the stabilizer of $Q$ in $\overline{\Gamma_0(m)}:=\Gamma_0(m)/\{\pm \Id\}$, and 
in each summand
take $\overline{\a_Q}$ to be the image under the natural map $\HH\to X_0(m)$ of the CM point $\a_Q=\a\in \HH$ associated to $Q=[A,B,C]$ as in (\ref{eqn:pre-hee:alpha}).

Note that $Z^{(m)}_{D_1,r_1}(D,r)$ as in (\ref{eqn:pre-hee:ZmD1r1}) is generally not an integral divisor, but will be in the cases of interest to us, whereby $D_1>1$. 
This is because 
if $D_1>1$ is fundamental then we have $|\overline{\Gamma_0(m)}_Q|=1$ for every $Q\in\mc{Q}^{(m)}_{DD_1,rr_1}$.

\subsection{Adams Operations}\label{sec:pre-ada}

In this section we review Adams operations 
in the context of complex representations of a finite group.
To set up for this suppose that $G$ is a finite group, 
write $\CC[G]$ for the complex group algebra of $G$ (cf.\ (\ref{eqn:CLstarL})), 
and let $R(G)$ denote the 
Grothendieck group of the category of finitely generated $\CC[G]$-modules.
Then, writing $\Irr(G)$ for the set of irreducible characters of $G$, and writing $U_\chi$ for the isomorphism class of irreducible modules that realize a given $\chi\in \Irr(G)$, we have that $R(G)$ is naturally identified with the free $\ZZ$-module 
generated by the $U_\chi$, i.e.\ 
\begin{gather}\label{eqn:RG}
	R(G) = \sum_{\chi\in \Irr(G)}\ZZ U_\chi.
\end{gather}
The 
finitely generated (i.e.\ finite-dimensional) $\CC[G]$-modules, regarded modulo isomorphisms, may be identified with the semigroup 
$R^+(G)<R(G)$ 
of non-negative integer combinations of 
the $U_\chi$, 
\begin{gather}\label{eqn:RplusG}
	R^+(G) = \left.\left\{\sum_{\chi\in \Irr(G)} m_\chi U_\chi\in R(G)\,\right|\, m_\chi\geq 0\right\}.
\end{gather} 
Also, the tensor product operation on $\CC[G]$-modules defines a commutative semiring structure on $R^+(G)$, and this induces a commutative ring structure on $R(G)$.
We use $+$ to denote the group operation on $R(G)$, and use $\otimes$ to denote the multiplication.

By a virtual $G$-module we mean an element of $R(G)$. 
By a virtual graded $G$-module we mean an indexed collection $\{V_i\}_{i\in I}$ of virtual modules $V_i\in R(G)$, 
for some indexing set $I$, 
but we write 
\begin{gather}\label{eqn:virtualgraded}
V=\bigoplus_{i\in I}V_i
\end{gather} 
for such a thing, 
rather than $V=\{V_i\}_{i\in I}$.

Given 
a virtual $G$-module $U\in R(G)$ we define the multiplicity $m_\chi(U)$ of $\chi$ in $U$, 
for each $\chi\in \Irr(G)$,
by requiring that
\begin{gather}\label{eqn:mchiU}
	U= \sum_{\chi\in \Irr(G)} m_\chi(U)U_\chi
\end{gather}
(cf.\ (\ref{eqn:RG})). We also define $U^f$ and $U^b$ for $U\in R(G)$ by 
setting
\begin{gather}\label{eqn:UfUb}
	U^f
	:=\sum_{\substack{\chi\in \Irr(G)\\m_\chi(U)>0}}m_\chi(U) U_\chi,
	\quad
	U^b
	:=\sum_{\substack{\chi\in\Irr(G)\\m_\chi(U)<0}}(-m_\chi(U))U_\chi,
\end{gather}
so that $U^f,U^b\in R^+(G)$ and $U=U^f-U^b$.

Now taking $t$ to be an indeterminate, let us write $R(G)[[t]]$ for the ring of power series in $t$ with coefficients in $R(G)$, 
and for $U\in R^+(G)$ define elements $\Lambda_t(U)$ and $S_t(U)$ of $R(G)[[t]]$ by setting
\begin{gather}
\label{eqn:LambdatSt}
	\Lambda_t(U)
	:=\sum_{k\geq 0} \Lambda^k(U)t^k,
	\quad
	S_t(U)
	:=\sum_{k\geq 0 }S^k(U)t^k,
\end{gather}
where $\Lambda^k(U)$ and $S^k(U)$ denote the $k$-th exterior and symmetric powers of $U$, respectively. 
(Of course $\Lambda_t(U)$ actually belongs to the polynomial ring $R(G)[t]$ when $U\in R^+(G)$, but see (\ref{eqn:Lambdat-ext}) below.)
Note that we have
\begin{gather}\label{eqn:LambdatUpUpp}
\Lambda_t(U'+ U'')=\Lambda_t(U')\otimes \Lambda_t(U'')
\end{gather}
in $R(G)[[t]]$ for $U',U''\in R^+(G)$, and similarly with $S_t$ in place of $\Lambda_t$.

We now define the Adams operations on $R(G)$ to be the ring homomorphisms $\psi^k$ of $R(G)$, defined for each positive integer $k$ by requiring that 
\begin{gather}\label{eqn:LambdatUexp}
	\Lambda_{-t}(U) = \exp\left(-\sum_{k>0}\psi^k(U)\frac{t^k}{k}\right)
\end{gather}
for $U\in R^+(G)$
(cf.\ e.g.\ Exercise 9.3 in \cite{serrereps}). 
Note that with this definition we also have
\begin{gather}\label{eqn:StUexp}
	S_t(U) = \exp\left(\sum_{k>0}\psi^k(U)\frac{t^k}{k}\right)
\end{gather}
in $R(G)[[t]]$, for $U\in R^+(G)$ (cf.\ loc.\ cit.), 
and it follows that $\Lambda_{-t}(U)\otimes S_{t}(U)=1$ in $R(G)[[t]]$ for $U\in R^+(G)$. 
Thus it is natural to extend 
$\Lambda_t$, 
and therefore also 
the $\Lambda^k$ (cf.\ (\ref{eqn:LambdatSt})), 
from $R^+(G)$ to $R(G)$ by requiring that
\begin{gather}\label{eqn:Lambdat-ext}
	\Lambda_{-t}(U) = \Lambda_{-t}(U^f)\otimes S_t(U^b)
\end{gather}
when $U=U^f-U^b$ for $U^f$ and $U^b$ as in (\ref{eqn:UfUb}). 
Then we have $\Lambda^k(-U) = (-1)^kS^k(U)$ for $U\in R^+(G)$, and (\ref{eqn:LambdatUpUpp}--\ref{eqn:LambdatUexp}) now hold for all $U,U',U''\in R(G)$. 

We now consider the trace of an element $g\in G$ on $\Lambda_{-t}(U)$ for a virtual $G$-module $U=U^f-U^b$. In light of (\ref{eqn:Lambdat-ext}) we may restrict to the cases that $U=U^f$ and $U=-U^b$.
In the first of these cases, whereby $U=U^f$ belongs to $R^+(G)$, we find, by decomposing (a representative of) $U^f$ into $1$-dimensional representations of the cyclic group generated by $g$, that
\begin{gather}\label{eqn:trgLambdaminustUf}
	\tr(g|\Lambda_{-t}(U))
	= \tr(g|\Lambda_{-t}(U^f)) 
	= \prod_i (1-\xi_it)
	= \exp\left(-\sum_{k>0}\sum_i\xi_i^k\frac{t^k}{k}\right),
\end{gather}
where $\{\xi_i\}$ is the multiset of eigenvalues of $g$ on $U^f$.
By similar considerations we find for $U=-U^b$ that 
\begin{gather}\label{eqn:trgStUb}
	\tr(g|\Lambda_{-t}(U)) 
	= \tr(g|S_{t}(U^b)) 
	= \prod_i (1-\xi_it)^{-1}
	= \exp\left(\sum_{k>0}\sum_i\xi_i^k\frac{t^k}{k}\right),
\end{gather}
where now $\{\xi_i\}$ is the multiset of eigenvalues of $g$ on $U^b$. In particular, we have $\tr(g|\psi^k(U)) = \tr(g^k|U)$ for all $U\in R(G)$.

We can write 
$\tr(g|\Lambda_{-t}(U))$ as a product of terms of the form $1-t^n$ in the case that $\tr(g|U)$ is a rational integer, and this will be useful in \S~\ref{sec:res-con}. 
To explain how this works suppose that $\tr(g|U)\in\ZZ$, and for now let us impose the simplifying assumption that $U=U^f\in R^+(G)$ (cf.\ (\ref{eqn:UfUb})). 
Then, since the cyclotomic fields are all Galois extensions of $\QQ$ it must be that 
all the primitive $d$-th roots of unity appear with the same multiplicity in the multiset $\{\xi_i\}$ of eigenvalues of $g$ on $U$, for each $d>0$. 
Write $u_d(g|U)$ for this multiplicity. 
Then 
we have 
\begin{gather}\label{eqn:udg}
	\tr(g|\Lambda_{-t}(U))
	=
	\prod_{i}(1-\xi_i t)
	= 
	\prod_{d>0}c_d(t)^{u_d(g|U)}
\end{gather}
(cf.\ (\ref{eqn:trgLambdaminustUf})), where 
$c_d(t)$ denotes the $d$-th cyclotomic polynomial,
\begin{gather}\label{eqn:vng:2}
c_d(t):=\prod_{\substack{k\xmod d\\\gcd(k,d)=1}}(1-\ex(-\tfrac{k}{d})t)
\end{gather}
(normalized to have constant term equal to $1$). 

We have $1-t^n=\prod_{d|n}c_d(t)$ by construction (\ref{eqn:vng:2}), so 
we obtain 
\begin{gather}\label{eqn:vng:3}
	c_d(t)=\prod_{b|d}(1-t^b)^{\mu(\frac{d}{b})}
\end{gather}
by M\"obius inversion, where $\mu(n)$ is the M\"obius function. 
Now substituting 
(\ref{eqn:vng:3}) 
into
(\ref{eqn:udg}) 
gives us
\begin{gather}\label{eqn:vng:4}
	\tr(g|\Lambda_{-t}(U))
	= 
	\prod_{d>0}\prod_{b|d}(1-t^b)^{\mu(\frac{d}{b})u_d(g|U)},
\end{gather}
and from this 
we see that there exist integers $v_b(g|U)$ such that 
\begin{gather}\label{eqn:vng}
	\tr(g|\Lambda_{-t}(U))
	= 
	\prod_{b>0}(1-t^b)^{v_b(g|U)},
\end{gather}
as we claimed.
Indeed, rewriting (\ref{eqn:vng:4}) as a product over positive integers $b$, and writing the $d$ in each factor as $d=ab$ for some $a>0$,  we obtain
\begin{gather}\label{eqn:vngsumamuauang}
	v_b(g|U) = \sum_{a>0}\mu(a)u_{ab}(g|U).
\end{gather}

The general case, whereby $U\in R(G)$ is such that $\tr(g|U)\in\ZZ$, is just the same, except that some of the $u_d(g|U)$ may be negative. 

We summarize the above discussion as follows.
\begin{lem}\label{lem:trgLambdaminustU}
Let $g\in G$ and $U\in R(G)$ such that $\tr(g|U)\in \ZZ$. 
Then we have 
\begin{gather}\label{eqn:trgLambdaminustU}
\tr(g|\Lambda_{-t}(U)) 
=
\prod_{b>0} (1-t^b)^{v_b(g|U)}
=
\exp\left(-\sum_{k>0}\tr(g^k|U)\frac{t^k}{k}\right)
\end{gather}
where the $v_b(g|U)$ are as in (\ref{eqn:vng}--\ref{eqn:vngsumamuauang}).
\end{lem}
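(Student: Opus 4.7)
The lemma essentially packages the discussion that immediately precedes it, so my plan is to verify the two equalities in \eqref{eqn:trgLambdaminustU} separately and then combine them.

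For the exponential formula on the right, the strategy is to apply the trace functional to the identity \eqref{eqn:LambdatUexp}, namely
\al
\Lambda_{-t}(U) = \exp\left(-\sum_{k>0}\psi^k(U)\tfrac{t^k}{k}\right),
\eal
which by definition of the Adams operations holds in $R(G)[[t]]$ for any $U\in R(G)$. Since $\tr(g|\psi^k(U))=\tr(g^k|U)$, as noted after \eqref{eqn:trgStUb}, taking $\tr(g|\cdot)$ of both sides immediately yields the exponential expression. This step requires no assumption that $\tr(g|U)\in\ZZ$.

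For the product formula, the main task is to extract integer exponents $v_b(g|U)$ from the eigenvalue decomposition of $g$, using the hypothesis that $\tr(g|U)\in\ZZ$. First I would reduce to the case $U=U^f\in R^+(G)$ using \eqref{eqn:Lambdat-ext}, which expresses $\Lambda_{-t}(U)$ as $\Lambda_{-t}(U^f)\otimes S_t(U^b)$ whenever $U=U^f-U^b$. The negative-exponent case $U=-U^b$ follows from the positive case by the same argument applied to $S_t$ in place of $\Lambda_{-t}$ (inverting a product of $(1-t^b)$-factors). For $U\in R^+(G)$, let $\{\xi_i\}$ be the multiset of eigenvalues of $g$ acting on $U$, so that
\al
\tr(g|\Lambda_{-t}(U))=\prod_i(1-\xi_i t).
\eal
The hypothesis $\tr(g|U)\in\ZZ$, together with the Galois-invariance of this multiset under $\Gal(\QQ(\zeta)/\QQ)$ (applied to each primitive root of unity $\zeta$ of order dividing the order of $g$), forces the primitive $d$-th roots of unity to occur with a common multiplicity $u_d(g|U)$ for each $d>0$. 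Grouping eigenvalues by order gives $\tr(g|\Lambda_{-t}(U))=\prod_{d>0}c_d(t)^{u_d(g|U)}$ as in \eqref{eqn:udg}. Möbius inversion on the identity $1-t^n=\prod_{d|n}c_d(t)$ then produces \eqref{eqn:vng:3}, and substituting into the previous display and collecting factors by the exponent $b$ yields the product formula with $v_b(g|U)=\sum_{a>0}\mu(a)u_{ab}(g|U)$ as in \eqref{eqn:vngsumamuauang}. These $v_b(g|U)$ are manifestly integers.

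The only subtle point is the Galois argument that justifies the uniform multiplicity $u_d(g|U)$ of primitive $d$-th roots of unity. This is really where the integrality hypothesis $\tr(g|U)\in\ZZ$ enters: without it, one only knows that the multiset $\{\xi_i\}$ is preserved by $\Gal(\QQ(\zeta_N)/\QQ(\tr(g|U)))$ where $N$ is the order of $g$. With $\tr(g|U)\in\ZZ$, the multiset is Galois-stable over $\QQ$, which acts transitively on primitive $d$-th roots of unity for each $d$, giving the required equal multiplicities. Once that observation is in place, the rest is bookkeeping with cyclotomic polynomials and Möbius inversion.
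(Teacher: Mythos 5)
Your overall route is the paper's: the exponential expression comes from applying $\tr(g|\cdot)$ to (\ref{eqn:LambdatUexp}) together with $\tr(g|\psi^k(U))=\tr(g^k|U)$, and the product expression comes from grouping eigenvalues into cyclotomic polynomials and then M\"obius-inverting $1-t^n=\prod_{d|n}c_d(t)$, exactly as in (\ref{eqn:udg})--(\ref{eqn:vngsumamuauang}). The one step where you genuinely deviate is also where there is a gap: you reduce the product formula to the two cases $U=U^f$ and $U=-U^b$, but the hypothesis $\tr(g|U)\in\ZZ$ does not descend to the summands of $U=U^f-U^b$. For instance, if $\chi\neq\chi'$ are irreducible characters with $\chi(g)=\chi'(g)=\omega$ a primitive cube root of unity (take $G=\ZZ/3\ZZ\times\ZZ/3\ZZ$ and $g=(1,0)$), then $U=U_\chi-U_{\chi'}$ has $\tr(g|U)=0\in\ZZ$ while $\tr(g|U^f)=\omega\notin\QQ$; the uniform-multiplicity argument therefore cannot be run on $U^f$ and $U^b$ separately, and neither factor of $\Lambda_{-t}(U^f)\otimes S_t(U^b)$ is of the form $\prod_{b>0}(1-t^b)^{v_b}$ (the non-integral factors cancel only after multiplication). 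The paper avoids this by running the cyclotomic argument on the signed eigenvalue multiset of $U$ itself: the general case is ``just the same, except that some of the $u_d(g|U)$ may be negative.'' Your argument is repaired by doing likewise, i.e.\ defining $u_d(g|U)$ as the common signed multiplicity (multiplicity in $U^f$ minus multiplicity in $U^b$) of the primitive $d$-th roots of unity and keeping (\ref{eqn:udg})--(\ref{eqn:vngsumamuauang}) verbatim.

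A smaller caveat, which you share with the paper rather than introduce: Galois-stability of the (signed) eigenvalue multiset is equivalent to rationality of $\tr(g^k|U)$ for \emph{all} $k$, not of $\tr(g|U)$ alone---for $o(g)=6$ the multiset $\{\zeta_6,\zeta_6^4\}$ has trace $0$ but is not Galois-stable---so your parenthetical appeal to $\Gal(\QQ(\zeta_N)/\QQ(\tr(g|U)))$ does not by itself yield the uniform multiplicities. This is harmless in the applications (e.g.\ in \S~\ref{sec:res-con}), where $W$ is rational so that all power traces are integers, but if you want the Galois step airtight you should invoke that stronger hypothesis explicitly.
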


To conclude this section we mention that the Frame shape of $g$ on $U$, 
for $g\in G$ and $U\in R(G)$ such that $\tr(g|U)\in \ZZ$,
is
the formal product
\begin{gather}\label{eqn:pig}
	\pi(g|U):=\prod_{b>0}b^{v_b(g|U)}
\end{gather}
where the $v_b(g|U)$ are as in (\ref{eqn:vng}--\ref{eqn:vngsumamuauang}).

\section{Results}\label{sec:res}

In this section we establish our construction $\SQ$---a lift of the singular theta lift---which uses Borcherds products to produce 
$G$-modules of weight zero 
from $G$-modules of weight one-half.
We formulate this precisely in \S~\ref{sec:res-for}. 
In particular, 
we explain
what we mean by a $G$-module of weight zero or weight one-half, and we specify the particular infinite products that underpin the construction.
The main result of this paper, Theorem \ref{thm:res}, also appears in \S~\ref{sec:res-for}. It confirms that the construction we propose makes sense.

The proof of Theorem \ref{thm:res} requires preparation, which we carry out in \S\S~\ref{sec:res-rpk}--\ref{sec:res-con}. In particular,
the verification that the product formulae we propose define modular forms depends upon a method we call repackaging, which we detail in
\S~\ref{sec:res-rpk}.
It also depends upon  
the singular theta lift of 
\cite{Harvey:1995fq,Borcherds:1996uda}.
We translate 
this result from the formulation in \cite{Borcherds:1996uda} to the setup of \S\S~\ref{sec:res-for}--\ref{sec:res-rpk} in \S~\ref{sec:res-lif}, 
and use it to 
prove that the product formulae of
\S~\ref{sec:res-for} converge and are modular in \S~\ref{sec:res-CM}. 
Then we explain 
how $\SQ$ works at the level of virtual $G$-modules, and use this to complete the proof of Theorem \ref{thm:res},
in
\S~\ref{sec:res-con}.

\subsection{Formulation}\label{sec:res-for}

To formulate our construction $\SQ$ precisely we let $G$ be an arbitrary finite group, 
and suppose that $W$ is a virtual graded $G$-module (see (\ref{eqn:virtualgraded})) with a grading of the form 
\begin{gather}\label{eqn:res-for:W}
	W=\bigoplus_{r\xmod 2m}\bigoplus_{D\equiv r^2\xmod 4m}W_{r,\frac{D}{4m}}
\end{gather}
for some positive integer $m$. 
Define the
McKay--Thompson series 
associated to $W$
to be the vector-valued functions $F^W_g=(F^W_{g,r})$, for $g\in G$, that are obtained by setting
\begin{gather}\label{eqn:res-for:FWgr}
	F^W_{g,r}(\tau):=
	\sum_{D\equiv r^2\xmod 4m} \tr(g|W_{r,\frac{D}{4m}})q^{\frac{D}{4m}}.
\end{gather}
Then we say that $W$ as in (\ref{eqn:res-for:W}) is a {weakly holomorphic (virtual graded) $G$-module of weight $\frac12$ and index $m$} 
if for each $g\in G$ there exists a positive multiple $N_g$ of $o(g)$ such that $\frac{N_g}{o(g)}$ divides $o(g)$ and 
\begin{gather}\label{eqn:res-for:FWgnVwh}
F^W_{g^n}\in V^\wh_{\frac12,m}\left(\tfrac{N_g}{n}\right)
\end{gather}
(see \S~\ref{sec:pre-mod}) for each positive divisor $n$ of $N_g$.
Further, we say that such a $G$-module $W$ is rational if the coefficients 
$C^W_{g}(D,r)$
of the McKay--Thompson series $F^W_g$ of (\ref{eqn:res-for:FWgr}) are rational integers, 
\begin{gather}\label{eqn:res-for:CWgDr}
C^W_{g}(D,r):=\tr(g|W_{r,\frac{D}{4m}})\in\ZZ
\end{gather}
for all $g\in G$ and $D,r\in \ZZ$.

\begin{rmk}\label{rmk:res-for:Ngexp}
We expect $F^W_g$ as in (\ref{eqn:res-for:FWgr}) to transform in a natural way with respect to the action of $\widetilde{\Gamma}_0(o(g))$ for each $g\in G$. 
We allow $N_g$ in (\ref{eqn:res-for:FWgnVwh}) to be a multiple of $o(g)$ in order to allow for the possibility that this transformation is governed by a non-trivial character. 
We assume that any such character has order at most $o(g)$, which is why we require $N_g$ to be a multiple of $o(g)$ that divides $o(g)^2$. 
Thus, in particular, we require $F^W_e$ to belong to $V^\wh_{\frac12,m}$. 
\end{rmk}

\begin{rmk}\label{rmk:res-for:jacmod}
As we have mentioned in \S~\ref{sec:pre-mod}, 
elements of $\textsl{V}^\wh_{\frac12,m}(N)$ 
are in natural correspondence with weakly skew-holomorphic Jacobi forms of weight $1$ and index $m$ for $\Gamma_0(N)$ (see (\ref{eqn:pre-mod:varrphi})), 
and elements of $\textsl{V}^\wh_{\frac12,-m}(N)$ 
are in natural correspondence with weakly holomorphic Jacobi forms of weight $1$ and index $m$ for $\Gamma_0(N)$ (see (\ref{eqn:pre-mod:phi})), 
when $m$ is a positive integer.
Thus we may equivalently refer to a weakly holomorphic (virtual graded) $G$-module $W$ of weight $\frac12$ and index $m$ as in (\ref{eqn:res-for:W}--\ref{eqn:res-for:FWgnVwh}) as weakly skew-holomorphic Jacobi of weight $1$ and index $m$, when $m$ is positive. Also, there is no obstruction to formulating the notion of weakly holomorphic (virtual graded) $G$-module $W$ of weight $\frac12$ and index $-m$, for $m$ positive, just by replacing $m$ with $-m$ in (\ref{eqn:res-for:W}--\ref{eqn:res-for:FWgr}). If working with Jacobi forms we should call such $W$ weakly holomorphic Jacobi of weight $1$ and index $m$.
More generally, we obtain the notion of a weakly holomorphic mock modular (virtual graded) $G$-module of weight $\frac12$ and (non-zero) index $m$, and an equivalent notion in terms of mock Jacobi forms, by replacing $\textsl{V}$ with $\mathbb{V}$ in (\ref{eqn:res-for:FWgnVwh}) (cf.\ \S~\ref{sec:pre-mod}).
\end{rmk}

To describe the results of our construction 
we consider 
a virtual
$\ZZ$-graded $G$-module 
\begin{gather}\label{eqn:res-for:V}
	V=\bigoplus_{n\in\ZZ}V_{n}
\end{gather}
(cf.\ (\ref{eqn:virtualgraded})),
and say that such a virtual graded $G$-module is weakly holomorphic of weight $0$ if there exists a constant $\vh$ such that 
the 
graded trace function 
\begin{gather}\label{eqn:res-for:fVg}
f^V_g(\tau):=
	\sum_{n}
	\tr\left(g|V_{n}\right)q^{n-\vh}
\end{gather} 
is a weakly holomorphic modular form of weight $0$ (with level depending upon $g$), for each $g\in G$.

Note that the $\vh$ in (\ref{eqn:res-for:fVg}) that makes $f^V_g$ modular is unique if it exists (so there is no need to include it in the notation for $V$).
If $V$ is a weakly holomorphic $G$-module of weight $0$ as in (\ref{eqn:res-for:V}--\ref{eqn:res-for:fVg}) we call the associated trace functions $f^V_g$ the McKay--Thompson series of $V$. 

The cornerstone of our construction is the 
twined $O_{2,1}$ Borcherds product 
\begin{gather}\label{eqn:res-for:PsiWg}
\Psi^W_g(\tau) := q^{-H}\exp\left(-\sum_{n>0}\sum_{k>0} C^W_{g^k}(n^2,n)\frac{q^{nk}}{k}   \right),
\end{gather}
defined for $g\in G$ with $C^W_g(D,r)$ as in (\ref{eqn:res-for:CWgDr}), where $H=H^W$ is the generalized class number associated to $W$ (defined below in (\ref{eqn:H})).
A technical problem we resolve in \S~\ref{sec:res-CM} is that of showing that the right-hand side of (\ref{eqn:res-for:PsiWg}) converges for $\Im(\tau)$ sufficiently large, and extends by analytic continuation to a holomorphic function on the upper half-plane. 
(It is 
this analytically continued function that we have in mind when we write $\Psi^W_g$.)

A less technical, but nonetheless important ingredient is the eta product 
\begin{gather}\label{eqn:res-for:etaWg}
	\eta^W_g(\tau) := \prod_{b>0}\eta(b\tau)^{2v_b(g|W_{0,0})},
\end{gather}
defined for $g\in G$ where $v_b(g|W_{0,0})$ is as in (\ref{eqn:vng}), for $W_{0,0}$ as in (\ref{eqn:res-for:W}), and
where $\eta(\tau):=q^{\frac1{24}}\prod_{n>0}(1-q^n)$ is the Dedekind eta function.

Taking the convergence of (\ref{eqn:res-for:PsiWg}) on trust for now, we define
\begin{gather}\label{eqn:res-for:TWg}
	T^W_g:=\frac{\Psi^W_g}{\eta^W_g}
\end{gather}
for $g\in G$, when $W$ is a rational weakly holomorphic $G$-module of weight $\frac12$ and some positive integer index $m$. 
The main result of this paper 
is the following.
\begin{thm}\label{thm:res}
Let $G$ be a finite group, 
and let $W$ be a rational weakly holomorphic $G$-module of weight $\frac12$ and index $m$, for some positive integer $m$.
Then there exists a unique weakly holomorphic 
$G$-module $V=V^W$ of weight $0$ such that 
	$
	f^V_g
	= 
	T^W_g 
	$
for all $g\in G$.
\end{thm}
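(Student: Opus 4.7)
The plan is to construct $V^W$ module-by-module from the data of $W$ via Adams operations, and then to verify that its McKay--Thompson series equal $T^W_g$ by identifying $\Psi^W_g$ with an $O_{2,1}$-type Borcherds product produced by a singular theta lift of a repackaged vector-valued form.

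\textbf{Step 1 (Repackaging).} For each $g\in G$ the McKay--Thompson series $F^W_g=(F^W_{g,r})$ is an element of $\textsl{V}^{\wh}_{\frac12,m}(N_g)$, and the companion family $\{F^W_{g^n}\}$ has well-controlled level. I would invoke Lemma \ref{repackaged_lem} to repackage this family into a single vector-valued form $\check{F}^W_g$ of weight $\frac12$ and type $\varrho_{m,N_g}$ for the full group $\mpt(\ZZ)$. The Fourier coefficients $\check{C}_{i,j}(D,r)$ of $\check{F}^W_g$ will be $\QQ$-linear combinations of values $C^W_{g^k}(D,r)$, and on the $(0,0,r)$ component they will recover $C^W_g(D,r)$ in the desired manner. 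This repackaging is exactly parallel to Carnahan's construction for $O_{2,2}$-type lifts in generalized monstrous moonshine, which is essential for bringing us to the one-variable setting.

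\textbf{Step 2 (Singular theta lift and convergence).} Apply the singular theta lift $\Phi_L(v^+,\check{F}^W_g)$ with $L=L_{m,N_g}$ from \S~\ref{sec:res-lif}. Choose the primitive norm zero vector $\ell$ and the splitting $L=K\oplus \ZZ\ell\oplus \ZZ\ell'$ in the usual way so that the $K$-piece identifies $\HH$ with $K\otimes\RR+iC$ via $Z(\tau)$. The main theorem of \cite{Borcherds:1996uda}, as recalled in Theorem \ref{thm:STL}, then asserts that the lift factors as a Weyl vector exponential times an infinite product whose exponents are the relevant $\check{C}_{i,j}(D,r)$. Specializing to this chosen $\ell$ and exponentiating, $\Psi_\ell(\tau,\check{F}^W_g)$ will take exactly the form of the right-hand side of \eqref{eqn:res-for:PsiWg} up to a correction from the $W_{0,0}$-component---the latter being the source of the eta product $\eta^W_g$. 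The convergence of the product for $\Im(\tau)$ large, and its meromorphic extension to $\HH$, are built into the singular theta lift; this disposes of the ad hoc convergence issue mentioned after \eqref{eqn:res-for:PsiWg}. I will record these verifications as Proposition \ref{pro:conmod}.

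\textbf{Step 3 (Modularity of $T^W_g$).} The automorphic transformation law for $\Psi_L$ asserted in Theorem \ref{thm:STL} translates, on the $O_{2,1}$ side and after our parametrization of $\HH$, into a modular transformation law of weight $0$ for $\Psi^W_g$ under a congruence subgroup whose level depends on $N_g$. The quotient by $\eta^W_g$ removes the zero/pole at the cusp coming from the Weyl vector piece $q^{-H}$ and the contribution of $W_{0,0}$ to the product side (this is exactly why the exponents $v_b(g|W_{0,0})$ appear in \eqref{eqn:res-for:etaWg}), leaving a weakly holomorphic modular form of weight $0$. Thus $T^W_g=\Psi^W_g/\eta^W_g$ is modular of weight $0$ for each $g$.

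\textbf{Step 4 (Module construction).} For each $n>0$ define the virtual $G$-module $U_n:=\bigoplus_r W_{r,\frac{n^2}{4m}}$ restricted to the diagonal coefficient $C^W_g(n^2,n)$, i.e.\ the virtual $G$-module whose character is $g\mapsto C^W_g(n^2,n)$. Now use Adams operations: by Lemma \ref{lem:trgLambdaminustU}, for any $g\in G$,
\begin{gather*}
\tr\!\left(g\,\big|\,\Lambda_{-q^n}(U_n)\right) = \exp\!\left(-\sum_{k>0}\tr(g^k|U_n)\frac{q^{nk}}{k}\right) = \exp\!\left(-\sum_{k>0}C^W_{g^k}(n^2,n)\frac{q^{nk}}{k}\right).
\end{gather*}
Taking the graded tensor product of the $\mc{H}(-n):=\Lambda_{-\bullet}(U_n)$ over all $n>0$, with a shift by $q^{-H}\eta^W_g{}^{-1}$ packaged as a further virtual $G$-module built from $W_{0,0}$ via the $v_b$-exponents of \eqref{eqn:vng}, I obtain a candidate virtual graded $G$-module $V^W$ whose McKay--Thompson series formally coincides with $T^W_g$. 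This is the explicit realization promised in \S~\ref{sec:res-con} and functorializes the construction at the level of $G$-modules.

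\textbf{Step 5 (Uniqueness).} Uniqueness is forced by character theory: the system of traces $\{f^{V}_g(\tau)\}_{g\in G}$ determines the multiplicities $m_\chi(V_n)$ of each irreducible character $\chi\in\Irr(G)$ in each graded component $V_n$ via the standard orthogonality relations, so any two candidate $G$-modules with the same McKay--Thompson series are isomorphic as virtual graded $G$-modules.

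\textbf{Main obstacle.} The technically delicate step is Step 2: after repackaging, one must verify that the singular theta lift of $\check{F}^W_g$, restricted via the chosen norm zero vector $\ell$, recovers \emph{precisely} the exponents $C^W_{g^k}(n^2,n)$ and that the Weyl vector piece matches $q^{-H^W}$ after pulling out the $\eta^W_g$ factor. Keeping track of the signs, normalizations, and the interplay between $(i,j)$-components of $\check{F}^W_g$ and the $K$-lattice expansion of the theta lift is where most of the bookkeeping lives; everything downstream (modularity, module construction, uniqueness) is relatively formal once the identification of $\Psi^W_g$ with $\exp(-\Phi_L/4)$ has been cleanly established.
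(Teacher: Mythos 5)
Your architecture coincides with the paper's (repackage via Lemma \ref{repackaged_lem}, specialize Borcherds' product at the infinite cusp, divide by $\eta^W_g$, realize the module through alternating and symmetric powers, and get uniqueness from graded traces), but there is a genuine gap at your Step 3, where you assert that $T^W_g=\Psi^W_g/\eta^W_g$ has weight $0$. The justification you give --- that the quotient by $\eta^W_g$ ``removes the zero/pole at the cusp coming from the Weyl vector piece $q^{-H}$ and the contribution of $W_{0,0}$ to the product side'' --- is not correct: $W_{0,0}$ contributes nothing to the product in \eqref{eqn:res-for:PsiWg} (only the coefficients $C^W_{g^k}(n^2,n)$ with $n>0$ enter), and the quotient does not in general remove the pole at the cusp (when $W_{0,0}=0$ one has $\eta^W_g=1$ and $T^W_g=\Psi^W_g$, which still has a pole at $\infty$; cf.\ Example \ref{eg:out-inv}, where $T^W_e=j$). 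Likewise your Step 2 claim that the specialized lift agrees with \eqref{eqn:res-for:PsiWg} only ``up to a correction from the $W_{0,0}$-component'' is off: at $\ell=\lambda(0,1,0)$ the Borcherds product is \emph{exactly} the right-hand side of \eqref{eqn:res-for:PsiWg}, once one inverts the discrete Fourier transform to identify $\sum_{j}\check{C}_{0,j}(n^2,n)\ex(jk/N)$ with $\hat{C}_{(0,k)}(n^2,n)=C^W_{g^k}(n^2,n)$, which requires Lemma \ref{lem:FbnGamma0}; no $W_{0,0}$-correction appears there.

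What your proposal never supplies is the identity equating the weight of $\eta^W_g$, namely $\sum_{b>0}v_b(g|W_{0,0})$, with the weight $k^W_g=\frac{1}{N_g}\sum_{n|N_g}\phi(N_g/n)\,\tr(g^n|W_{0,0})$ of $\Psi^W_g$ established in Proposition \ref{pro:conmod} via Theorem \ref{thm:STL}. This is a nontrivial combinatorial statement about how Frame shape exponents behave under power maps $g\mapsto g^p$, and it is exactly what Lemmas \ref{lem:vngp} and \ref{lem:weight} of the paper prove; without it the claim that $T^W_g$ has weight $0$, and hence the theorem itself, is unproved. The remaining steps are essentially the paper's: repackaging, convergence and modularity of $\Psi^W_g$, the explicit module, and uniqueness from graded traces are all fine, with the small caveat in your Step 4 that the $g$-dependent exponents $v_b(g|W_{0,0})$ cannot directly define a $G$-module; one should instead include the uniform factors $\Lambda_{-q^n}(-2W_{0,0})=S_{q^n}(2W_{0,0})$ (the paper's $U_0=-2W_{0,0}$), whose traces reproduce those exponents by Lemma \ref{lem:trgLambdaminustU}, with the residual $q$-power absorbed into the constant $\vh$ as in \eqref{eqn:res-con:c}.
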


For $W$ a rational weakly holomorphic $G$-module of weight $\frac12$ and some positive integer index
we define
$\SQ(W)$ to be the weakly holomorphic $G$-module $V^W$ of weight $0$ whose existence and uniqueness is asserted by Theorem \ref{thm:res},
\begin{gather}\label{eqn:res-for:SQW}
	\SQ(W) := V^W.
\end{gather}

The proof of Theorem \ref{thm:res} requires some preparation, which we carry out in the rest of \S~\ref{sec:res}. The proof itself 
is completed in \S~\ref{sec:res-con}. 
Although we do not pursue the formal language of functors in this work, we explain how to realize $\SQ(W)$ explicitly in terms of tensor products of alternating and symmetric powers of subspaces of $W$ in \S~\ref{sec:res-con}. 
From this we see that the construction $\SQ$ is functorial at the level of vector spaces, and in particular, maps $G$-modules to $G$-modules.

\begin{rmk}\label{rmk:res-for:nonegm}
According to Theorem \ref{thm:res}, the construction $\SQ$ of (\ref{eqn:res-for:SQW}) applies to any virtual graded $G$-module $W$ that is rational and weakly holomorphic of weight $\frac12$ and index $m$ in the sense of (\ref{eqn:res-for:W}--\ref{eqn:res-for:CWgDr}), for $m$ a positive integer.
It does not apply to weakly holomorphic modular or mock modular $G$-modules of weight $\frac12$ and negative index, 
as formulated in Remark \ref{rmk:res-for:jacmod}, because the representation $\varrho_m$ is of the wrong type when $m$ is negative (but see \S~\ref{sec:out-imp}). 
It also does not apply to weakly holomorphic mock modular $G$-modules of weight $\frac12$ and positive index, in general. 
In principle we could establish such an extension by more closely following the analysis of
\cite{BruinierOno2010},
but according to Remark 8 of op.\ cit.\ it is not expected that there are any rational weakly holomorphic mock modular $G$-modules of weight $\frac12$ and positive index that are not already modular.
\end{rmk}

\subsection{Repackaging}\label{sec:res-rpk}

In this section we show that the McKay--Thompson series $F^W_{g^k}$, whose coefficients appear in (\ref{eqn:res-for:PsiWg}), can be ``repackaged'' into a single modular form $\check{F}^W_g$ which transforms under the Weil representation $\varrho_{m,N}$ attached to the lattice $L_{m,N} = \sqrt{2m}\ZZ\oplus\Gamma^{1,1}(N)$. 
It is this modular form whose singular theta lift will ultimately yield the desired Borcherds product (\ref{eqn:res-for:PsiWg}) that defines $\Psi^W_g$. Actually, we will provide a  more general construction, which is analogous to, and heavily inspired by, Lemma 3.6 of \cite{Carnahan2012}.
\begin{lem}\label{repackaged_lem}
Fix positive integers $m$ and  $N$. The following spaces are isomorphic:
\begin{enumerate}
\item Families $\{F^{(n)}\}_{n\vert N}$ of weakly holomorphic vector-valued modular forms where each $F^{(n)}$ 
is of weight $\frac12$ and type $\varrho_m$ for 
$\widetilde{\Gamma}_1(N/n)$. \label{itm:Fbn}
\item Families 
$\{\hat{F}_{(i,j)}\}_{i,j\in\ZZ/N\ZZ}$ 
of $\CC[L_m^*/L_m]$-valued functions 
on $\HH$ 
that obey the transformation property  \label{itm:hatF}
$\hat{F}_{(i,j)}(\gamma\tau) = u(\tau)\varrho_m({\gamma},u)\hat{F}_{(i,j)\gamma}(\tau)$ for $(\gamma,u)\in\widetilde{\SL}_2(\ZZ)$. 
\item Weakly holomorphic vector-valued modular forms $\check{F}$ 
of weight $\frac12$ and type $\varrho_{m,N}$ for $\widetilde{\SL}_2(\ZZ)$.\label{itm:checkF}
\end{enumerate}
\end{lem}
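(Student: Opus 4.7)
The plan is to prove (iii) $\Leftrightarrow$ (ii) via a discrete Fourier transform, and then (ii) $\Leftrightarrow$ (i) via an orbit decomposition, in the spirit of Lemma 3.6 of \cite{Carnahan2012}. The guiding observation is that under the identification $L_{m,N}^\ast/L_{m,N} \cong (\ZZ/N\ZZ)^2 \times \ZZ/2m\ZZ$, the formulas \eqref{eqn:pre-lat:weiLmN} express $\varrho_{m,N}$ as the representation $\varrho_m$ acting on the $r$-index tensored with an auxiliary representation acting on the $(i,j)$-indices which captures the level-$N$ structure and factors through $\SL_2(\ZZ/N\ZZ)$.

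For (iii) $\Leftrightarrow$ (ii), given $\check{F}$ with components $\check{F}_{i,j,r}$, I would set
\begin{gather*}
\hat{F}_{(i,j),r}(\tau) := \frac{1}{\sqrt{N}} \sum_{k\in\ZZ/N\ZZ} \ex\bigl(\tfrac{jk}{N}\bigr)\,\check{F}_{i,k,r}(\tau),
\end{gather*}
and check directly on the generators $\widetilde{T}$ and $\widetilde{S}$ of $\widetilde{\SL}_2(\ZZ)$, using \eqref{eqn:pre-lat:weiLmN} and orthogonality of additive characters of $\ZZ/N\ZZ$, that this Fourier transform intertwines the $\varrho_{m,N}$-equivariance of $\check{F}$ with the transformation in (ii). The action of $\gamma$ on the row vector $(i,j)$ works out to be right multiplication, so that $(i,j)\widetilde{T} = (i, i+j)$ and $(i,j)\widetilde{S} = (j, -i)$. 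Fourier inversion in the $j$-variable inverts the construction.

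For (ii) $\Leftrightarrow$ (i), I would decompose $(\ZZ/N\ZZ)^2$ into orbits under the right action of $\SL_2(\ZZ)$. The orbits are parameterized by divisors $n$ of $N$, with representative $(0, n)$ and stabilizer precisely $\Gamma_1(N/n)$; this follows by solving $(0,n)\gamma \equiv (0,n) \xmod N$ together with $\det\gamma = 1$. Thus a family $\{\hat{F}_{(i,j)}\}$ obeying (ii) is uniquely determined by the restrictions $F^{(n)} := \hat{F}_{(0,n)}$ to orbit representatives, each of which is forced to be a weight-$\tfrac12$ weakly holomorphic modular form of type $\varrho_m$ for $\widetilde{\Gamma}_1(N/n)$. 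Conversely, any family $\{F^{(n)}\}_{n|N}$ extends to a consistent $\{\hat{F}_{(i,j)}\}$ via the transformation law, with well-definedness ensured by the modularity of each $F^{(n)}$ under its stabilizer.

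The main technical obstacle will be the bookkeeping: one must verify that the $\ex$-factors on both sides of \eqref{eqn:pre-lat:weiLmN} match correctly after the Fourier transform, including the $\ex(-\tfrac18)$ factor from the $\widetilde{S}$-generator, and that the right action $(i,j) \mapsto (i,j)\gamma$ is identified consistently on generators. The preservation of the weakly holomorphic condition transfers without further difficulty since \eqref{eqn:pre-mod:wh} already controls behavior at all cusps simultaneously in all three formulations.
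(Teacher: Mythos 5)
Your proposal is correct and follows essentially the same route as the paper: the equivalence of (1) and (2) via the orbit decomposition of $(\ZZ/N\ZZ)^2$ under the right $\SL_2(\ZZ)$-action (orbits indexed by $n\mid N$ with representative $(0,n)$ and stabilizer $\Gamma_1(N/n)$, well-definedness coming from $\widetilde{\Gamma}_1(N/n)$-invariance of $F^{(n)}$), and the equivalence of (2) and (3) via a discrete Fourier transform in the second index checked on the generators $\widetilde{S}$ and $\widetilde{T}$. Apart from an immaterial normalization ($\tfrac{1}{\sqrt{N}}$ versus the paper's convention) and the direction in which the transform is written, this is the paper's argument.
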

In Item \ref{itm:hatF} of the statement of Lemma \ref{repackaged_lem} we write $(i,j)\gamma$ for the usual right action of matrices on row vectors, but reduce the computation modulo $N$ since $(i,j)$ belongs to $\ZZ/N\ZZ\times\ZZ/N\ZZ$.

\begin{proof}[Proof of Lemma \ref{repackaged_lem}]
For the course of this proof it will be convenient to use the slash operators $|_m$ and $|_{m,N}$ defined by setting
\begin{gather}
	f|_m(\gamma,u)(\tau) := u(\tau)^{-1}\varrho_m(\gamma,u)^{-1}f(\gamma\tau),
	\label{eqn:fslashm}\\
	\check{F}|_{m,N}(\gamma,u)(\tau):=u(\tau)^{-1}\varrho_{m,N}(\gamma,u)^{-1}\check{F}(\gamma\tau),
	\label{eqn:FslashmN}
\end{gather}
for $(\gamma,u)\in \widetilde{\SL}_2(\ZZ)$, when $f=(f_r)$ is a vector-valued function on $\HH$ with components indexed by $\ZZ/2m\ZZ$, and $\check{F}=(\check{F}_{i,j,r})$ is a vector-valued function on $\HH$ with components indexed by $\ZZ/N\ZZ\times\ZZ/N\ZZ\times\ZZ/2m\ZZ$, as in Item \ref{itm:checkF} above. 
Also, it will be convenient to use $\check{F}_{(i,j)}=(\check{F}_{(i,j),r})$ to denote the vector-valued function with components 
\begin{gather}\label{eqn:FijrFijr}
\check{F}_{(i,j),r}:=\check{F}_{i,j,r}
\end{gather} 
indexed by $\ZZ/2m\ZZ$, when $\check{F}=(\check{F}_{i,j,r})$ is as in (\ref{eqn:FslashmN}). As an application of these conventions (\ref{eqn:fslashm}--\ref{eqn:FijrFijr}) we note that
\begin{gather}
	\begin{split}\label{eqn:FmNijFijm}
	\left.\Big(\check{F}_{}\,\right\vert_{m,N}\widetilde{T} \Big)_{(i,j)}
	&=\left.\ex\left(-\frac{ij}{N}\right)
	\Big(\check{F}_{(i,j)}\right|_m\widetilde{T}\Big),\\
	\left.\Big(\check{F}\right|_{m,N}\widetilde{S}\Big)_{(i,j)} 
	&=\frac1N\sum_{i',j'\xmod N}
	\left.\ex\left(\frac{ij'+ji'}{N}\right)
	\Big(\check{F}_{(i',j')}\right|_m\widetilde{S}\Big),
	\end{split}
\end{gather}
for $i,j\in \ZZ/N\ZZ$ (cf.\ (\ref{eqn:pre-lat:weiLmN})), when $\check{F}=(\check{F}_{i,j,r})$ is as in (\ref{eqn:FslashmN}).

{(1) $\Leftrightarrow$ (2).} 
To start we observe that
the orbits of $\SL_2(\ZZ)$ on $\ZZ/N\ZZ\times\ZZ/N\ZZ$, acting on the right, are in one-to-one correspondence with the divisors of $N$, the bijection being given by $n \mapsto (0,n) \SL_2(\ZZ)$. 
More specifically, $(i,j)$ belongs to the orbit of $(0,n)$ where $n = \gcd(i,j,N)$. To see this, note that there is a matrix of the form
\begin{gather}\label{A_matrix}
\overline{\gamma_0} = \left(\begin{matrix} \ast & \ast \\ i/n & j/n\end{matrix}\right) \in \SL_2(\ZZ/n'\ZZ),
\end{gather}
where $n'=N/n$,
and that any matrix $\gamma_0\in\SL_2(\ZZ)$ which maps to this $\overline{\gamma_0}$ in (\ref{A_matrix}) under the natural map $\SL_2(\ZZ)\to\SL_2(\ZZ/n'\ZZ)$ will relate $(0,n)$ to $(i,j)$ via its action. The ambiguity in this choice is given precisely by left translations by $\Gamma_1(n')$.

We now attach functions $\hat{F}_{(i,j)}=(\hat{F}_{(i,j),r})$ for $(i,j)$ in $\ZZ/N\ZZ\times \ZZ/N\ZZ$ to the family $\{F^{(n)}\}_{n|N}$ by defining
\begin{gather}\label{hatF definition}
\hat{F}_{(i,j)} := 
\Big(\left.F^{(\gcd(i,j,N))}\right|_m(\gamma_0,u_0)\Big)
\end{gather}
(cf.\ (\ref{eqn:fslashm})), where for each $(i,j)\in\ZZ/N\ZZ\times\ZZ/N\ZZ$ we take $(\gamma_0,u_0)$ to be 
an arbitrary preimage in $\widetilde{\SL}_2(\ZZ)$ of a matrix $\overline{\gamma_0}$ as in (\ref{A_matrix}). 

We require to show that the functions $\hat{F}_{(i,j)}$ satisfy the transformation property 
\begin{gather}\label{hatF_transformation}
\Big(\left.\hat{F}_{(i,j)}\right|_m(\gamma,u)\Big) = \hat{F}_{(i,j)\gamma}
\end{gather}
for $(\gamma,u)\in \widetilde{\SL}_2(\ZZ)$. 
So let $(i,j)\in\ZZ/N\ZZ\times\ZZ/N\ZZ$, 
let $(\gamma,u)\in\widetilde{\SL}_2(\ZZ)$,
and 
set $n=\gcd(i,j,N)$.
Also let $(\gamma_0,u_0)\in \widetilde{\SL}_2(\ZZ)$ be as in (\ref{hatF definition}). Then by (\ref{hatF definition}) we have
\begin{gather}
	\Big(\left.\hat{F}_{(i,j)}\right|_m(\gamma,u)\Big) 
	= \Big(\left.F^{(n)}\right|_m(\gamma_0',u_0')\Big)
\end{gather}
where $(\gamma_0',u_0')=(\gamma_0,u_0)(\gamma,u)$. Next let $(\gamma_0'',u_0'')\in\widetilde{\SL}_2(\ZZ)$ be as in (\ref{hatF definition}), but with $(i,j)$ replaced by $(i,j)\gamma$. 
Then the bottom rows of $\gamma_0'$ and $\gamma_0''$ agree modulo $N/n$, so there is some $(\gamma',u')\in \widetilde{\Gamma}_1(N/n)$ such that $(\gamma_0'',u_0'')=(\gamma',u')(\gamma_0',u_0')$. 
We have that $F^{(n)}$ is invariant under the action of $\widetilde{\Gamma}_1(N/n)$ defined by the slash operator $|_m$ of (\ref{eqn:fslashm}) by hypothesis, so
\begin{gather}
\begin{split}
	\hat{F}_{(i,j)\gamma} 
	= \Big(\left.F^{(n)}\right|_m(\gamma_0'',u_0'')\Big) 
	= \Big(\left.F^{(n)}\right|_m(\gamma_0',u_0')\Big) 
	= \Big(\left.\hat{F}_{(i,j)}\right|_m(\gamma,u)\Big)
\end{split}
\end{gather}
as required.

The inverse of the construction we have just validated is
\begin{gather}
\left\{\hat{F}_{(i,j)}\right\}
\mapsto \left\{F^{(n)}=\hat{F}_{(0,n)}   \right \}_{n\vert N},
\end{gather}
so the spaces of Items \ref{itm:Fbn} and \ref{itm:hatF} are isomorphic. 

{(2) $\Leftrightarrow$ (3).} 
The functions $\check{F}=(\check{F}_{i,j,r})$ we are after are obtained by taking the discrete Fourier transforms of the $\hat{F}_{(i,j)}=(\hat{F}_{(i,j),r})$ in their second index. 
Precisely, we take
\begin{gather}\label{dfn_hatF}
\check{F}_{i,j,r} := \frac{1}{N}\sum_{j'\xmod N }\ex\left(-\frac{jj'}{N}\right)\hat{F}_{(i,j'),r}
\end{gather}
for $i,j\in\ZZ/N\ZZ$ and $r\in \ZZ/2m\ZZ$, and we note that the inverse transform is
\begin{gather}\label{eqn:inversetransform}
\hat{F}_{(i,j),r} = \sum_{j'\xmod N }\ex\left(\frac{jj'}{N}\right)\check{F}_{i,j',r}.
\end{gather}

We require to show that $(\check{F}|_{m,N}(\gamma,u))=\check{F}$ for $(\gamma,u)\in \widetilde{\SL}_2(\ZZ)$. 
It suffices to check that this is true on the generators $\widetilde{S}$ and $\widetilde{T}$ of $\widetilde{\SL}_2(\ZZ)$ (see (\ref{eqn:tildeStildeT})). 
Employing
the notation (\ref{eqn:fslashm}--\ref{eqn:FijrFijr}), the
identities (\ref{eqn:FmNijFijm}) and (\ref{hatF_transformation}),
and the transforms (\ref{dfn_hatF}--\ref{eqn:inversetransform}),
we have
\begin{gather}
\begin{split}\label{eqn:checkFslashmNT}
	\left.\left(\check{F}\right|_{m,N}\widetilde{T}\right)_{(i,j)} 
	&=\ex\left(-\frac{ij}{N}\right)
	\left.\Big(\check{F}_{(i,j)}\right|_m\widetilde{T}\Big)\\
	&=\ex\left(-\frac{ij}{N}\right)\frac1N\sum_{j'\xmod N}\ex\left(-\frac{jj'}N\right)
	\left.\Big(\hat{F}_{(i,j')}\right|_m\widetilde{T}\Big)\\
	&=\ex\left(-\frac{ij}{N}\right)\frac1N\sum_{j'\xmod N}\ex\left(-\frac{jj'}N\right)
	\hat{F}_{(i,i+j')},
\end{split}\\
\begin{split}\label{eqn:checkFslashmNS}
	\left.\left(\check{F}\right|_{m,N}\widetilde{S}\right)_{(i,j)} 
	&=\frac1N\sum_{i',j'\xmod N}
	\ex\left(\frac{ii'+jj'}{N}\right)
	\left.\Big(\check{F}_{(j',i')}\right|_m\widetilde{S}\Big)\\
	&=\frac1N\sum_{j'\xmod N}
	\ex\left(\frac{jj'}{N}\right)
	\left.\Big(\hat{F}_{(j',i)}\right|_m\widetilde{S}\Big)\\
	&=\frac1N\sum_{j'\xmod N}
	\ex\left(\frac{jj'}{N}\right)\hat{F}_{(i,-j')}.
\end{split}
\end{gather}
Now replacing $j'$ with $j'-i$ in the last line of (\ref{eqn:checkFslashmNT}), and replacing $j'$ with $-j'$ in the last line of (\ref{eqn:checkFslashmNS}), we find that
\begin{gather}
	\left.\left(\check{F}\right|_{m,N}\widetilde{T}\right)_{(i,j)} 
=
	\left.\left(\check{F}\right|_{m,N}\widetilde{S}\right)_{(i,j)} 
=
\frac1N\sum_{j'\xmod N}\ex\left(-\frac{jj'}N\right)\hat{F}_{(i,j')}
=\check{F}_{(i,j)}
\end{gather}
for $i,j\in\ZZ/N\ZZ$. 
Thus $(\check{F}|_{m,N}\widetilde{T})=(\check{F}|_{m,N}\widetilde{S})=\check{F}$ as we claimed.

The statement that the spaces of Items \ref{itm:hatF} and \ref{itm:checkF} are in bijection now follows from standard facts about discrete Fourier transforms. This concludes the proof.
\end{proof}

Composing the transformations detailed in the proof of Lemma \ref{repackaged_lem} we obtain an explicit isomorphism
\begin{gather}\label{eqn:FbntocheckF}
\left\{F^{(n)}\right\}_{n\vert N} \mapsto \check{F}=\left(\check{F}_{i,j,r}\right)
\end{gather} 
from collections of weakly holomorphic modular forms of weight $\frac12$ and type $\varrho_m$ for groups $\widetilde{\Gamma}_1(N/n)$, to weakly holomorphic modular forms of weight $\frac12$ and type $\varrho_{m,N}$ for $\widetilde{\SL}_2(\ZZ)$. 
Therefore, given a weakly holomorphic $G$-module $W$ of weight $\frac12$ and some index as in (\ref{eqn:res-for:W}), 
we can apply this map (\ref{eqn:FbntocheckF}) to
$\{F^{(n)}=F^W_{g^n}\}_{n|N}$
(cf.\ (\ref{eqn:res-for:FWgr})),
and thereby
encode the McKay--Thompson series $F^W_{g^n}$ associated with powers of a given group element $g$ in a modular form $\check{F}^W_g$ transforming under $\varrho_{m,N}$, where $N=N_g$ is 
as in (\ref{eqn:res-for:FWgnVwh}).
In this situation we write
\begin{gather}\label{eqn:FWgntocheckFWg}
\left\{F^W_{g^n}\right\}_{n\vert N_g} \mapsto \check{F}^W_g=\left(\check{F}^W_{g,i,j,r}\right)
\end{gather} 
for the map (\ref{eqn:FbntocheckF}).
Extending the notational convention (\ref{eqn:FijrFijr}), given $i,j\in \ZZ/N\ZZ$ we write $\check{F}^W_{g,(i,j)}$ for the vector-valued function that takes the $\check{F}^W_{g,i,j,r}$ for $r\xmod 2m$ as its components, so that
\begin{gather}\label{eqn:checkFWgij}
	\check{F}^W_{g,(i,j)} 
	= \left( \check{F}^W_{g,(i,j),r}\right)
	= \left( \check{F}^W_{g,i,j,r}\right).
\end{gather}

\begin{eg}\label{eg:rpk:1}
Here we give an example of a repackaging of a family of modular forms with $N=2$ that is of relevance to the analysis of \cite{pmt}.
To begin we define a weakly holomorphic vector-valued modular form 
$F^{(1)}=(F^{(1)}_0,F^{(1)}_1)$ 
for the restriction to $\widetilde{\Gamma}_0(2)$ of the Weil representation $\varrho_1=\varrho_{1,1}$ (cf.\ (\ref{eqn:pre-lat:weiLmN})) by setting
\begin{gather}\label{2aclass}
\begin{split}
F^{(1)}_0(\tau) &= 128\frac{\eta(\tau)^6\eta(4\tau)^{14}}{\eta(2\tau)^{19}}
=128q-768q^2+3584q^3 -13312q^4+43008q^5+\dots ,\\
F^{(1)}_1(\tau) &= \frac{\eta(2\tau)^{23}}{\eta(\tau)^8\eta(4\tau)^{14}}
=q^{-\frac{3}{4}}+8q^{\frac{1}{4}}+21q^{\frac{5}{4}}+8q^{\frac{9}{4}}-42q^{\frac{13}{4}}+155q^{\frac{21}4}+\dots,
\end{split}
\end{gather}
where $\eta$ is as in (\ref{eqn:res-for:etaWg}).
We also let
$F^{(2)}=(F^{(2)}_0,F^{(2)}_1)$ 
be the unique weakly holomorphic modular form of weight $\frac12$ for the 
representation $\varrho_1$ for $\widetilde{\Gamma}_0(1)=\widetilde{\SL}_2(\ZZ)$ that satisfies $F^{(2)}_r(\tau)=\delta_{r,1}q^{-\frac34}+O(q^{\frac14})$ as $\Im(\tau)\to\infty$, so that 
\begin{gather}\label{zagierf3}
\begin{split}
F^{(2)}_0(\tau) &= 26752q+1707264q^2+44330496q^3+708938752q^4 + \dots, \\
F^{(2)}_1(\tau) &= q^{-\frac{3}{4}}-248q^{\frac{1}{4}}-85995q^{\frac{5}{4}}-4096248q^{\frac{9}{4}}-91951146q^{\frac{13}{4}}+\dots
\end{split}
\end{gather}
(cf.\ (\ref{eqn:int-pro:f0})). 

We can now construct the function $\hat{F}$ corresponding to $\{F^{(n)}\}_{n|2}$ in Lemma \ref{repackaged_lem} by setting
\begin{equation}\label{eqn:hatFN=2}
\begin{alignedat}{2}
\hat{F}_{(0,0)}(\tau) &:= F^{(2)}(\tau),
&\qquad
\hat{F}_{(0,1)}(\tau) &:= F^{(1)}(\tau),
\\
\hat{F}_{(1,0)}(\tau) &:= 
\tau^{-\frac{1}{2}}\varrho_1(\widetilde{S})^{-1}F^{(1)}(-\tfrac{1}{\tau}), 
&\qquad 
\hat{F}_{(1,1)}(\tau) &:= 
(\tau+1)^{-\frac{1}{2}}\varrho_1(\widetilde{S}\widetilde{T})^{-1}F^{(1)}(-\tfrac{1}{\tau+1}).
\end{alignedat}
\end{equation}
Then, using (\ref{eqn:hatFN=2}) and the formula $\eta(-\frac1\tau)=\ex(-\frac18)\tau^{\frac12}\eta(\tau)$ we find that the image $\check{F}$ of $\{F^{(n)}\}_{n|2}$ under the map (\ref{eqn:FbntocheckF}) satisfies
\begin{equation}\label{eqn:checkFN=2}
\begin{alignedat}{2}
\check{F}_{(0,0)} &=
\frac{1}{2}F^{(2)}+\frac12F^{(1)}, 
&\qquad
\check{F}_{(0,1)} &= 
\frac{1}{2}F^{(2)}-\frac12F^{(1)},
\\
\check{F}_{(1,0)} &=
\frac{1}{2}F^{(2)}-\frac12F^{(1)}+8\theta^0_1, 
&\qquad 
\check{F}_{(1,1)} &=
\hat{F}_{(1,0)}
-\frac{1}{2}F^{(2)}
+\frac12F^{(1)}-8\theta^0_1.
\end{alignedat}
\end{equation}

To make $\check{F}_{(1,1)}$ completely explicit we note that
\begin{gather}
\begin{split}\label{eqn:rpk-eg1:hatF}
\hat F_{(1,0),0}(\tau) &= 
4\frac{\eta(\tau)^6\eta(\frac{\tau}4)^{14}}{\eta(\frac{\tau}2)^{19}}+4\frac{\eta(\frac\tau2)^{23}}{\eta(\tau)^8\eta(\frac{\tau}4)^{14}}
=8+768q^{\frac12}+13328q+125440q^{\frac32}+\dots ,\\
\hat F_{(1,0),1}(\tau) &= 
4\frac{\eta(\tau)^6\eta(\frac{\tau}4)^{14}}{\eta(\frac{\tau}2)^{19}}-4\frac{\eta(\frac\tau2)^{23}}{\eta(\tau)^8\eta(\frac{\tau}4)^{14}}
=-112q^{\frac14}-3584q^{\frac34}-43008q^{\frac54}
+\dots.
\end{split}
\end{gather}
\end{eg}

\begin{eg}\label{eg:rpk:2}
It is instructive to carry out the repackaging procedure in the specific case that the input family $\{F^{(n)}\}_{n|N}$ is composed of theta functions. 
For a basic example of this let $N$ be a prime, and let $c^{(1)}$ and $c^{(N)}$ be integers such that $c^{(1)}\equiv c^{(N)}\xmod N$. 
Then 
for $G=\ZZ/N\ZZ$ there exists a weakly holomorphic $G$-module $W$ of weight $\frac12$ and index $1$, as in (\ref{eqn:res-for:W}), such that
\begin{gather}\label{eqn:res-rpk:Fgtheta10}
	F^W_g:=
	\begin{cases}
	c^{(N)}\theta_1^0 &\text{ if $o(g)=1$,}\\
	c^{(1)}\theta_1^0&\text{ if $o(g)\neq 1$,}
	\end{cases}
\end{gather}
where $\theta_1^0=(\theta_{1,0}^0,\theta_{1,1}^0)$ is as defined in \S~\ref{sec:pre-mod}.

So let us take $\{F^{(n)}=F_{g^n}^W\}_{n|N}$ for $F^W_g$ as in (\ref{eqn:res-rpk:Fgtheta10}). 
Then, since $\theta_1^0$ is invariant for the weight $\frac12$ action of $\widetilde{\SL}_2(\ZZ)$ defined by the Weil representation $\varrho_1$, we have
\begin{gather}
	\hat{F}_{(i,j)} = 
	\begin{cases}
		c^{(N)}\theta_1^0&\text{ if $i\equiv j\equiv 0\xmod N$,}\\
		c^{(1)}\theta_1^0&\text{ else,}
	\end{cases}
\end{gather} 
and it follows from this that $\check{F}^W$, being the image of $\{F^W_{g^n}\}_{n|N}$ under (\ref{eqn:FbntocheckF}),  is given by 
\begin{gather}\label{eqn:checkF2}
	\check{F}^W_{(i,j)} = 
	\begin{cases}
		\frac1N\left(c^{(N)}+(N-1)c^{(1)}\right)\theta_1^0&\text{ if $i\equiv 0\xmod N$ and $j\equiv 0\xmod N$,}\\		
		\frac1N\left(c^{(N)}-c^{(1)}\right)\theta_1^0&\text{ if $i\equiv 0\xmod N$ and $j\not\equiv 0\xmod N$,}\\		
		c^{(1)}\theta_1^0&\text{ if $i\not\equiv 0\xmod N$ and $j\equiv 0\xmod N$,}\\	
		0&\text{ if $i\not\equiv 0\xmod N$ and $j\not\equiv 0\xmod N$.}
	\end{cases}	
\end{gather}
\end{eg}

\begin{eg}\label{eg:rpk:3}
We now put together the previous two examples so as to demonstrate how repackaging works for the input family $\{F^{(-3,1)}_{g^n}\}_{n|2}$ of penumbral moonshine \cite{Harvey:2015mca,pmo}, for $g$ an element of order $N=2$ in the Thompson group $\Th$. 
(We use this example in \cite{pmt}.)

We first inspect Table A.6 of \cite{Griffin2016}, and in so doing arrive at the identity
\begin{gather}\label{eqn:F31gn2Fn}
F^{(-3,1)}_{g^n}= 2F^{(n)}+\tr(g^n|{\bf 248})\theta_1^0,
\end{gather}
for $g\in Th$ with $o(g)=2$, and $n|2$, 
where $F^{(n)}$ is the function so denoted 
in Example \ref{eg:rpk:1},
and ${\bf 248}$ denotes the unique irreducible representation of $\Th$ of dimension $248$.

We have $\tr(g|{\bf 248})=-8$ (see e.g.\ \cite{atlas}), so taking $N=2$, $c^{(2)}=248$ and $c^{(1)}=-8$ in Example \ref{eg:rpk:2}
we find that (\ref{eqn:checkF2}) specializes to 
\begin{equation}\label{eqn:checkF2_spec}
\begin{alignedat}{2}
\check{F}^2_{(0,0)} &=
120\theta_1^0,
&\qquad
\check{F}^2_{(0,1)} &= 
128\theta_1^0,
\\
\check{F}^2_{(1,0)} &=
-8\theta_1^0,
&\qquad 
\check{F}^2_{(1,1)} &=
0,
\end{alignedat}
\end{equation}
where we here write $\check{F}^2$ for the function denoted simply by $\check{F}$ in Example \ref{eg:rpk:2}.
We now put
(\ref{eqn:checkFN=2})
and
(\ref{eqn:checkF2_spec})
together according to (\ref{eqn:F31gn2Fn}), and find that 
\begin{equation}\label{eqn:checkF31N=2}
\begin{alignedat}{2}
\check{F}_{(0,0)} &=
\frac{1}{2}F^{(2)}+\frac12F^{(1)}, 
&\qquad
\check{F}_{(0,1)} &= 
\frac{1}{2}F^{(2)}-\frac12F^{(1)},
\\
\check{F}_{(1,0)} &=
\frac{1}{2}F^{(2)}-\frac12F^{(1)}-120\theta^0_1, 
&\qquad 
\check{F}_{(1,1)} &=
\hat{F}_{(1,0)}
-\frac{1}{2}F^{(2)}
+\frac12F^{(1)}+120\theta^0_1,
\end{alignedat}
\end{equation}
is the result of applying the map (\ref{eqn:FbntocheckF}) to $\{F^{(-3,1)}_{g^n}\}_{n|2}$.

For concreteness we note that $\hat{F}_{(1,0)}=2\hat{F}^1_{(1,0)}-8\theta_1^0$, where $\hat{F}^1_{(1,0)}$ is the function denoted $\hat{F}_{(1,0)}$ in (\ref{eqn:hatFN=2}) (cf.\ also \ref{eqn:rpk-eg1:hatF}).
\end{eg}

The top line of (\ref{eqn:hatFN=2}) 
demonstrates the general fact that $\hat{F}_{(0,n)} = F^{(n)}$ for $n$ a divisor of $N$, which follows directly from the construction (\ref{hatF definition}). 
In the situations of interest to us each $F^{(n)}$ will actually be modular for the restriction of $\varrho_m$ to $\widetilde{\Gamma}_0(N/n)$. In this case we have the following stronger result.
\begin{lem}\label{lem:FbnGamma0}
Suppose that $\{F^{(n)}\}_{n|N}$ is a family of vector-valued functions such that $F^{(n)}$ is a weakly holomorphic modular form of weight $\frac12$ and type $\varrho_{m}$ for $\widetilde{\Gamma}_0(N/n)$. 
Then 
for $\{\hat{F}_{(i,j)}\}$ the corresponding family in Lemma \ref{repackaged_lem} we have $\hat{F}_{(0,j)}=F^{(\gcd(j,N))}$ for all $j\in \ZZ/N\ZZ$.
\end{lem}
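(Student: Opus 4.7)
The plan is to show that the stronger $\widetilde\Gamma_0(N/n)$-invariance hypothesis makes the choice of matrix $\gamma_0$ used in the construction \eqref{hatF definition} harmless, so that $\hat F_{(0,j)}$ reduces to the canonical value $F^{(n)}$.

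Fix $j\in\ZZ/N\ZZ$ and set $n=\gcd(j,N)$. By the construction \eqref{hatF definition} in the proof of Lemma \ref{repackaged_lem}, we have $\hat F_{(0,j)} = F^{(n)}\big|_m(\gamma_0,u_0)$, where $(\gamma_0,u_0)\in\widetilde{\SL}_2(\ZZ)$ is any lift of a matrix $\overline{\gamma_0}\in\SL_2(\ZZ/(N/n)\ZZ)$ whose bottom row is $(0/n,j/n)=(0,j/n)$ modulo $N/n$ (cf.\ \eqref{A_matrix}). Note that $j/n$ is a unit modulo $N/n$ by the definition of $n$, so such $\overline{\gamma_0}$ exists.

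The first observation is that the bottom-left entry of $\gamma_0$ is then $\equiv 0\pmod{N/n}$, so $\gamma_0\in\Gamma_0(N/n)$ and consequently $(\gamma_0,u_0)\in\widetilde\Gamma_0(N/n)$. The second observation is that by hypothesis $F^{(n)}$ is invariant under the slash-$m$ action of $\widetilde\Gamma_0(N/n)$, not merely $\widetilde\Gamma_1(N/n)$. Combining these two observations gives
\begin{gather*}
\hat F_{(0,j)} \;=\; F^{(n)}\big|_m(\gamma_0,u_0) \;=\; F^{(n)} \;=\; F^{(\gcd(j,N))},
\end{gather*}
as claimed. The same reasoning also shows that, under the stronger hypothesis, the choice of lift $(\gamma_0,u_0)$ is immaterial: two lifts differ by an element of $\widetilde\Gamma_1(N/n)\subset\widetilde\Gamma_0(N/n)$, which fixes $F^{(n)}$.

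No real obstacle is anticipated here; the only point worth double-checking is the compatibility of the two actions on the metaplectic cover, i.e.\ verifying that the factor $u(\tau)$ and the representation $\varrho_m$ behave correctly on $\widetilde\Gamma_0(N/n)$ so that the hypothesis ``modular of type $\varrho_m$ for $\widetilde\Gamma_0(N/n)$'' really does give $F^{(n)}|_m(\gamma_0,u_0)=F^{(n)}$ for every lift. This is immediate from the definition \eqref{eqn:fslashm} of the slash operator.
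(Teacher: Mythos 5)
Your argument is correct and is essentially the paper's own proof: in both cases one observes that the matrix $\gamma_0$ used in (\ref{hatF definition}) for the index $(0,j)$ has bottom row congruent to $(0,j/n)$ modulo $N/n$, hence lies in $\Gamma_0(N/n)$, so the stronger $\widetilde{\Gamma}_0(N/n)$-invariance hypothesis immediately gives $\hat F_{(0,j)}=F^{(n)}|_m(\gamma_0,u_0)=F^{(n)}$. No gaps; your extra remarks about the choice of lift being immaterial are consistent with, though not needed beyond, the paper's argument.
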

\begin{proof}
Let $n=\gcd(j,N)$. Then we have $\hat{F}_{(0,j)}=(F^{(n)}|_m(\gamma_0,u_0))$ for some $(\gamma_0,u_0)\in \widetilde{\SL}_2(\ZZ)$ by construction (\ref{hatF definition}), where the bottom row of $\gamma_0$ is congruent to $(0,j/n)$ modulo $N/n$. So $(\gamma_0,u_0)$ belongs to $\widetilde{\Gamma}_0(N/n)$, and thus $(F^{(n)}|_m(\gamma_0,u_0))=F^{(n)}$ by hypothesis. This proves the claim.
\end{proof}

\subsection{Lifting}\label{sec:res-lif}

We have defined a map (\ref{eqn:FWgntocheckFWg}) which may be used to exchange the 
McKay--Thompson series $\{F^W_g\}_{g\in G}$ of a $G$-module $W$ as in (\ref{eqn:res-for:W}), each $F^W_g$ (cf.\ (\ref{eqn:res-for:FWgr})) being a weakly holomorphic modular form of type $\varrho_m$ with level $N_g$, for an equivalent family $\{\check{F}^W_g\}_{g\in G}$, where each $\check{F}^W_g$ is a weakly holomorphic modular form of type $\varrho_{m,N_g}$ with level $1$. It is this latter collection of functions to which the machinery of \cite{Borcherds:1996uda} is most directly applied. 
In this section we will 
explain how Borcherds' computation (in op.\ cit.) of the singular theta lift 
\begin{gather}\label{eqn:res-lif:stl}
\Phi_{L}(v^+,\check{F}) := \int_{\mathcal{F}}^\reg
\overline{\Theta_{L}(\tau,v^+)}\check{F}(\tau)
\frac{{\rm d}\tau_1{\rm d}\tau_2}{\tau_2}
\end{gather}
of $\check{F}=\check{F}^W_g$ reads, in the $O_{2,1}$ situation of relevance to us.

The 
notation in (\ref{eqn:res-lif:stl}) 
is just as in 
(\ref{eqn:int-pro:int}),
but here we note that complex conjugation acts on the basis elements of $\mathbb{C}[L_m^\ast/L_m]$ as $\overline{e_r} = e_{-r}$, so that the pairing of $\overline{\Theta_{L}(\tau,v^+)}$ with $F(\tau)$ is the one which multiplies the $e_{-r}$ component of the former with the $e_r$ component of the latter, and sums over $r$. 
Also, 
we will be taking $L=L_{m,N}$ for $N=N_g$ (cf.\ (\ref{eqn:res-for:FWgnVwh})), and 
the superscript in the integral sign indicates an application of the regularization explained in \S~6 of \cite{Borcherds:1996uda} (see also \cite{Harvey:1995fq}). 
We require some further preparation in order to formulate the relevant results from \cite{Borcherds:1996uda}. We perform this preparation next.

Given positive integers $m$ and $N$ define traceless $2\times 2$ matrices $\lambda(c,b,a)$, and a set $L$ of such things, by taking
\begin{gather}\label{eqn:pre-lat:LmN2x2}
	\lambda(c,b,a):=\left(\begin{matrix}a & b/m \\ Nc & -a\end{matrix}\right),\quad
	L := \left\{ \lambda(c,b,a)\mid a,b, c \in \ZZ    \right\}.
\end{gather}
Then $L$ becomes a copy of the lattice $L_{m,N}$ of (\ref{eqn:pre-lat:LmN}) once we equip it with the quadratic form $Q_L$ given by 
$Q_L(\lambda) := -m\det(\lambda)$.
In terms of the $\lambda(c,b,a)$,
the quadratic form and associated bilinear form on $L\cong L_{m,N}$ are 
given explicitly by
\begin{gather}
\begin{split}\label{eqn:QLmNexp}
Q_L(\lambda(c,b,a)) & = ma^2+Nbc,\\
(\lambda(c,b,a),\lambda(c',b',a')) &= 2maa'+N(b' c + b c').
\end{split}
\end{gather}
Using this description we find that the dual of $L\cong L_{m,N}$ is
\begin{gather}\label{eqn:Lstar}
L^\ast = 
\left.\left\{\lambda\left(\tfrac{c}{N},\tfrac{b}{N},\tfrac{a}{2m}\right)\,\right|\, a,b,c\in \ZZ\right\}.
\end{gather}

One convenience of this realization (\ref{eqn:pre-lat:LmN2x2}--\ref{eqn:QLmNexp}) of $L_{m,N}$ is that it makes the automorphisms of the lattice easier to see. 
For this we define a left action of $\SL_2(\mathbb{R})$ on $L\otimes \RR$ by isometries by setting
\begin{gather}\label{sl2r_action}
\gamma\cdot\lambda:=\gamma\lambda\gamma^{-1}
\end{gather}
for $\gamma\in \SL_2(\RR)$ and $\lambda\in L\otimes \RR$.
Then the automorphism group $\Aut(L_{m,N})$ is the subgroup of $\SL_2(\mathbb{R})$ composed of the elements that preserve the lattice, when acting as in (\ref{sl2r_action}). 
In general it contains at least $\Gamma_0(mN)$ as a subgroup.

The computation of (\ref{eqn:res-lif:stl}) in \cite{Borcherds:1996uda} (see also \cite{Harvey:1995fq}) produces an automorphic form on a Grassmannian. We now explain what such an object is in the setup of interest to us.
Since $L$ has signature $(2,1)$ we may consider the Grassmannian $G(L)$ of $2$-dimensional positive definite subspaces of $L\otimes \RR$. 
Following op.\ cit.\ we equip this space with a complex structure as follows.
First consider the particular $2$-dimensional positive definite subspace 
\begin{gather}
v^+=\left\{\lambda(\tfrac1Nx,mx,y)\mid x,y\in\RR\right\}\in G(L),
\end{gather} 
and give it an orientation by declaring that the ordered basis $(X_L,Y_L)$ for $v^+$ is positively oriented, for the particular choice 
\begin{gather}\label{eqn:particularXLYL}
	X_L=
	\lambda(\tfrac1N,m,0)
	,\quad 
	Y_L=
	\lambda(0,0,1)
	.
\end{gather}
Then we obtain an orientation on every element of $G(L)$ by requiring that the choice varies continuously, as we move away from $v^+$. 
Next, for an arbitrary oriented $2$-dimensional positive definite subspace $v^+\in G(L)$ we consider the points 
\begin{gather}\label{eqn:ZL}
Z_L=X_L+iY_L\in L\otimes \CC,
\end{gather} 
such that $(X_L,Y_L)$ is a positively oriented ordered orthogonal basis of $v^+$ with $Q_L(X_L)=Q_L(Y_L)$. Each such $Z_L$ has norm zero, and all define the same point $\CC Z_L$ in the complex projective space $\PP(L\otimes \CC)$. So finally we use the association $v^+\mapsto \CC Z_L$ to identify $G(L)$ with an open subset of $\PP(L\otimes \CC)$, and thereby obtain a complex structure on $G(L)$. 

Now to define automorphic forms on $G(L)$ we observe that the norm zero points $Z_L$ of (\ref{eqn:ZL}) form a principal $\CC^*$-bundle $P$ over $G(L)$, so realized as a subset of $\PP(L\otimes \CC)$.  
An automorphic form of weight $k$ on $G(L)$, in the sense of op.\ cit., is a function on $P$ which is homogeneous of degree $-k$ and invariant under some subgroup $\Gamma$ of finite index in $\Aut(L)^+$, the orientation preserving automorphisms of $L$. 

Borcherds provides product formulae for the automorphic forms produced by the singular theta lift; one for each cusp of the invariance group. To formulate this concretely let $\ell$ be a primitive norm zero vector in $L$, and let $\ell'\in L^*$ be such that $(\ell,\ell')=1$. Then for any finite-index subgroup $\Gamma<\Aut(L)^+$ as in the last paragraph, $\ell$ represents a cusp of $\Gamma$. 
Define a corresponding lattice 
$K$ by setting
\begin{gather}\label{eqn:K}
	K:=(L\cap \ell^\perp)/\ZZ\ell.
\end{gather}
Identify $K\otimes \RR$ with the orthogonal complement of $\ell'$ in $L\otimes \RR\cap\ell^\perp$ in the natural way, and in so doing identify $K$ with a subgroup of $L\otimes \RR$.

In our situation $K$ is even, positive definite, and has rank $1$ by construction, so 
$K\cong \sqrt{2\Km}\ZZ$
for some positive integer $\Km$. 
In particular, there are two choices of positive cone in $K\otimes \RR$. 
Following op.\ cit.\ we take $C\subset K\otimes \RR$ to be the one that contains a vector $Y_L$, where $Z_L=X_L+iY_L$ is a point of $P$ as in (\ref{eqn:ZL}), and furthermore $(X_L,\ell)>0$. (We necessarily have $(Y_L,\ell)=0$ for $Y_L\in K\otimes \RR$.) In the remainder of this section we use $\Kb$ to denote the generator of $K$ that lies in $C$, and set $\Kb':=\frac{1}{2\Km}\Kb$, so that
\begin{gather}\label{eqn:KKstarC}
	K=\ZZ \Kb,\quad
	K^*=
	\ZZ\Kb',\quad
	C=\RR^+\Kb=\RR^+\Kb'.
\end{gather}

Now consider the set of points $Z=X+iY\in K\otimes \CC$ with $Y\in C$. Since the rank of $K$ is $1$ this is actually a copy of the upper half-plane $\HH$. We may map it into $P$, and thereby interpret the automorphic form produced by (\ref{eqn:res-lif:stl}) as a modular form, by sending such a $Z=X+iY$ to the unique norm zero vector $Z_L\in P$ as in (\ref{eqn:ZL}) that satisfies $(Z_L,\ell)=1$, and whose projection onto $K$ recovers $Z$. Explicitly, the map $Z\mapsto Z_L$ is given by setting
\begin{gather}\label{eqn:ZtoZL}
	Z_L:=Z-Q_L(Z)\ell-Q_L(\ell')\ell+\ell'.
\end{gather} 

With $Z_L$ as in (\ref{eqn:ZtoZL}) the association $Z\mapsto \CC Z_L$ defines an isomorphism of $K\otimes\RR + iC \simeq \HH$ with $G(L)$.
In particular, automorphic forms on $G(L)$ may naturally be regarded as modular forms on the upper half-plane. 
(Note that the weight picks up a factor of two when the automorphic form produced by (\ref{eqn:res-lif:stl}) is regarded as a modular form. 
Cf.\ Example 14.4 in \cite{Borcherds:1996uda}.)
We write
\begin{gather}\label{eqn:PsiellZcheckF}
	\Psi_\ell(Z,\check{F}) := \Psi_L\left(Z-Q_L(Z)\ell-Q_L(\ell')\ell+\ell',\check{F}\right)
\end{gather}
for the modular form defined by (\ref{eqn:ZtoZL}).

A further key feature of the singular theta lift of op.\ cit.\ is that it gives us direct knowledge of the divisors of the automorphic forms that it produces. 
In our situation the zeros and poles on the upper half-plane 
occur at points $\CC Z_L$, for $Z_L$ as in (\ref{eqn:ZtoZL}) that satisfy 
\begin{gather}\label{eqn:ZLlambda}
(Z_L,\lambda)=0
\end{gather}
for some $\lambda\in L$ with $Q_L(\lambda)<0$. Following Borcherds we denote such a point $\CC Z_L$ by $\lambda^\perp$. 

We compute the zeros and poles at cusps by computing the corresponding Weyl vectors,
$\rho(K,W,\check{F}_K)$, 
as defined after the proof of Theorem 10.3 in op.\ cit. 
Here $K$ is as in (\ref{eqn:K}), for a given cusp representative $\ell$, and the symbol $W$ denotes a Weyl chamber, as defined after the proof of Theorem 6.2 in op.\ cit. 
In our situation the Weyl chamber is simply a choice of positive cone in $K\otimes \RR$, so we may and do take $W=C$.
The function $\check{F}_K=(\check{F}_{K,r})$ 
is the weakly holomorphic modular form of weight $\frac12$ and type $\varrho_K=\varrho_\Km$ (cf.\ (\ref{eqn:KKstarC})) for $\widetilde{\SL}_2(\ZZ)$ that we obtain 
by setting
\begin{gather}\label{eqn:checkFKr}
	\check{F}_{K,r}:=
	\sum_{\substack{i,j\xmod N,\,s\xmod 2m\\(i,j,s)|(L\cap \ell^\perp) = r}}
	\check{F}_{i,j,s}
\end{gather}
for $r \xmod 2\Km$,
where the summation in (\ref{eqn:checkFKr}) is over the 
$(i,j,s)$, for $i,j\xmod N$ and $s\xmod 2m$, such that the vectors in the coset of $L$ represented by 
$\lambda(\frac{i}{N},\frac{j}{N},\frac{s}{2m})$ 
define the same subset of $\hom(L\cap\ell^\perp,\ZZ)$ as those in the coset of $K$ 
represented by 
$r\Kb'$
(cf.\ (\ref{eqn:KKstarC})).
(This is a specialization of the definition preceding the proof of Theorem 5.3 in \cite{Borcherds:1996uda}.)

With the above understanding 
the Weyl vector $\rho(K,C,\check{F}_K)$ may now be defined by requiring that
\begin{gather}\label{eqn:rhoKWcheckFK}
	(\rho(K,C,\check{F}_K),\Kb) = 
	\frac{\sqrt{Q_L(\Kb)}}{8\pi}
	\int^\reg_\mathcal{F}
	\overline{\theta_\Km^0(\tau)}\check{F}_K(\tau)
	{\tau_2^{-\frac32}}
	{{\rm d}\tau_1{\rm d}\tau_2}
\end{gather}
(cf.\ (4.8) in \cite{BruinierOno2010}), where 
$\theta_\Km^0=(\theta_{\Km,r}^0)$
is the 
theta function associated to $K$ (cf.\ (\ref{eqn:pre-mod:thetanull})), 
and the notation in (\ref{eqn:rhoKWcheckFK}) is otherwise as in (\ref{eqn:res-lif:stl}). 
We henceforth write $\rho(K,\check{F}_K)$ for $\rho(K,C,\check{F}_K)$, since there is a natural choice $W=C$ of Weyl chamber in the situation at hand.

Note that the 
integral in (\ref{eqn:rhoKWcheckFK}) 
is 
computed in Corollary 9.6 in \cite{Borcherds:1996uda}, with the result being that 
\begin{gather}\label{eqn:intThetaLcheckFK}
	\int_{\mathcal{F}}^\reg\overline{\theta_\Km^0(\tau)}\check{F}_K(\tau)\tau_2^{-\frac32}{{\rm d}\tau_1{\rm d}\tau_2}
	=-\frac{8\pi}{\sqrt{\Km}}\sum_{r\xmod 2\Km}\sum_{\substack{D\equiv r^2\xmod 4\Km\\D\leq 0}} 
	\check{C}_K(D,r)
	H_{\Km}(D,r)
	,
\end{gather}
where $\check{C}_K(D,r)$ is the coefficient of $q^{\frac{D}{4\Km}}$ in the Fourier expansion of $\check{F}_{K,r}$,
and 
$H_{\Km}(D,r)$ is the coefficient of $q^{-\frac{D}{4\Km}}$ in the Fourier expansion of $G_{\Km,r}$, 
for $G_\Km=(G_{\Km,r})$ 
as defined in loc.\ cit. 
We have $Q_L(\Kb)=\Km$ according to (\ref{eqn:KKstarC}), so from (\ref{eqn:rhoKWcheckFK}--\ref{eqn:intThetaLcheckFK}) we obtain that
\begin{gather}\label{eqn:rhoKcheckFK}
\rho(K,\check{F}_K):=
\rho(K,C,\check{F}_K) = 
-H(K,\check{F}_K)\Kb',
\end{gather} 
where
\begin{gather}\label{eqn:HKCcheckFK}
	H(K,\check{F}_K)
	:=
	\sum_{r\xmod 2\Km}\sum_{\substack{D\equiv r^2\xmod 4\Km\\D\leq 0}} \check{C}_K(D,r)H_{\Km}(D,r).
\end{gather}

Note that the $H_\Km(D,r)$ are generalized class numbers. For example, when $\Km=1$ and $D$ is a negative discriminant $H_{1}(D,D)$ is the Hurwitz class number of $D$ (commonly denoted $H(D)$, cf.\ e.g.\ \cite{ZagierTSM}).
Taking $\breve{G}_1(\tau):=G_{1,0}(4\tau)+G_{1,1}(4\tau)$ we have
\begin{gather}\label{eqn:breveG1}
	\breve{G}_1(\tau)=\sum_{D\leq 0}H_1(D,D)q^D=-\frac1{12}+\frac13q^3+\frac12q^4+q^7+q^8+\dots,
\end{gather}
and $H_1(D,D)$ is the usual class number of the quadratic imaginary number field $\QQ(\sqrt{D})$ when $D<-4$ is fundamental. In light of this we call $H(K,\check{F}_K)$ the {\em generalized class number} associated to $\check{F}$ and the cusp $\ell$, and also use the notation
\begin{gather}\label{eqn:Hellg}
	H^W_\ell(g) := H(K,\check{F}_K),
\end{gather}
when $K=(L\cap \ell^\perp)/\ZZ\ell$, in the specific situation that $\check{F}=\check{F}^W_g$ is the image under the map (\ref{eqn:FbntocheckF}) of a family 
$\{F^W_{g^n}\}_{n|N}$, for $g\in G$ and $N=N_g$, for some $W$ as in 
the statement of Theorem \ref{thm:res}.

We are now ready to state the result of \cite{Borcherds:1996uda} that will allow us to validate and interpret the singular theta lift (\ref{eqn:res-lif:stl}). 
To ease notation in the statement we define 
\begin{gather}\label{eqn:kg}
	k^W_g:=\frac{1}{N}\sum_{n|N}\phi\left(\frac{N}{n}\right)C_{g^n}^W(0,0),
\end{gather}
where $\phi$ denotes the Euler totient function, and $C^W_g(D,r)$ 
is as in (\ref{eqn:res-for:CWgDr}).

\begin{thm}[Borcherds]
\label{thm:STL}
Let $G$ and $W$ be as in the statement of Theorem \ref{thm:res}, choose $g\in G$, and let $\check{F}=\check{F}^W_g$ be obtained as the image under the map (\ref{eqn:FWgntocheckFWg}) of $\{F^W_{g^n}\}_{n|N}$ where $N=N_g$ and $F^W_g=(F^W_{g,r})$ is as in (\ref{eqn:res-for:FWgr}). 
Also, let $L$ be as in (\ref{eqn:pre-lat:LmN2x2}--\ref{eqn:QLmNexp}) and let $P$ be as above.
Then there is a meromorphic function $\Psi_L(Z_L,\check{F})$ for $Z_L\in P$ with the following properties.
\begin{enumerate}
\item $\Psi_L(Z_L,\check{F})$ is an automorphic form of weight \label{itm:borthm_1}
$\frac12k^W_g$
for the group $\Aut(L,\check{F})$, with respect to some unitary character of finite order. 
\item The zeros and poles of $\Psi_L(Z_L,\check{F})$ in $\HH$ occur at points $\lambda^\perp$ for $\lambda\in L$, and when $\lambda=\lambda(c,b,a)$ the contribution of $\lambda^\perp$ to the divisor of $\Psi_L(Z_L,\check{F})$ is 
\label{itm:borthm_2}
\begin{gather}\label{eqn:STL:divcon}
	\sum_{\substack{x\in\RR^+\\Nxc,Nxb,2mxa\in\ZZ}}
	\check{C}_{xc,xb}(x^2(ma^2+Nbc),xa).
\end{gather}
\item The function $\Psi_L$ is related to the singular theta lift $\Phi_L$ of (\ref{eqn:res-lif:stl}) by \label{itm:borthm_3}
\begin{gather}\label{eqn:STL:logPsi}
\log|\Psi_L(Z_L,\check{F})| = -\frac{\Phi_L(Z_L,\check{F})}{4} - \frac{k^W_g}2\left(\log|Y_L| + \frac{\Gamma'(1)}2 + \log\sqrt{2\pi}     \right).
\end{gather}
\item 
For each primitive norm zero vector $\ell\in L$  \label{itm:borthm_4}
the infinite product
\begin{gather}\label{eqn:STL:infpro}
\ex(-(H(K,\check{F}_K)\Kb',Z))
\prod_{n>0}\;
\prod_{\substack{
i,j\xmod N, r\xmod 2m \\
(i,j,r)
\vert (L\cap\ell^\perp) 
=n}}
\left(1-\ex((n\Kb',Z)+(\lambda,\ell'))\right)^{\check{C}_{i,j}(n^2,r)},
\end{gather}
where in each factor $\lambda=\lambda(\frac{i}{N},\frac{j}{N},\frac{r}{2m})$,
converges for $Z$ in some neighborhood of the cusp of $\Aut(L,\check{F})$ represented by $\ell$, and is proportional to $\Psi_\ell(Z,\check{F})$ (cf.\ (\ref{eqn:PsiellZcheckF})) in the domain of its convergence.
\end{enumerate}
\end{thm}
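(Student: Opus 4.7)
The statement we need to establish is essentially a repackaging (in our terminology) of Theorem 13.3 of \cite{Borcherds:1996uda}, applied to the lattice $L\cong L_{m,N}$ of (\ref{eqn:pre-lat:LmN2x2}) equipped with the vector-valued modular form $\check{F}=\check{F}^W_g$ produced by our map (\ref{eqn:FWgntocheckFWg}). So the plan is to verify that the hypotheses of Borcherds' theorem are met, and then translate the four outputs of that theorem into the specialized form asserted here. The hypothesis check amounts to noting that $L$ has signature $(2,1)$ (so $b^+=2$, $b^-=1$ as required for an $O_{2,1}$-type lift), and that $\check{F}$ is a weakly holomorphic modular form of weight $\frac12$ and type $\varrho_{m,N}=\varrho_L$ for $\widetilde{\SL}_2(\ZZ)$, both of which are immediate from Lemma \ref{repackaged_lem}.

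For Item \ref{itm:borthm_1}, the weight of $\Psi_L$ as an automorphic form on $G(L)$ is, by Theorem 13.3(1) of \cite{Borcherds:1996uda}, one-half of the constant term of the zero component of $\check{F}$; thus I must identify this constant term with $\frac12 k^W_g$. Writing out the repackaging (\ref{dfn_hatF}) for $\check{F}=\check{F}^W_g$, and invoking Lemma \ref{lem:FbnGamma0} to identify $\hat{F}_{(0,j')}=F^W_{g^{\gcd(j',N)}}$, the coefficient of $q^0$ in $\check{F}^W_{g,0,0,0}$ becomes
\begin{gather*}
\check{C}^W_{g,0,0}(0,0)=\frac1N\sum_{j'\xmod N}C^W_{g^{\gcd(j',N)}}(0,0)=\frac1N\sum_{n\mid N}\phi(N/n)C^W_{g^n}(0,0)=k^W_g,
\end{gather*}
since the number of $j'\in\ZZ/N\ZZ$ with $\gcd(j',N)=n$ is $\phi(N/n)$. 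This matches (\ref{eqn:kg}). The unitary character of finite order is the default output of Borcherds' theorem and requires no further work, modulo the observation that the relevant invariance group contains $\Aut(L,\check{F})$.

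For Item \ref{itm:borthm_2}, Borcherds' general divisor formula gives each zero/pole as $\lambda^\perp$ for $\lambda\in L^*$ of negative norm, with multiplicity
$\sum_{x>0,\,x\lambda\in L^*}\check{C}(x^2Q_L(\lambda),x\lambda)$.
Using the identification (\ref{eqn:Lstar}) of $L^*$ with rescaled matrices, writing $\lambda=\lambda(c,b,a)\in L$, and recalling that $\check{F}$ is indexed by $L^*/L\cong \ZZ/N\ZZ\times\ZZ/N\ZZ\times\ZZ/2m\ZZ$, the condition $x\lambda\in L^*$ translates into $Nxc,Nxb,2mxa\in\ZZ$, and the class $x\lambda$ in the discriminant group reads $(xc,xb,xa)$, giving precisely (\ref{eqn:STL:divcon}) once we use $Q_L(\lambda)=ma^2+Nbc$ from (\ref{eqn:QLmNexp}). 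Item \ref{itm:borthm_3} is then simply the direct quotation of Theorem 13.3(3) (or equivalently Theorem 10.3) of op.\ cit., restricted to our setting; the constant $\frac{k^W_g}{2}$ is again the constant term identification from above.

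For Item \ref{itm:borthm_4} the plan is to invoke Theorem 13.3(5) of \cite{Borcherds:1996uda} directly and then identify each ingredient. The Weyl vector prefactor is $\ex(-(\rho(K,C,\check{F}_K),Z))$, which by our computation (\ref{eqn:rhoKcheckFK}) equals $\ex(-(H(K,\check{F}_K)\Kb',Z))$ since $\Km=Q_L(\Kb)$; here we rely on the Borcherds--Hurwitz class number evaluation (\ref{eqn:intThetaLcheckFK}) which justified the definition (\ref{eqn:HKCcheckFK}). The product over $\lambda\in K^*$ with $(\lambda,C)>0$ becomes a product over positive multiples $n\Kb'$ of the generator $\Kb'$ (since $K^*$ has rank $1$), and the inner product over cosets $\lambda\in L^*/\ZZ\ell$ with prescribed image in $K^*$ translates into a product over triples $(i,j,r)\in\ZZ/N\ZZ\times\ZZ/N\ZZ\times\ZZ/2m\ZZ$ whose restriction to $L\cap\ell^\perp$ is the class $n$, with exponent $\check{C}_{i,j}(Q_L(\lambda),r)=\check{C}_{i,j}(n^2,r)$, this last equality using $Q_L(\lambda)=n^2Q_L(\Kb')\cdot$ (appropriate normalization) and the identification $Q_L(n\Kb')=n^2$ in our rank-one positive definite $K$. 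The hardest part technically will be keeping the bookkeeping of the repackaged discriminant group, the restriction map $L^*/L\to K^*/K$, and the normalization conventions straight, so that the product formula (\ref{eqn:STL:infpro}) reads exactly as stated with no spurious factors; this is essentially a specialization of the general apparatus of op.\ cit.\ and no new ideas beyond careful translation are required.
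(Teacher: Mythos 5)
Your proposal is correct and follows essentially the same route as the paper: both cite Theorem 13.3 of Borcherds (with Items 1--3 and 5 of loc.\ cit.\ specializing to the four items here), and both reduce the only genuine translation work to identifying the constant term $\check{C}_{0,0}(0,0)$ with $k^W_g$ via the repackaging formula (\ref{dfn_hatF}) and Lemma \ref{lem:FbnGamma0}. Your extra bookkeeping for Items 2 and 4 is sound (and the slip of writing ``identify this constant term with $\frac12 k^W_g$'' rather than with $k^W_g$ is harmless, since your displayed computation gets it right).
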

In (\ref{eqn:STL:divcon}) and (\ref{eqn:STL:infpro}) 
we write $\check{C}_{i,j}(D,r)$ for the coefficient of $q^{\frac{D}{4m}}$ in the Fourier expansion of 
$
\check{F}_{(i,j),r}
=
\check{F}_{i,j,r}
$, 
and we interpret 
the condition $(i,j,r)|(L\cap\ell^\perp)=n$ as we did for (\ref{eqn:checkFKr}).
In (\ref{eqn:STL:logPsi}) we write $\Gamma'(1)$ for the value at $s=1$ of the derivative of the Gamma function.
In the proof of Theorem \ref{thm:STL} we will write $\hat{C}_{(i,j)}(D,r)$ for the coefficient of $q^{\frac{D}{4m}}$ in the Fourier expansion of $\hat{F}_{(i,j),r}$.

\begin{proof}[Proof of Theorem \ref{thm:STL}.]
The claimed result is a specialization of Theorem 13.3 of \cite{Borcherds:1996uda} to the situation of interest in this work.
More specifically, Items \ref{itm:borthm_1}--\ref{itm:borthm_3} above correspond to Items 1--3 in loc.\ cit., and Item \ref{itm:borthm_4} above corresponds to Item 5 in loc.\ cit. 
(Item 4 of loc.\ cit.\ does not apply to our choice of $L$.)
The only part that requires some translation is the computation of the weight $\frac12k^W_g$, which is $\frac12\check{C}_{0,0}(0,0)$ according to the statement in loc.\ cit.
To perform this translation recall that $\check{C}_{0,0}(0,0)=\frac1N\sum_{j'\xmod N}\hat{C}_{(0,j')}(0,0)$ according to (\ref{dfn_hatF}), and $\hat{C}_{(0,j')}(0,0)=C^W_{g^n}(0,0)$, where $n=\gcd(j',N)$, according to Lemma  
\ref{lem:FbnGamma0} and our hypothesis that $F^{(n)}=F_{g^n}^W$. 
So 
\begin{gather}
	\check{C}_{0,0}(0,0)=\frac1N\sum_{j'\xmod N}C^W_{g^n}(0,0),
\end{gather}
where in each summand $n=\gcd(j',N)$.
The coincidence of $\check{C}_{0,0}(0,0)$ with $k^W_g$ as defined in (\ref{eqn:kg}) now follows.
\end{proof}

\subsection{Convergence}\label{sec:res-CM}

The purpose of this section is to verify that the functions $T^W_g$, 
defined by (\ref{eqn:res-for:PsiWg}--\ref{eqn:res-for:TWg}), 
are weakly holomorphic modular forms of weight $0$. (Cf.\ Theorem \ref{thm:res}.)

The main challenge at hand is
to check the convergence and modularity of the product formula (\ref{eqn:res-for:PsiWg}). 
We use Theorem \ref{thm:STL} 
to do this. More specifically, we apply 
(\ref{eqn:STL:infpro})
in the special case that $\ell$ represents the infinite cusp, so we begin this section with a closer look at the constructions of (\ref{eqn:K}--\ref{eqn:Hellg}) for such $\ell$. 
We then carry out the application of Theorem \ref{thm:STL} in Proposition \ref{pro:conmod}. 
The modularity of $\eta^W_g$ follows directly from the definition (\ref{eqn:res-for:etaWg}), but it is not immediate that it has the same weight as $\Psi^W_g$. 
For this we require to understand how the $v_b(g|U)$ of (\ref{eqn:vng}) transform under power maps $g\mapsto g^p$. 
We present the necessary analysis in Lemmas \ref{lem:vngp} and \ref{lem:weight}.

For the rest of this section we take $\ell=\lambda(0,1,0)$, whereby $\ell$ represents the infinite cusp of $\Aut(L)^+$ (and any finite-index subgroup thereof). 
Then we may take $\ell'=\lambda(\frac{1}{N},0,0)$ (cf.\ (\ref{eqn:QLmNexp})), in which case $Q_L(\ell')=0$. 
With these choices 
\begin{gather}\label{eqn:Keg}
	L\cap\ell^\perp = \{\lambda(0,b,a)\mid a,b\in\ZZ\},
	\quad
	K=\{\lambda(0,0,a)\mid a\in\ZZ\}
\end{gather}
(cf.\ (\ref{eqn:K})), and in particular $\Km=m$ in (\ref{eqn:KKstarC}).

Now taking $X_L$ and $Y_L$ as in (\ref{eqn:particularXLYL}) we see that $Y_L\in K\otimes \RR$ and $(X_L,\ell)=1>0$, so the positive cone $C$ is composed of the positive real multiples of $\lambda(0,0,1)$. That is, $\Kb=\lambda(0,0,1)$ and $\Kb'=\lambda(0,0,\frac1{2m})$, and $C=\{\lambda(0,0,y)\mid y>0\}$, in the notation of (\ref{eqn:KKstarC}).

Identify the upper half-plane $\HH$ with $K\otimes \RR+iC$ by mapping $\tau\in \HH$ to 
\begin{gather}\label{eqn:Ztau}
Z(\tau):=\lambda(0,0,\tau)=\tau\Kb.
\end{gather} 
Then, writing $Z_L(\tau)$ for the point $Z_L$ corresponding to $Z=Z(\tau)$ as in (\ref{eqn:ZtoZL}), we have $Z_L(\tau) = Z(\tau)-Q_L(Z(\tau))\ell+\ell'$ since $Q_L(\ell')=0$, so that
\begin{gather}\label{eqn:ZLtau}
	Z_L(\tau) = 
	\lambda(\tfrac{1}{N},-m\tau^2,\tau)
	=
	\left(
	\begin{matrix}
		\tau&-\tau^2\\1&-\tau
	\end{matrix}
	\right)
\end{gather}
(cf.\ (\ref{eqn:pre-lat:LmN2x2})).

It is a pleasant exercise to check now that the action (\ref{sl2r_action}) of $\SL_2(\RR)$ induces the usual action by M\"obius transformations on $G(L)\simeq \HH$. Concretely, we have
\begin{gather}
	\CC(\gamma\cdot Z_L(\tau)) = \CC Z_L(\gamma\tau)
\end{gather}
for $\gamma\in \SL_2(\RR)$ and $\tau\in \HH$. We set 
\begin{gather}\label{eqn:PsielltaucheckF}
\Psi_\ell(\tau,\check{F}):=\Psi_\ell(Z(\tau),\check{F})=\Psi_L(Z_L(\tau),\check{F})
\end{gather} 
(cf.\ (\ref{eqn:PsiellZcheckF})) when $\ell=\lambda(0,1,0)$. 

To explain the computation of the points $\lambda^\perp$ (cf.\ (\ref{eqn:ZLlambda})) suppose that $\lambda=\lambda(c,b,a)\in L$ satisfies $Q_L(\lambda)=ma^2+Nbc<0$. Then we have 
\begin{gather}
	\begin{split}\label{eqn:ZLtaulambda}
	(Z_L(\tau),\lambda) &= (\lambda(\tfrac1N,-m\tau^2,\tau),\lambda(c,b,a))\\
	&=2ma\tau - mNc\tau^2 + b,
	\end{split}
\end{gather}
so $(Z_L(\tau),\lambda)=0$ just when $\tau$ is a solution of the quadratic polynomial $mNcX^2-2maX-b$. This polynomial has discriminant $4m(ma^2+Nbc)$, which is negative by our choice of $\lambda$. So there is indeed a solution in $\HH$, and it is unique. This is the point we call $\lambda^\perp$.

To compute $\check{F}_K=(\check{F}_{K,r})$ 
explicitly in terms of $\check{F}=(\check{F}_{i,j,r})$
(cf.\ (\ref{eqn:checkFKr})) 
let $\lambda\in L^*$ and let $\delta\in K^*$ belong to the coset defined by ${r}\Kb'=\lambda(0,0,\frac{r}{2m})$. Then $\lambda=\lambda(\frac{c}{N},\frac{b}{N},\frac{a}{2m})$ for some $a,b,c\in \ZZ$, and $\delta=n\Kb'=\lambda(0,0,\frac{n}{2m})$ for some $n\equiv r\xmod 2m$. Letting $\lambda(0,b',a')$ be an arbitrary element of $L\cap\ell^\perp$ (cf.\ (\ref{eqn:Keg})) we have
\begin{gather}\label{eqn:lambdagammaeg}
	(\lambda,\lambda(0,b',a'))
	=aa'+b'c,
	\quad
	(\delta,\lambda(0',b',a'))
	=na',
\end{gather}
according to (\ref{eqn:QLmNexp}), so $\lambda$ and $\delta$ coincide as homomorphisms on $L\cap\ell^\perp$ just when $a=n$ and $c=0$.
We conclude from this that $\check{F}_{K,r}$ is the sum over the components of $\check{F}$ indexed by $\lambda(0,\frac{j}{N},\frac{r}{2m})$ for arbitrary $j\xmod N$. That is, the definition (\ref{eqn:checkFKr}) works out to 
\begin{gather}\label{eqn:checkFKeg}
	\check{F}_{K,r} = \sum_{j\xmod N} \check{F}_{0,j,r},
\end{gather}
or more simply $\check{F}_K=\sum_{j\xmod N}\check{F}_{0,j}$
in this case.

Observe now using (\ref{eqn:inversetransform}) that 
$\check{F}_K=\hat{F}_{(0,0)}=F^{(N)}=F^W_e$ under our hypotheses on $\check{F}$. Thus we have
\begin{gather}\label{eqn:Hellgeg}
	H^W_\ell(g)=
	H(K,\check{F}_K)
	=
	\sum_{r\xmod 2m}\sum_{\substack{D\equiv r^2\xmod 4m\\D\leq 0}} 
	{C}^W(D,r)H_{m}(D,r)
\end{gather}
(cf.\ (\ref{eqn:HKCcheckFK}--\ref{eqn:Hellg})) for the generalized class number associated to $\ell$ and $\check{F}=\check{F}^W_g$, when $\ell$ is the infinite cusp, where
$C^W(D,r)$ denotes the coefficient of $q^{\frac{D}{4m}}$ in $F^W_{e,r}$, for $F^W_e=(F^W_{e,r})$.
In particular, the generalized class number $H^W_\ell(g)$ is independent of $g$ when $\ell$ represents the infinite cusp. For this reason we drop $g$ from notation in $H^W_\ell(g)$ when $\ell=\lambda(0,1,0)$. Also dropping $\ell$ we obtain the definition of $H=H^W$ in (\ref{eqn:res-for:PsiWg}) that we promised in \S~\ref{sec:res-for}. Specifically, the definition is
\begin{gather}\label{eqn:H}
	H^W
	:=
	\sum_{r\xmod 2m}\sum_{\substack{D\equiv r^2\xmod 4m\\D\leq 0}}
	C^W(D,r)H_{m}(D,r),
\end{gather}
where $C^W(D,r)$ and $H_m(D,r)$ are as in (\ref{eqn:Hellgeg}).

With the definition (\ref{eqn:H}) of $H^W$ in place 
we are ready to 
confirm the validity of the Borcherds product construction (\ref{eqn:res-for:PsiWg}).

\begin{pro}\label{pro:conmod}
Suppose that 
$G$ and $W$ are as in the statement of Theorem \ref{thm:res}, and set $H=H^W$.
Then for each $g\in G$ the expression 
\begin{gather}\label{eqn:conmodpro}
q^{-H}\exp\left(-\sum_{n>0}\sum_{k>0} C^W_{g^k}(n^2,n)\frac{q^{nk}}{k}   \right)
\end{gather}
converges for $\Im(\tau)$ sufficiently large, and extends by analytic continuation to a modular form $\Psi^W_g$ of weight $k^W_g$ (see (\ref{eqn:kg}))
for 
$\Gamma_0(mN_g)$.
\end{pro}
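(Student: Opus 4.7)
The plan is to apply Theorem \ref{thm:STL} to $\check{F}=\check{F}^W_g$, specializing the product (\ref{eqn:STL:infpro}) to the cusp $\ell=\lambda(0,1,0)$, and then to match the resulting product with (\ref{eqn:conmodpro}) using discrete Fourier inversion and the rationality of $W$. Convergence and modularity will follow from Items \ref{itm:borthm_4} and \ref{itm:borthm_1} of Theorem \ref{thm:STL}, respectively.

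First I will make the ingredients of (\ref{eqn:STL:infpro}) fully explicit, using (\ref{eqn:Keg})--(\ref{eqn:Hellgeg}). Writing $N=N_g$, the bilinear form (\ref{eqn:QLmNexp}) immediately gives $(n\Kb',Z(\tau))=n\tau$ and $(\lambda,\ell')=j/N$ for $\lambda=\lambda(i/N,j/N,r/(2m))$, while the pairing analysis around (\ref{eqn:lambdagammaeg}) shows that the matching condition $(i,j,r)\vert(L\cap\ell^\perp)=n$ reduces to $i\equiv 0\pmod N$ and $r\equiv n\pmod{2m}$, with $j$ free modulo $N$. Combined with the identification $H(K,\check{F}_K)=H^W$ from (\ref{eqn:Hellgeg})--(\ref{eqn:H}), Theorem \ref{thm:STL} then yields, up to a nonzero constant,
\begin{gather*}
\Psi_\ell(\tau,\check{F}^W_g)=q^{-H^W}\prod_{n>0}\prod_{j\xmod N}\bigl(1-\ex(j/N)q^n\bigr)^{\check{C}^W_{g,0,j}(n^2,n)},
\end{gather*}
and asserts that this product converges for $\Im\tau$ sufficiently large and extends by analytic continuation to the upper half-plane.

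Next I will take the logarithm of this product, swap the order of summation over $n$ and the auxiliary index $k$ from the expansion of $\log(1-x)$, and apply the Fourier inversion (\ref{eqn:inversetransform}) to identify $\sum_{j\xmod N}\ex(jk/N)\check{C}^W_{g,0,j}(n^2,n)$ with $\hat{C}^W_{g,(0,k)}(n^2,n)$, which by Lemma \ref{lem:FbnGamma0} (with $k$ interpreted modulo $N$) equals $C^W_{g^{\gcd(k,N)}}(n^2,n)$. To complete the identification with (\ref{eqn:conmodpro}) I must establish that $C^W_{g^k}(n^2,n)=C^W_{g^{\gcd(k,N)}}(n^2,n)$ for every $k>0$. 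Since $o(g)\mid N$, one checks that $\gcd(k,o(g))=\gcd(\gcd(k,N),o(g))$, so that $g^k$ and $g^{\gcd(k,N)}$ generate the same cyclic subgroup of $\langle g\rangle$ and differ by raising to a power coprime to their common order. The rationality of $W$, through the Galois invariance of $\ZZ$-valued characters on cyclic groups, then forces the required equality of character values.

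Finally, Item \ref{itm:borthm_1} of Theorem \ref{thm:STL} provides automorphicity of weight $\tfrac12 k^W_g$ for $\Aut(L,\check{F}^W_g)$, which becomes weight $k^W_g$ under the isomorphism $K\otimes\RR+iC\simeq\HH$ in accordance with the factor-of-two remark noted after (\ref{eqn:ZtoZL}). To realize the level $\Gamma_0(mN)$ I will use that $\Gamma_0(mN)\subset\Aut(L_{m,N})$, verified by direct matrix computation from (\ref{sl2r_action}) and (\ref{eqn:pre-lat:LmN2x2}), together with a check that this subgroup acts trivially on the data parametrizing the components of $\check{F}^W_g$ via its induced action on $L^\ast/L$, placing $\Gamma_0(mN)$ inside $\Aut(L,\check{F}^W_g)$. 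The main obstacle is the rationality step: it is precisely the hypothesis that $W$ is rational which permits the passage from the $\gcd(k,N)$-indexed exponents naturally produced by the Borcherds product to the $k$-indexed exponents appearing in (\ref{eqn:conmodpro}); the remaining pieces amount to bookkeeping with the identifications already in place in \S\ref{sec:res-lif}--\S\ref{sec:res-CM}.
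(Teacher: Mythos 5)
Your proposal follows essentially the same route as the paper's proof: specialize Theorem \ref{thm:STL} to $\ell=\lambda(0,1,0)$, identify the resulting product with (\ref{eqn:conmod_1}) via (\ref{eqn:lambdagammaeg}--\ref{eqn:checkFKeg}) and (\ref{eqn:Hellgeg}--\ref{eqn:H}), take logarithms, apply the inverse transform (\ref{eqn:inversetransform}) and Lemma \ref{lem:FbnGamma0}, and inherit convergence and modularity of weight $k^W_g$ from Items \ref{itm:borthm_4} and \ref{itm:borthm_1}; in fact your Galois-invariance argument justifying $C^W_{g^k}(n^2,n)=C^W_{g^{\gcd(k,N)}}(n^2,n)$ makes explicit a step the paper passes over when it writes $\hat{F}_{(0,k)}=F^W_{g^k}$. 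One small caution: your claim that $\Gamma_0(mN_g)$ acts trivially on $L^*/L$ is stronger than what is true (e.g.\ the index $i$ transforms by $d^2$ modulo $N$); what is needed, and what the paper itself leaves implicit, is only the invariance of $\check{F}^W_g$ under the induced action, i.e.\ $\Gamma_0(mN_g)\subset\Aut(L,\check{F}^W_g)$.
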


\begin{proof}
We will prove the claimed result by showing that the infinite product expression (\ref{eqn:conmodpro}) coincides with the product (\ref{eqn:STL:infpro}) that we get from Theorem \ref{thm:STL} when $\ell=\lambda(0,1,0)$.

To begin we take $Z=Z(\tau)=\tau\Kb$ (cf.\ (\ref{eqn:Ztau})) in (\ref{eqn:STL:infpro}), 
interpret the second product in (\ref{eqn:STL:infpro}) as in (\ref{eqn:lambdagammaeg}--\ref{eqn:checkFKeg}), 
apply the computation (\ref{eqn:Hellgeg}--\ref{eqn:H}) of $H^W=H^W_\ell(g)$, and recall the choice $\ell'=\lambda(\frac1N,0,0)$ 
so as to obtain the expression
\begin{gather}\label{eqn:conmod_1}
	q^{-H}
\prod_{n>0}\;
\prod_{j\xmod N}
\left(1-\ex(\tfrac{j}{N})q^n\right)^{\check{C}_{0,j}(n^2,n)},
\end{gather}
for (\ref{eqn:STL:infpro}). 
According to Item \ref{itm:borthm_4} of Theorem \ref{thm:STL} this product (\ref{eqn:conmod_1}) converges for $\Im(\tau)$ sufficiently large, and defines a modular form of weight $k^W_g$ on $\HH$ where $k^W_g$ is as in (\ref{eqn:kg}). 

Next we remove the factor $q^{-H}$ from (\ref{eqn:conmod_1}) for a moment, take a logarithm, and expand so as to arrive at
\begin{gather}\label{eqn:cor-2}
\log\prod_{n>0} \prod_{j\xmod N }\Big(1-\ex(\tfrac{j}{N})q^n\Big)^{\check{C}_{0,j}({n^2,n})}
=-\sum_{n>0}\sum_{k>0}\sum_{j\xmod N}\check{C}_{0,j}\left({n^2},n\right)\frac{(\ex(\frac{j}{N})q^n)^k}{k}
\end{gather}
for $\Im(\tau)$ sufficiently large. 
Then we apply the inverse discrete Fourier transform (\ref{eqn:inversetransform}) to the sum over $j$ in the right-hand side of (\ref{eqn:cor-2}), 
and thus obtain the simpler expression
\begin{gather}\label{eqn:cor-3}
\log\prod_{n>0} \prod_{j\xmod N }\Big(1-\ex(\tfrac{j}{N})q^n\Big)^{\check{C}_{0,j}({n^2,n})}
=-\sum_{n>0}\sum_{k>0}\hat{C}_{(0,k)}\left({n^2,n}\right)\frac{q^{nk}}{k},
\end{gather}
where $\hat{C}_{(i,j)}(D,r)$ denotes the coefficient of $q^{\frac{D}{4m}}$ in the Fourier expansion of $\hat{F}_{(i,j),r}$, and the relationship between the $\hat{F}_{(i,j)}=(\hat{F}_{(i,j),r})$ and $\check{F}=(\check{F}_{i,j,r})$ is as in 
Lemma \ref{repackaged_lem}.

The next step is to exponentiate, and replace the factor $q^{-H}$, and thus find that (\ref{eqn:conmod_1}) coincides with
\begin{gather}
q^{-H}\exp\left( -\sum_{n>0} \sum_{k>0}\hat{C}_{(0,k)}({n^2,n})\frac{q^{nk}}{k}     \right).
\end{gather}
Finally we apply Lemma \ref{lem:FbnGamma0}, which tells us that $\hat{F}^W_{(0,k)} = F^W_{g^{k}}$ when $F^{(n)}=F^W_{g^n}$. 
Thus we have $\hat{C}_{(0,k)}({n^2,n}) = C^W_{g^k}(n^2,n)$, and the coincidence between (\ref{eqn:conmodpro}) and (\ref{eqn:conmod_1}) follows. This completes the proof.
\end{proof}

Now we know that the $T^W_g$ are weakly holomorphic modular forms, we can move on to computing their weight. 
For this we need two lemmas. 
The first tells us how the $v_b(g|U)$ of (\ref{eqn:vng}) change under power maps $g\mapsto g^p$. The second uses the result of the first to identify the weight $k^W_g$ (\ref{eqn:kg}) of $\Psi^W_g$ with the weight of $\eta^W_g$, and thus confirms that the $T^W_g$ all have weight $0$.

Note that if 
$\tr(g|U)$ is a rational integer then $\tr(g^n|U)$ is a rational integer too, for every $n\in \ZZ$,
because the eigenvalues defined by the action of $g^n$ on $U$ are just the $n$-th powers $\xi_i^n$ of the eigenvalues $\xi_i$ defined by $g$.
Thus the question of how the $v_b(g|U)$ of (\ref{eqn:vng}) change under power maps 
makes sense.
\begin{lem}\label{lem:vngp}
Suppose that $G$ is a finite group and $U$ is a finite-dimensional $G$-module, and suppose that $g\in G$ is such that $\tr(g|U)$ is a rational integer. 
Then for $p$ a prime and $v_b(g)=v_b(g|U)$ as in (\ref{eqn:vng}) we have
\begin{gather}\label{eqn:vngp}
	v_b(g^p) = 
	\begin{cases}
		pv_{bp}(g) &\text{ if $p$ divides $b$,}\\
		pv_{bp}(g) + v_b(g) &\text{ if $p$ does not divide $b$.}
	\end{cases}
\end{gather}
\end{lem}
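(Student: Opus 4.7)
The plan is to work at the level of the primitive-root multiplicities $u_d(g|U)$ from (\ref{eqn:udg}), and then convert the statement to one about the $v_b$ via the Möbius inversion formula $v_b(g|U) = \sum_{a > 0}\mu(a)\,u_{ab}(g|U)$ from (\ref{eqn:vngsumamuauang}). The key input is that the eigenvalues of $g^p$ on $U$ are the $p$-th powers of those of $g$, together with the fact that if $\xi$ is a primitive $e$-th root of unity then $\xi^p$ has order $e/\gcd(e,p)$.

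First I would establish the intermediate identity
\begin{gather*}
u_d(g^p|U) = \begin{cases} u_d(g|U) + (p-1)\,u_{dp}(g|U) & \text{if } p \nmid d,\\ p\,u_{dp}(g|U) & \text{if } p \mid d. \end{cases}
\end{gather*}
The only $e$-th roots $\xi$ whose $p$-th power is a primitive $d$-th root are those with $e = d$ (only possible when $p \nmid d$) or with $e = dp$. The map $\xi \mapsto \xi^p$ is a bijection on primitive $d$-th roots when $p \nmid d$, and by writing a fixed primitive $d$-th root as $\eta = \ex(m/d)$ and examining the solutions $\xi_j = \ex((m + jd)/(dp))$ to $\xi^p = \eta$ in $\mu_{dp}$ for $0 \leq j < p$, a short $\gcd$ check shows that the number of $\xi_j$ that are primitive $dp$-th roots is $p-1$ when $p \nmid d$ and is $p$ when $p \mid d$. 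Counting eigenvalues of $g^p$ equal to a given primitive $d$-th root in two ways then yields the identity, with the uniformity of the count in the chosen primitive $d$-th root ensuring that $u_d(g^p|U)$ is itself well defined.

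Next I would substitute this into $v_b(g^p|U) = \sum_{a>0}\mu(a)\,u_{ab}(g^p|U)$. When $p \mid b$, every $ab$ is divisible by $p$, the second branch applies uniformly, and a factor of $p$ pulls out to yield $p\,v_{bp}(g|U)$ directly. When $p \nmid b$, only squarefree $a$ contribute to the Möbius sum, so I would split the sum according to whether $\gcd(a,p) = 1$ (first branch) or $a = p a_0$ with $\gcd(a_0,p) = 1$ (second branch, using $\mu(p a_0) = -\mu(a_0)$). Introducing the shorthand $S_k := \sum_{a_0 : \gcd(a_0, p) = 1}\mu(a_0)\,u_{a_0 b p^k}(g|U)$, one computes $v_b(g^p|U) = S_0 + (p-1)S_1 - p S_2$, whereas the same splitting applied to $v_b(g|U)$ and to $v_{bp}(g|U)$ gives $v_b(g|U) = S_0 - S_1$ and $v_{bp}(g|U) = S_1 - S_2$. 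The identity $v_b(g^p|U) = v_b(g|U) + p\,v_{bp}(g|U)$ then follows by an immediate rearrangement.

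The only nontrivial step in the whole argument is the preimage count in the first part, specifically verifying that all $p$ lifts $\xi_j$ are primitive $dp$-th roots when $p \mid d$. This hinges on the observation that if $p \mid d$ and $\gcd(m,d) = 1$ then $p \nmid m$, so $\gcd(m + jd, p) = \gcd(m, p) = 1$ for every $j$; the complementary condition $\gcd(m + jd, d) = \gcd(m, d) = 1$ is automatic, and everything else in the proof reduces to formal manipulation with the Möbius function.
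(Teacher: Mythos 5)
Your proposal is correct and follows essentially the same route as the paper: it first derives the multiplicity identity for $u_d(g^p|U)$ by counting how primitive roots of unity behave under $\xi\mapsto\xi^p$ (this is exactly (\ref{eqn:udgp})), and then feeds it into the M\"obius-inversion formula (\ref{eqn:vngsumamuauang}), splitting the sum according to $\gcd(a,p)$ just as in (\ref{eqn:vngp-1}--\ref{eqn:vngp-2}). The bookkeeping via $S_0,S_1,S_2$ is only a cosmetic repackaging of the paper's rearrangement, so no further comment is needed.
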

\begin{proof}
To begin we consider the behavior of the multiplicities $u_d(g)=u_d(g|U)$ of (\ref{eqn:udg}) under power maps $g\mapsto g^p$ for $p$ prime.
Thus we let $p$ be a prime, and note first that if $\gcd(d,p)=1$ then the $p$-th power of a primitive $d$-th root of unity is again a primitive $d$-th root of unity. 
So $u_d(g^p)=u_d(g)$ if $d$ is not divisible by $p$. 
Next we observe that if $d$ is divisible by $p$ then the $p$-th power of a primitive $d$-th root of unity is a primitive $\frac{d}{p}$-th root of unity, but the multiplicity of the map $\xi\mapsto \xi^p$ in this case depends upon whether $d$ is divisible by $p^2$ or not. If so, then every primitive $\frac{d}{p}$-th root has $p$ primitive $d$-th root preimages under $\xi\mapsto \xi^p$, while if $p$ exactly divides $d$ then there are just $p-1$ primitive preimages. Replacing $d$ with $dp$ in the previous sentence we obtain that the $p$-th powers of the primitive $dp$-th roots of unity constitute $p$ copies 
of each primitive $d$-th root of unity 
if $p$ divides $d$, and constitute $p-1$ copies 
of each primitive $d$-th root of unity 
if $p$ does not divide $d$.
So we have
\begin{gather}\label{eqn:udgp}
	u_d(g^p)
	=
	\begin{cases}
	pu_{dp}(g)&\text{ if $p$ divides $d$,}\\
	(p-1)u_{dp}(g) + u_d(g)&\text{ if $p$ does not divide $d$.}
	\end{cases}
\end{gather}

To finish we combine (\ref{eqn:vngsumamuauang}) and (\ref{eqn:udgp}) in order to calculate $v_b(g^p)$. Taking $b$ to be a multiple of $p$ we find that
\begin{gather}
\begin{split}\label{eqn:vngp-1}
	v_b(g^p) 
	&= \sum_{a>0}\mu(a)\u_{ab}(g^p)\\
	&= \sum_{a>0}\mu(a)pu_{abp}(g)\\
	&=pv_{bp}(g)
\end{split}
\end{gather}
(cf.\ (\ref{eqn:vngsumamuauang})),
in agreement with the first part of (\ref{eqn:vngp}).
Taking $b$ now to be coprime to $p$ we compute that
\begin{gather}
\begin{split}\label{eqn:vngp-2}
	v_b(g^p) 
	&= \sum_{a>0}\mu(a)\u_{ab}(g^p)\\
	&= 
	\sum_{\substack{a>0\\ \gcd(a,p)=1}}\mu(a)((p-1)u_{abp}(g)+u_{ab}(g))
	+
	\sum_{\substack{a>0\\ p|a}}\mu(a)pu_{abp}(g)
	\\
	&=
	S_1+S_0,
\end{split}
\end{gather}
where 
$S_1=p\sum_{a>0}\mu(a)u_{abp}(g)$,
and 
\begin{gather}
\begin{split}
	S_0&=
	\sum_{\substack{a>0\\\gcd(a,p)=1}}\mu(a)u_{ab}(g) 
	-\sum_{\substack{a>0\\\gcd(a,p)=1}}\mu(a)u_{abp}(g) \\
	&=
	\sum_{\substack{a>0\\\gcd(a,p)=1}}\mu(a)u_{ab}(g) 
	+\sum_{\substack{a>0\\\gcd(a,p)=1}}\mu(ap)u_{abp}(g) \\
	&=
	\sum_{\substack{a>0\\\gcd(a,p)=1}}\mu(a)u_{ab}(g) 
	+\sum_{a>0}\mu(ap)u_{abp}(g) \\
	&=
	\sum_{a>0}\mu(a)u_{ab}(g).
\end{split}
\end{gather}
Inspecting (\ref{eqn:vngsumamuauang}) we obtain that 
$S_1=pv_{bp}(g)$ and $S_0=v_b(g)$,
so we have verified the second part of (\ref{eqn:vngp}), and the proof of the lemma is complete.
\end{proof}

By repeated application of Lemma \ref{lem:vngp} we can compute $v_b(g^n)$ in terms of $v_b(g)$ for any $n\in \ZZ$. For example, it follows from an inductive argument using Lemma \ref{lem:vngp} that if $g$ is as in the lemma and 
$p$ is a prime, then
\begin{gather}\label{eqn:vngpk}
	v_b(g^{p^k}) = 
	\begin{cases}
		p^kv_{bp^k}(g) &\text{ if $p$ divides $b$,}\\
		\sum_{j=0}^k p^jv_{bp^j}(g) &\text{ if $p$ does not divide $b$,}
	\end{cases}
\end{gather}
for all $k\geq 0$.

\begin{lem}\label{lem:weight}
Suppose that $G$ is a finite group and $U$ is a finite-dimensional $G$-module, and suppose that $g\in G$ is such that 
$\tr(g|U)$ is a rational integer.
Then for 
$N$ an arbitrary positive integer 
and for 
$v_b(g)=v_b(g|U)$ as in (\ref{eqn:vng}--\ref{eqn:vngsumamuauang}) 
we have
\begin{gather}\label{eqn:weight}
	\frac1N\sum_{n|N}\phi\left(\frac{N}{n}\right)\tr(g^n|U)
	= 
	\sum_{n|N}v_n(g).
\end{gather}
\end{lem}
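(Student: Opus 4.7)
The plan is to reduce the identity to the classical M\"obius/Euler relation $\sum_{d\mid m}\phi(d)=m$, by first extracting from Lemma \ref{lem:trgLambdaminustU} a convenient expression for $\tr(g^n|U)$ in terms of the $v_b(g|U)$.

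First I would take the logarithmic form of the two right-hand expressions in (\ref{eqn:trgLambdaminustU}), namely
\begin{equation*}
\sum_{b>0}v_b(g|U)\log(1-t^b)=-\sum_{k>0}\tr(g^k|U)\frac{t^k}{k}.
\end{equation*}
Expanding the left-hand side as $-\sum_{b>0}v_b(g|U)\sum_{j>0}\tfrac{t^{bj}}{j}$ and comparing the coefficient of $t^n$ on both sides yields the key identity
\begin{equation*}
\tr(g^n|U)=\sum_{b\mid n}bv_b(g|U),
\end{equation*}
valid for every positive integer $n$ and for any $g$, $U$ as in the statement.

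Next I would substitute this into the left-hand side of \eqref{eqn:weight} and interchange the order of summation, writing $v_b:=v_b(g|U)$ for brevity:
\begin{equation*}
\frac1N\sum_{n\mid N}\phi\!\left(\tfrac{N}{n}\right)\tr(g^n|U)
=\frac1N\sum_{n\mid N}\phi\!\left(\tfrac{N}{n}\right)\sum_{b\mid n}b\,v_b
=\frac1N\sum_{b\mid N}b\,v_b\sum_{\substack{n\mid N\\ b\mid n}}\phi\!\left(\tfrac{N}{n}\right).
\end{equation*}
For each fixed $b\mid N$, the integers $n$ with $b\mid n$ and $n\mid N$ are precisely $n=bk$ for $k\mid N/b$, so the inner sum equals $\sum_{k\mid N/b}\phi((N/b)/k)$, which is $N/b$ by the classical identity $\sum_{d\mid m}\phi(d)=m$. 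The factors of $b$ and $N$ therefore cancel, giving $\sum_{b\mid N}v_b(g|U)$, which is the right-hand side of \eqref{eqn:weight}.

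There is no real obstacle here: once one observes that Lemma \ref{lem:trgLambdaminustU} already encodes the formula $\tr(g^n|U)=\sum_{b\mid n}bv_b(g|U)$ by matching coefficients, the proof is a one-line swap of summations followed by an application of $\sum_{d\mid m}\phi(d)=m$. The only point requiring a small amount of care is the hypothesis that $\tr(g|U)\in\ZZ$, which is needed to guarantee that the $v_b(g|U)$ are well-defined via (\ref{eqn:vng}--\ref{eqn:vngsumamuauang}); as noted in the text, this automatically implies $\tr(g^n|U)\in\ZZ$ for all $n$, so all quantities in the argument make sense.
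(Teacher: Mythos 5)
Your proof is correct, and it takes a genuinely different route from the paper's. Your key step is to extract from Lemma \ref{lem:trgLambdaminustU} the full identity $\tr(g^n|U)=\sum_{b\mid n}b\,v_b(g|U)$ by comparing coefficients of $t^n$ in the logarithms of the two product/exponential expressions; after that, a swap of summation and the classical identity $\sum_{d\mid m}\phi(d)=m$ finish the argument in a few lines. The paper instead only reads off the much weaker relation $\tr(g^n|U)=v_1(g^n)$ (the $t$-linear coefficient), and then has to understand how the exponents $v_b$ behave under power maps: it invokes Lemma \ref{lem:vngp} (and its iterate (\ref{eqn:vngpk})) to prove the more general identity (\ref{eqn:weight-2}) first for $N$ a prime power, and then extends to general $N$ by a multiplicativity computation and induction on the number of prime divisors. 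Your approach is shorter and purely formal, avoids the prime-power case analysis entirely, and renders Lemma \ref{lem:vngp} unnecessary as an input to this particular lemma; what the paper's route buys in exchange is the explicit power-map formula (\ref{eqn:vngp}) for the Frame-shape exponents, which is of independent interest. The only hypothesis you need, as you note, is $\tr(g|U)\in\ZZ$, which guarantees (via Lemma \ref{lem:trgLambdaminustU}) that the $v_b(g|U)$ exist, and the manipulation of logarithms is legitimate as an identity of formal power series with constant term $1$.
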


\begin{proof}
To begin we 
note that $\tr(g^n|U)=v_1(g^n)$ for all $n\in\ZZ$, under our hypothesis that $\tr(g|U)\in\ZZ$.
This is because we have 
\begin{gather}
\begin{split}\label{eqn:weight-0a}
	\tr(g|\Lambda_{-t}(U))
	= 
	\prod_i(1-\xi_i t)
	&
	 = 
	1-\left(\sum_{i}\xi_i\right)t+O(t^2)
	\\
	&=
	1-\tr(g|U)t+O(t^2) 
\end{split}
\end{gather}
for the left-hand side of the defining identity (\ref{eqn:vng}) (cf.\ (\ref{eqn:udg})), where the $\xi_i$ are the eigenvalues defined by the action of $g$ on $U$, while the right-hand side satisfies 
\begin{gather}
\begin{split}\label{eqn:weight-0b}
	\prod_{b>0}(1-t^b)^{v_b(g)}
	=
	1-v_1(g)t+O(t^2). 
\end{split}
\end{gather}
The same argument applies with any power of $g$ in place of $g$, 
so we require to show that 
\begin{gather}\label{eqn:weight-1}
	\sum_{n|N}\phi\left(\frac{N}{n}\right)v_1(g^n)
	= 
	\sum_{n|N}Nv_n(g)
\end{gather}
for arbitrary $N>0$. To obtain (\ref{eqn:weight-1}) we will actually prove the more general statement that
\begin{gather}\label{eqn:weight-2}
	\sum_{n|N}\phi\left(\frac{N}{n}\right)v_m(g^n)
	= 
	\sum_{n|N}Nv_{mn}(g)
\end{gather}
for $m,N>0$, when $\gcd(m,N)=1$.

To establish (\ref{eqn:weight-2}) we begin with the case that $N$ is a prime power. 
Using (\ref{eqn:vngpk}) and the fact that $\sum_{n|N}\phi(n)=N$ we find that for $N=p^k$ with $p$ a prime such that $\gcd(m,p)=1$, the left-hand side of (\ref{eqn:weight-2}) works out to be
\begin{gather}
\begin{split}\label{eqn:weight-3}
	\sum_{j=0}^k\phi\left(p^{k-j}\right)v_m(g^{p^j})
	&=
	\sum_{j=0}^k\phi(p^{k-j})\sum_{i=0}^jp^iv_{mp^i}(g)
	\\
	&=
	\sum_{i=0}^kp^i\sum_{j=i}^k\phi(p^{k-j})v_{mp^i}(g)
	\\
	&=
	\sum_{i=0}^kp^i\sum_{j=0}^{k-i}\phi(p^{j})v_{mp^i}(g)
	\\
	&=\sum_{i=0}^kp^kv_{mp^i}(g),
\end{split}
\end{gather}
which agrees with the right-hand side of (\ref{eqn:weight-2}).

Now suppose that we have proven (\ref{eqn:weight-2}) for $N=N'$ and for $N=N''$, where $m$, $N'$ and $N''$ are pairwise coprime. Then for $N=N'N''$ we have
\begin{gather}
\begin{split}\label{eqn:weight-4}
	\sum_{n|N}\phi\left(\frac{N}{n}\right)v_m(g^n)
	&=
	\sum_{n'|N'}\sum_{n''|N''}\phi\left(\frac{N'N''}{n'n''}\right)v_m((g^{n'})^{n''})
	\\
	&=
	\sum_{n'|N'}\phi\left(\frac{N'}{n'}\right)\sum_{n''|N''}\phi\left(\frac{N''}{n''}\right)v_m((g^{n'})^{n''})
	\\
	&=
	\sum_{n'|N'}\phi\left(\frac{N'}{n'}\right)\sum_{n''|N''}N''v_{mn''}(g^{n'})
	\\
	&=
	\sum_{n''|N''}N''
	\sum_{n'|N'}
	\phi\left(\frac{N'}{n'}\right)
	v_{mn''}(g^{n'})
	\\
	&=
	\sum_{n''|N''}N''
	\sum_{n'|N'}
	N'
	v_{mn'n''}(g),
\end{split}
\end{gather}
which is $\sum_{n|N}Nv_{mn}(g)$.
The identity (\ref{eqn:weight-2}) now follows for arbitrary $N>0$ with $\gcd(m,N)=1$ by 
induction 
on the number of prime divisors of $N$.
Specializing to $m=1$ we have thus proven (\ref{eqn:weight-1}), as required.
\end{proof}

\subsection{Construction}\label{sec:res-con}

In this section we explain how to realize the infinite products that define $T^W_g$ (cf.\ (\ref{eqn:res-for:PsiWg}--\ref{eqn:res-for:TWg})) 
in terms of alternating and symmetric powers of subspaces of $W$ (cf.\ (\ref{eqn:res-for:W})), and finally complete the proof of Theorem \ref{thm:res}.

Taking $G$ and $W$ to be as in the statement of Theorem \ref{thm:res}, 
the task at hand is to define a virtual graded $G$-module $V=V^W$ as in (\ref{eqn:res-for:V}) with the property that $f^V_g=T^W_g$ for each $g\in G$, where $f^V_g$ is as in (\ref{eqn:res-for:fVg}) and $T^W_g$ is as in (\ref{eqn:res-for:TWg}).
To ease notation 
as we do this
let us define 
\begin{gather}\label{eqn:Un}
U_n:=
\begin{cases}
W_{n,\frac{n^2}{4m}}&\text{ for $n>0$,}\\
-2W_{0,0}&\text{ for $n=0$,}
\end{cases}
\end{gather}
and for each $n\geq 0$ write
$U_n=U_n^f-U_n^b,$ where $U_n^f$ and $U_n^b$ are as in (\ref{eqn:UfUb}) (with $U_n$ in place of $U$). 
Then by applying Lemma \ref{lem:trgLambdaminustU} to the definitions (\ref{eqn:res-for:PsiWg}--\ref{eqn:res-for:TWg}) we obtain
\begin{gather}
\begin{split}\label{eqn:TWg-trace}
	T^W_g(\tau)
	&=q^{-\vh}\prod_{n>0} \tr(g|\Lambda_{-q^n}(U_n))\tr(g|\Lambda_{-q^n}( U_0))
\\
	&=q^{-\vh}\prod_{n>0}\tr(g|\Lambda_{-q^n}(U_n^f)\otimes S_{q^n}(U_n^b)\otimes \Lambda_{-q^n}(U_0^f)\otimes S_{q^n}(U_0^b)),
\end{split}
\end{gather}
where 
\begin{gather}\label{eqn:res-con:c}
\vh=H+\frac1{24}(\dim(U_0^b)-\dim(U_0^f)).
\end{gather}

Next we make the convention that $U(-n)$ denotes a copy of $U$, for $U\in R(G)$ and $n$ a positive integer, 
and define operators $F$ and $\deg$ on such $U(-n)$ 
by requiring that 
\begin{gather}
	F 
	=\label{eqn:res-con:FL0}
	\begin{cases}
		\Id\text{ on $U^f(-n)$, }\\
		0 \text{ on $U^b(-n)$,}
	\end{cases}\quad
	{\deg} 
	= n\Id \text{ on $U(-n)$.}
\end{gather}
Then, we extend these operators to alternating, symmetric and tensor products of such spaces $U(-n)$ by applying the Lie-like coproduct, $X\mapsto X\otimes 1+1\otimes X$, so that ${\deg}$ acts as $(n'+n'')\Id$ on $U'(-n')\otimes U'(-n'')$, \&c.

With $F$ and $\deg$ so defined the right-hand side of (\ref{eqn:TWg-trace}) may be rewritten as
\begin{gather}\label{eqn:TWg-trace2}
	q^{-\vh}\prod_{n>0} \tr(g(-1)^Fq^{\deg}|\Lambda(U_n^f(-n))\otimes S(U_n^b(-n))\otimes \Lambda(U_0^f(-n))\otimes S(U_0^b(-n)))
\end{gather}
where $\vh$ is as in (\ref{eqn:res-con:c}).
Thus, if we 
define $\mc{H}(-n)$ for $n>0$ by setting
\begin{gather}\label{eqn:mathcalFn}
	\mc{H}(-n):=\Lambda(U_n^f(-n))\otimes S(U_n^b(-n))\otimes \Lambda(U_0^f(-n))\otimes S(U_0^b(-n)),
\end{gather}
and take 
$\mc{H}:=\bigotimes_{n> 0}\mc{H}(-n)$, 
then 
$\deg$ defines a $\ZZ$-grading $\mc{H}=\bigoplus_{n\geq 0} \mc{H}_n$ on $\mc{H}$, and 
$F$ defines a superspace structure $\mc{H}_n=\mc{H}_n^0\oplus \mc{H}_n^1$ 
on each $\mc{H}_n$ via the requirement that
$(-1)^F=(-1)^j$ on $\mc{H}_n^j$.

We now define $V^W=\bigoplus_{n\geq 0} V^W_n$ to be the virtual graded $G$-module that is determined by requiring that
\begin{gather}\label{eqn:res-con:VW}
	m_\chi(V^W_n) = m_\chi(\mc{H}^0_n)-m_\chi(\mc{H}^1_n)
\end{gather}
(cf.\ (\ref{eqn:mchiU})) for each $\chi\in \Irr(G)$ and $n\geq 0$.
Then taking $V=V^W$ we have that
\begin{gather}\label{eqn:fVg}
f^V_g(\tau)=\sum_{n\geq 0 } \tr(g|V_n)q^{n-\vh}=
\tr\left(\left.g(-1)^Fq^{\deg-\vh}\right|\mc{H}\right)
\end{gather} 
coincides with (\ref{eqn:TWg-trace2}) for $\vh$ as in (\ref{eqn:res-con:c}). That is, $f^V_g=T^W_g$ for all $g\in G$, 
and we have realized the $T^W_g$ as graded traces on a virtual graded $G$-module constructed (\ref{eqn:mathcalFn}--\ref{eqn:res-con:VW}) from tensor products of alternating and symmetric powers of subspaces of $W$, as we promised we would.
In particular, since we have shown in \S~\ref{sec:res-CM} that the $T^W_g$ are weakly holomorphic modular forms of weight $0$, 
and since a virtual graded $G$-module is determined by its graded traces, we have completed the proof of Theorem \ref{thm:res}.

\begin{rmk}\label{rmk:res-con:noVOA}
Each alternating algebra factor $\Lambda(U^f_n(-n))$ in (\ref{eqn:mathcalFn}) may be realized as a Fock space built out of the repeated application of fermionic creation operators, and similarly for the $\Lambda(U^f_0(-n))$, while each symmetric algebra factor $S(U_n^b(-n))$ may be realized as a Fock space built out of bosonic creation operators, and similarly for the $S(U^b_0(-n))$. 
Thinking along these lines, it is tempting to interpret $\deg$ in (\ref{eqn:res-con:FL0}--\ref{eqn:TWg-trace2}) and (\ref{eqn:fVg}) as the grading operator $L_0$ defined by an action of the Virasoro algebra with central charge $c=24\vh$ on $V$, for $\vh$ as in (\ref{eqn:res-con:c}).
Indeed, such an action exists in the case that $W=W_{{\rm 3C}}$, in the notation of \cite{pmt}, as $V^W$ is isomorphic to a tensor power of the 3C-twisted module $\vn_{\rm 3C}$ for the moonshine module VOA for this choice of $W$, according to Theorem 4.5 of op.\ cit. 
(See also Example \ref{eg:out-inv} below.) However, the construction we have given does not (obviously) entail any Virasoro action, or VOA or CFT structure on $V^W$ in general.
\end{rmk}

\section{Outlook}\label{sec:out}

We provide further perspective on the results of this paper in this section. 
To do this we first explain an approach for applying traces of singular moduli to the problem of constructing an inverse to $\SQ$ in \S~\ref{sec:out-inv}.
Then, with this approach in place, we are in a better position to appreciate the import of our results for penumbral and umbral moonshine. We comment on this in \S~\ref{sec:out-imp}.

\subsection{Inversion}\label{sec:out-inv}

Here we consider the problem of inverting the construction $\SQ$. 
That is, given a weakly holomorphic $G$-module $V$ of weight $0$ as in (\ref{eqn:res-for:V}), 
and given the hypothesis that $V=V^W=\SQ(W)$ (cf.\ (\ref{eqn:res-for:SQW})),
for some weakly holomorphic $G$-module $W$ of weight $\frac12$ and positive integer index, we here pursue a strategy for determining the values $C^W_g(D,r)$ as in (\ref{eqn:res-for:CWgDr}), that characterize the $G$-module structure on $W$, 
given just the functions $f^V_g=T^W_g$ (cf.\ (\ref{eqn:res-for:TWg})), that characterize the $G$-module structure on $V=\SQ(W)$. 
The challenge in this is that the $T^W_g$ depend only on the $C^W_g(D,r)$ for perfect-square values of $D$, according to (\ref{eqn:res-for:PsiWg}--\ref{eqn:res-for:etaWg}).
In particular, we can read off the $C^W_g(n^2,n)$ for $n\in \ZZ$ from these formulas (\ref{eqn:res-for:PsiWg}--\ref{eqn:res-for:etaWg}), if given the functions $f^V_g=T^W_g$. 
Thus we are tasked with determining the values $C^W_g(D_1n^2,r_1n)$ for $D_1>1$ fundamental, for $r_1$ such that $D_1\equiv r_1^2\xmod 4m$, and for integers $n$, given just the $C^W_g(n^2,n)$.
As we will see, it develops that we can solve this problem for $g=e$ the identity element of $G$ in terms of traces of singular moduli, 
as introduced in \cite{ZagierTSM}, by applying results of \cite{BruinierOno2010}.
The problem of extending our approach to non-identity values of $g$ depends upon a natural extension of the methods of op.\ cit.\ (cf.\ \S~\ref{sec:int-twi}).

To explain our approach let $G$ be a finite group, let $V$ be a weakly holomorphic $G$-module of weight $0$, 
and let us assume, as above, that a rational weakly holomorphic $G$-module $W$ of weight $\frac12$ and index $m$ such that $V=V^W=\SQ(W)$ exists. 
Next let $D_1>1$ be a fundamental discriminant that is a square modulo $4m$, and let $r_1$ be a witness to this fact, so that $D_1\equiv r_1^2\xmod 4m$. 
Also let $W_m$ denote the coset of $\Gamma_0(m)$ represented by the Fricke involution, $\frac1{\sqrt{m}}\left(\begin{smallmatrix}0&-1\\m&0\end{smallmatrix}\right)$, and write $\Gamma_0(m)+m$ for the extension of $\Gamma_0(m)$ it defines, 
\begin{gather}\label{eqn:out-inv:m+m}
	\Gamma_0(m)+m:=\Gamma_0(m)\cup W_m.
\end{gather}
Then according to Theorem 6.1 of \cite{BruinierOno2010} we have that the twisted Borcherds product $\Psi^{W}_{D_1,r_1}$, defined by (\ref{eqn:int-twi:PsiWDeltae}), is a meromorphic function on $\HH$ that is invariant for the action of $\Gamma_0(m)+m$.
Moreover, the induced meromorphic function on $X_0(m)$ has divisor given by 
\begin{gather}\label{eqn:out-inv:ZWD1r1}
	Z^W_{D_1,r_1}:=
	\sum_{r\xmod 2m}
	\sum_{\substack{
	D<0
	\\
	D\equiv r^2\xmod 4m
	}}
	C^W(D,r)Z^{(m)}_{D_1,r_1}(D,r),
\end{gather}
where $Z^{(m)}_{D_1,r_1}(D,r)$ is as in (\ref{eqn:pre-hee:ZmD1r1}).

Now 
suppose that $m$ is chosen so that 
the compact Riemann surface
$X_0(m)+m$ 
associated to $\Gamma_0(m)+m$ (cf.\ (\ref{eqn:int-hee:X0m})) 
has genus zero. 
Then $\Gamma_0(m)+m$ admits a unique normalized principal modulus (a.k.a.\ Hauptmodul), $T^{(m+m)}$. From the statement just made about $\Psi^W_{D_1,r_1}$, 
and in particular from the explicit description (\ref{eqn:out-inv:ZWD1r1}) of the divisor $Z^W_{D_1,r_1}$ (cf.\ (\ref{eqn:pre-hee:ZmD1r1})), 
we may write $\Psi^W_{D_1,r_1}$ as a product of (finitely many) integer powers of terms of the form $T^{(m+m)}(\tau)-T^{(m+m)}(\alpha_Q)$, for various CM points $\alpha_Q$ (cf.\ (\ref{eqn:pre-hee:ZmD1r1})).
This is useful because $T^{(m+m)}$ is replicable in the sense of \cite{Conway:1979qga} (see also \cite{CN95}). 
According to \S~1 of \cite{CN95} (see also \S~5 of \cite{Carnahan2010}) this means that there exist holomorphic functions $T^{(m+m)}_{(a)}$ for $a>0$ such that 
\begin{gather}\label{eqn:out-inv:TmtauTmalpha}
	T^{(m+m)}(\tau)-T^{(m+m)}(\a_Q) 
	=
	q^{-1}\exp\left(
		-\sum_{n>0}\sum_{ad=n}\sum_{b\xmod d} 
		T^{(m+n)}_{(a)}\left(\frac{a\a_Q+b}{d}\right)
		\frac{q^n}{n}
	\right).
\end{gather}
(In fact, according to \cite{Conway:1979qga}
we have $T^{(m+m)}_{(a)}=T^{(m+m)}$ if $\gcd(a,m)=1$, and
$T^{(m+m)}_{(a)}=T^{(m')}$ for $m'=\frac{m}{\gcd(a,m)}$ otherwise, where $T^{(m)}$ denotes the normalized principal modulus associated to $\Gamma_0(m)$. In particular,
$X_0(m')$ is genus zero whenever $X_0(m)+m$ is genus zero and $m'$ is a proper divisor of $m$.)

Now by comparing powers of $q$ in $\log \Psi^W_{D_1,r_1}(\tau)$ and $\log(T^{(m+m)}(\tau)-T^{(m+m)}(\a_Q))$, where the latter is computed using (\ref{eqn:out-inv:TmtauTmalpha}), 
we can deduce explicit expressions for the $C^W(D_1n^2,r_1n)$ in terms of the values $T^{(m+m)}_{(a)}(\tfrac{a\a_Q+b}{d})$, for explicitly determined CM points $\a_Q$.

\begin{eg}\label{eg:out-inv}
Suppose that $V$ is such that $f^V(\tau)=f^V_e(\tau)$ is the elliptic modular invariant $j$, that appears in (\ref{eqn:int-pro:j}). 
(Such a $G$-module $V=(\vn_{\rm 3C})^{\otimes}$ is considered in \cite{pmt}, for $G=\Th$ the sporadic simple Thompson group. Cf.\ the discussion in \S~\ref{sec:int-moo}, and see also Remark \ref{rmk:res-con:noVOA})
Noting that $j$ is invariant for the action of $\SL_2(\ZZ)=\Gamma_0(1)$ we guess that the corresponding weakly holomorphic $G$-module $W$ of weight $\frac12$ has index $m=1$.
The relevant normalized principal modulus in this case is $T^{(1)}=j-744$, and we have $T^{(1)}_{(a)}=T^{(1)}$ for all $a>0$ in (\ref{eqn:out-inv:TmtauTmalpha}). 
(We write $T^{(1)}$ instead of $T^{(1+1)}$ because $\Gamma_0(1)+1=\Gamma_0(1)$.) Thus, given $D_1>1$ fundamental, and taking $r_1\equiv D_1\xmod 2$, 
we expect $\Psi^W_{D_1,r_1}$ as in (\ref{eqn:int-twi:PsiWDeltae}) to coincide with a product of expressions of the form
\begin{gather}\label{eqn:out-inv:JtauJalpha}
	J(\tau)-J(\a_Q)=
	q^{-1}\exp\left(
		-\sum_{n>0}
		\sum_{{ad=n}}
		\sum_{b\xmod d} 
		J\left(\frac{a\a_Q+b}{d}\right)
		\frac{q^n}{n}
	\right),
\end{gather}
for CM points $\a_Q$,
where we follow tradition in writing $J$ for $T^{(1)}=j-744$.

The particular CM points arising are determined by the divisor $Z^W_{D_1,r_1}$, which according to (\ref{eqn:out-inv:ZWD1r1}) depends only on the singular part of $W$. 
The singular part of $W$ also determines the generalized class number $H=H^W$ that appears in the definition (\ref{eqn:int-mod:PsiWg}) of $\Psi^W=\Psi^W_e$, according to (\ref{eqn:H}),
so we can use knowledge of the latter to constrain the possibilities for the former. 
Let us suppose that $W_{0,0}$ is vanishing, so that $f^V=T^W_e=\Psi^W_e$ (cf.\ (\ref{eqn:res-for:PsiWg})--(\ref{eqn:res-for:TWg})).
Then from (\ref{eqn:int-pro:j}) we have that $H=1$. Comparing with (\ref{eqn:breveG1}) we conclude that the simplest possibility is that $C^W(D,r)$ vanishes for $D\leq 0$ unless $D=-3$ and $r=1$, in which case $C^W(-3,1)=3$.
Supposing that this possibility holds we can compute $Z^W_{D_1,r_1}$ concretely. Indeed, using (\ref{eqn:pre-hee:ZmD1r1}) and (\ref{eqn:out-inv:ZWD1r1}) we obtain that
\begin{gather}\label{eqn:out-inv:ZWD1r1-eg}
	Z^W_{D_1,r_1}=3Z^{(1)}_{D_1,r_1}(-3,1)=
		\sum_{Q\in\mc{Q}^{(1)}_{-3D_1,r_1}}
	3\chi^{(1)}_{D_1}(Q)
	{\overline{\a_Q}},
\end{gather}
where $\chi^{(1)}_{D_1}$ is as in (\ref{eqn:pre-hee:chiD0N}).
Thus, applying (\ref{eqn:out-inv:JtauJalpha}), and the fact that 		
$\sum_{Q\in\mc{Q}^{(1)}_{-3D_1,r_1}}\chi^{(1)}_{D_1}(Q)=0$,
we have
\begin{gather}
\begin{split}\label{eqn:out-inv:PsiWD1r1-trc}
	\Psi^W_{D_1,r_1}(\tau)
	&=
	\prod_{Q\in\mc{Q}^{(1)}_{-3D_1,r_1}}
	\left(J(\tau)-J(\alpha_Q)\right)^{3\chi^{(1)}_{D_1}(Q)}\\
	&=
	\exp\left(
	-\sum_{n>0}
	\sum_{Q\in\mc{Q}^{(1)}_{-3D_1,r_1}}
	3\chi^{(1)}_{D_1}(Q)
	\sum_{{ad=n}}
	\sum_{{b\xmod d}}
	J\left(\frac{a\a_Q+b}{d}\right)
	\frac{q^n}{n}
	\right).
\end{split}
\end{gather}

To obtain concrete expressions for the coefficients $C^W(D_1n^2,r_1n)$ we apply the Gauss identity 
\begin{gather}
\sum_{a\xmod D}\left(\frac{D}{a}\right)\ex\left(\frac{ab}{D}\right)=\sqrt{D}\left(\frac{D}{b}\right)
\end{gather}
to the $g=e$ case of (\ref{eqn:int-twi:PsiWDeltag}) so as to obtain
\begin{gather}\label{eqn:out-inv:PsiWD1r1-cff}
\Psi^W_{D_1,r_1}(\tau)
=
	\exp\left(
	-\sum_{n>0}
	\sqrt{D_1}
	\sum_{{ad=n}}
	a\left(\frac{D_1}{d}\right)
	C^W(D_1a^2,r_1a)
	\frac{q^n}{n}
	\right).
\end{gather} 
Then, identifying the coefficients of $q^n$ in the logarithms of (\ref{eqn:out-inv:PsiWD1r1-trc}) and (\ref{eqn:out-inv:PsiWD1r1-cff}) we obtain
\begin{gather}\label{eqn:out-inv:logcoeff}
	\sqrt{D_1}
	\sum_{{ad=n}}
	a\left(\frac{D_1}{d}\right)
	C^W(D_1a^2,r_1a)
=
	\sum_{Q\in\mc{Q}^{(1)}_{-3D_1,r_1}}
	3\chi^{(1)}_{D_1}(Q)
	\sum_{{ad=n}}
	\sum_{{b\xmod d}}
	J\left(\frac{a\a_Q+b}{d}\right)
\end{gather} 
for $n>0$. Thus we determine the $C^W(D_1n^2,r_1n)$ recursively in terms of the $C^W(D_1a^2,r_1a)$ for $a<n$, and the values $J(\frac{a\a_Q+b}{d})$ for $Q\in \mc{Q}^{(1)}_{-3D_1,r_1}$. In particular, taking $n=1$ we obtain the formula
\begin{gather}\label{eqn:out-inv:CWD1r1-tsm}
	C^W(D_1,r_1)
=
	\frac{1}{\sqrt{D_1}}
	\sum_{Q\in\mc{Q}^{(1)}_{-3D_1,r_1}}
	3\chi^{(1)}_{D_1}(Q)
	J\left(\a_Q\right)
\end{gather} 
for $D_1>1$ fundamental. We refer to the right hand-side of (\ref{eqn:out-inv:CWD1r1-tsm}) as a (twisted) trace of singular moduli, following \cite{ZagierTSM}.
\end{eg}
From the above discussion we conclude that we can solve the problem of recovering $W$ from $V=\SQ(W)$ in terms of traces of singular moduli, at least in the case that $G=\{e\}$ is trivial,
so long as the graded dimension function $f^V=f^V_e$ is invariant for a group $\Gamma_0(m)+m$ that has genus zero, and so long as we have some knowledge of the singular terms in the graded dimension function $F^W=F^W_e$ of $W$. (We can actually weaken the condition that 
$\Gamma_0(m)+m$ have genus zero,
and handle non-trivial $G$ in certain circumstances. 
See Remarks \ref{rmk:out-inv:gz} and \ref{rmk:out-inv:mlt} below.)

In principle the problem of recovering the $C^W_g(D_1n^2,r_1n)$ for non-trivial $g\in G$ can be handled in a directly similar way, except that the results of \cite{BruinierOno2010} do not extend to the twined twisted Borcherds products $\Psi^W_{D_1,r_1,g}$ of (\ref{eqn:int-twi:PsiWDeltag}), for $g\neq e$ (but see Remark \ref{rmk:out-inv:mlt} below). 
As we have mentioned in \S~\ref{sec:int-twi}, we expect that the main step in determining such an extension of the results of op.\ cit.\ will be an 
analysis of twisted Siegel theta functions $\Theta^{(m)}_{D_1,r_1,N}$, based on the lattices $L_{m,N}$ (see (\ref{eqn:pre-lat:LmN})) for $N>1$.
For concrete hints as to what to expect from such an analysis we 
refer the reader to \S~4.3 of \cite{pmt}, wherein we present (mostly conjectural) 
expressions in terms of traces of singular moduli for (most of) the McKay--Thompson series associated to a weakly holomorphic $\Th$-module $W_{{\rm 3C}}$ of weight $\frac12$. This weakly holomorphic $\Th$-module $W_{\rm 3C}$ has index $1$, and is mapped by $\SQ$ to a $\Th$-module $(\vn_{\rm 3C})^{\otimes 3}$ with graded dimension given (up to rescaling) by $j$, just as in Example \ref{eg:out-inv}.

To better appreciate the generality, and specificity, of the method we have described for inverting $\SQ$ via traces of singular moduli, we offer the following remarks.

\begin{rmk}\label{rmk:out-inv:gz}
It is not necessary that $m$ be such that $\Gamma_0(m)+m$ has genus zero in order for the approach we have sketched to apply. 
This is because the invariance group of $V=V^W=\SQ(W)$ will be an extension 
\begin{gather}\label{eqn:out-inv:gz-ALext}
\Gamma_0(m)+n,n',\dots,m
\end{gather} 
of $\Gamma_0(m)+m$ by further Atkin--Lehner involutions, $W_n$, $W_{n'}$, \dots, in addition to $W_m$ (cf.\ (\ref{eqn:out-inv:m+m})), under suitable conditions on $W$. 
If this extension (\ref{eqn:out-inv:gz-ALext})
has genus zero then the above approach goes through, with the associated normalized principal modulus 
taking on the role of $T^{(m+m)}$.
\end{rmk}

\begin{rmk}\label{rmk:out-inv:opt}
The
fact that the right hand-side of (\ref{eqn:out-inv:CWD1r1-tsm}) reduces to a formula in terms of quadratic forms of a single discriminant depends upon our assumption in Example \ref{eg:out-inv} that there is a unique negative discriminant $D=D_0<0$ such that $C^W(D,r)$ does not vanish.
As we see from (\ref{eqn:out-inv:ZWD1r1}), the divisor of $\Psi^W_{D_1,r_1}$ generally involves CM points of every discriminant $DD_1$ such that $D<0$ and $C^W(D,r)$ is not zero.
Thus the same is true for the sums in general counterparts to 
(\ref{eqn:out-inv:ZWD1r1-eg}) and (\ref{eqn:out-inv:CWD1r1-tsm}).
\end{rmk}

\begin{rmk}\label{rmk:out-inv:mlt}
It develops that the untwined twisted Borcherds products $\Psi^W_{D_1,r_1}$ of (\ref{eqn:int-twi:PsiWDeltae}) can sometimes be used to recover expressions in terms of traces of singular moduli for Fourier coefficients $C^W_g(D,r)$, for non-trivial $g\in G$. This is 
because 
the assignment $F(\tau)\mapsto F(M\tau)$, 
for $M$ a positive integer, 
defines a level-raising map 
\begin{gather}
	\textsl{V}^\wh_{\frac12,mM}(N)\to\textsl{V}^\wh_{\frac12,m}(MN)
\end{gather}
(and similarly with $\mathbb{V}$ in place of $\textsl{V}$),
which in special situations can be used to relate coefficients $C^W_g(D,r)$ and $C^{W'}_{g'}(D,r)$, for 
weakly holomorphic modules $W$ and $W'$ of weight $\frac12$, 
and respective indexes $m$ and $m'$, when it holds that $mo(g)=m'o(g')$. 
That is, in certain circumstances we can circumvent the problem that $F^W_g$ has level $N_g>1$ (cf.\ (\ref{eqn:res-for:FWgnVwh})), by identifying its coefficients as those of $F^{W'}_e$, for some weakly holomorphic $G'$-module $W'$ of weight $\frac12$ and index $m'=mo(g)$, for some auxiliary group $G'$. 
\end{rmk}

\subsection{Import}\label{sec:out-imp}

We conclude with some comments on the import of this work for moonshine.

To begin  we point out that for each lambdency $\pd=(D_0,\ell)$  
of penumbral moonshine, as described in \cite{pmo}, 
the $G^{(\pd)}$-module $W^{(\pd)}$ is rational weakly holomorphic of weight $\frac12$ with (positive integer) index $m$, in the sense of \S~\ref{sec:res-for}, where $m$ is the level of $\ell=m+n,n',\dots,m$. 
By a similar token, for each lambency $\ell=m+n,n',\dots$ of umbral moonshine, as described in \cite{Cheng:2012tq,cheng2014umbral,cheng2018weight}, the $G^{(\ell)}$-module $K^{(\ell)}$ is rational weakly holomorphic mock modular of weight $\frac12$ with index $-m$, in the sense of Remark \ref{rmk:res-for:jacmod}.
Thus, 
according to Remark \ref{rmk:res-for:nonegm},
the construction $\SQ$ of (\ref{eqn:res-for:SQW}) applies to all cases of penumbral moonshine, but to no cases of umbral moonshine. 
However, the notion of twisted Borcherds product, as formulated in \S~\ref{sec:int-twi}, allows us to put this in a broader perspective, as we will presently see.

With respect to twisted Borcherds products, 
it is significant that
the special circumstances 
mentioned in Remarks \ref{rmk:out-inv:gz}--\ref{rmk:out-inv:mlt} all manifest in penumbral 
and umbral moonshine.
To explain this write $F^{(\pd)}_g$ in place of $F^W_g$ 
when $W=W^{(\pd)}$ and $g\in G^{(\pd)}$, for $\pd=(D_0,\ell)$ a lambdency of penumbral moonshine.
Then for such $W=W^{(\pd)}$
the $D_0$ arising in Remark \ref{rmk:out-inv:opt} is the $D_0$ in $\pd=(D_0,\ell)$, 
and the genus zero group (\ref{eqn:out-inv:gz-ALext}) 
arising in Remark \ref{rmk:out-inv:gz} is specified by the 
lambency 
symbol $\ell=m+n,n',\dots,m$.
In umbral moonshine the situation is similar, except that $D_0$ is $1$ in every case,
so it is suppressed from notation, and the lambency symbol $\ell=m+n,n',\dots$ specifies a genus zero group that does not include the Fricke involution $W_m$.
(It develops that the identity (\ref{eqn:pre-mod:Fminusr}) is responsible for the presence, or absence, of $W_m$, and there are analogous identities for the $n,n',\dots$ in (\ref{eqn:out-inv:gz-ALext}). 
See \cite{Cheng:2016klu} or  \cite{pmo} for more detail.)
Also, concrete examples of the multiplicative relations of Remark \ref{rmk:out-inv:mlt}, wherein coefficients of $F^{(\pd)}_g$ are written in terms of those of $F^{(\pd')}=F^{(\pd')}_e$, for suitable 
lambdencies $\pd$ and $\pd'$, and $g\in G^{(\pd)}$, can be found in Tables 4--5 of \cite{pmo},
and multiplicative relations for umbral moonshine can be found in Tables 8--9 of \cite{cheng2014umbral} (see also Table 2 of \cite{Cheng:2016klu}). 

The primary motivation for the data $D_0$ and $\ell$ that constitute a penumbral lambdency $\pd=(D_0,\ell)$ is that they 
allow us to formulate the sense in which $F^{(\pd)}=F^{(\pd)}_e$ is genus zero and optimal, and thereby serves as a natural weight $\frac12$ counterpart to a principal modulus. 
We refer to the introduction of \cite{pmo}, and especially \S~1.6 of op.\ cit., for a detailed discussion of this. 
(See also \S~4.4 of \cite{pmt}, and the forthcoming work \cite{pmz}.) 
We emphasize here that the multiplicative relations of penumbral and umbral
moonshine,
which we have sketched in a general way in Remark \ref{rmk:out-inv:mlt},
depend upon the genus zero and optimality properties that 
we have just described, 
in relation to 
Remarks \ref{rmk:out-inv:gz}--\ref{rmk:out-inv:opt}. 
We refer to \S~4.2 of \cite{pmo}, and also the latter part of \S~4.1 of op.\ cit., for more detail on the multiplicative relations of penumbral moonshine. 
A counterpart discussion for umbral moonshine can be found in \cite{cheng2014umbral}.

We have indicated that the special circumstances of Remarks \ref{rmk:out-inv:gz}--\ref{rmk:out-inv:mlt} hold in umbral moonshine, but our lift of the singular theta lift (\ref{eqn:res-for:SQW}) does not apply to umbral moonshine, according to Remark \ref{rmk:res-for:nonegm}. 
This brings us back 
to the results of \cite{ORTL,Cheng:2016klu} that we referred to in \S~\ref{sec:int-twi}, wherein untwined twisted Borcherds products 
very similar to the $\Psi^W_{D_1,r_1}$ of (\ref{eqn:int-twi:PsiWDeltae}) 
are used to constructively connect cases of umbral moonshine with principal moduli for genus zero groups. 
In fact the essential difference in the umbral case is that 
$D_1$ should be a negative fundamental discriminant rather than a positive one, because 
the modules of umbral moonshine are weakly holomorphic (mock) modular of weight $\frac12$ with negative index, rather than positive index (cf.\ Remark \ref{rmk:res-for:jacmod}).
The untwisted Borcherds product construction (\ref{eqn:int-mod:PsiWg}) that underpins $\SQ$ should be regarded as the $D_1=r_1=1$ case of the more general twisted construction (\ref{eqn:int-twi:PsiWDeltag}) that we specified in \S~\ref{sec:int-twi}, so we may say that our main result in this paper, Theorem \ref{thm:res}, fails to apply to umbral moonshine, for the simple reason that $D_1=1$ is not negative. 

Our final comment is that, apart from the fact that $D_1$ in (\ref{eqn:int-twi:PsiWDeltae}) should be negative in the umbral setting, the aforementioned results of \cite{ORTL,Cheng:2016klu} are very similar to what we have described in this section, in that they produce formulae very similar to (\ref{eqn:out-inv:CWD1r1-tsm}), for the coefficients of the graded dimension functions of umbral moonshine in terms of traces of singular moduli.
We conclude that the apparent 
prejudice of our Borcherds product construction $\SQ$ (\ref{eqn:res-for:SQW}), for penumbral moonshine over umbral moonshine, 
is appeased if we can positively answer the question posed
at the end of \S~\ref{sec:int-twi}, on lifting the twisted Siegel theta lift (\ref{eqn:int-twi:int}), for arbitrary fundamental $D_1$, to the level of $G$-modules too.


\addcontentsline{toc}{section}{References}

\setstretch{1.08}

\newcommand{\etalchar}[1]{$^{#1}$}


\begin{thebibliography}{DHVW86}


\bibitem[Bor92]{Borcherds1992}
R.~Borcherds,
\newblock \emph{Monstrous moonshine and monstrous {L}ie superalgebras}.
\newblock Invent.\ Math.\ {\bf 109} (1992), no.~2, 405--444.

\bibitem[Bor95]{Borcherds1995}
R.~Borcherds,
\newblock \emph{Automorphic forms on {$O_{s+2,2}(\mathbb{R})$} and infinite products}.
\newblock Invent.\ Math.\ {\bf 120} (1995), no.~1, 161--213.

\bibitem[Bor98a]{Borcherds:1996uda}
R.~Borcherds,
\newblock \emph{Automorphic forms with singularities on {G}rassmannians}.
\newblock Invent.\ Math.\ {\bf 132} (1998), no.~3, 491--562.

\bibitem[BO10]{BruinierOno2010}
J.~Bruinier and K.~Ono,
\newblock \emph{Heegner divisors, {$L$}-functions and harmonic weak {M}aass forms}.
\newblock Ann.~of Math.~(2) {\bf 172} (2010), no.~3, 2135--2181.

\bibitem[Car10]{Carnahan2010}
S.~Carnahan,
\newblock \emph{Generalized moonshine {I}: genus-zero functions}.
\newblock Algebra Number Theory {\bf 4} (2010), no.~6, 649--679.

\bibitem[Car12a]{Carnahan2012}
S.~Carnahan,
\newblock \emph{Generalized moonshine {II}: {B}orcherds products}.
\newblock Duke Math.~J.\ {\bf 161} (2012), no.~5, 893--950.

\bibitem[Car12b]{Carnahan:2012gx}
S.~Carnahan,
\newblock \emph{Generalized moonshine {IV}: monstrous Lie algebras}.
\newblock [arXiv:1208.6254 [math.RT]].

\bibitem[CD20]{Cheng:2016klu}
M.~Cheng and J.~Duncan,
\newblock \emph{Optimal Mock Jacobi Theta Functions}.
\newblock Adv.~Math., {\bf 372} (2020), 107284.


\bibitem[CDH14a]{Cheng:2012tq}
M.~Cheng, J.~Duncan, and J.~Harvey,
\newblock \emph{Umbral Moonshine}.
\newblock Commun.\ Number Theory Phys.\ {\bf 8} (2014), no.~2, 101--242.

\bibitem[CDH14b]{cheng2014umbral}
M.~Cheng, J.~Duncan, and J.~Harvey,
\newblock \emph{Umbral moonshine and the Niemeier lattices}.
\newblock Res.\ Math.\ Sci.\ {\bf 1} (2014), Art.~3, 81 pp.

\bibitem[CDH18]{cheng2018weight}
M.~Cheng, J.~Duncan, and J.~Harvey,
\newblock \emph{Weight one Jacobi forms and Umbral moonshine}.
\newblock J.~Phys.~A {\bf 51} (2018), no.~10, 104002, 37 pp.

\bibitem[CCN+85]{atlas}
J.~Conway, R.~Curtis, S.~Norton, R.~Parker and R.~Wilson,
\newblock \emph{Atlas of finite groups: maximal subgroups and ordinary characters for simple groups}.
\newblock With computational assistance from J.~G.~Thackray.
\newblock Oxford University Press, Eynsham, 1985.

\bibitem[CN79]{Conway:1979qga}
J.~Conway and S.~Norton,
\newblock \emph{Monstrous Moonshine}.
\newblock Bull.\ London Math.\ Soc.\ {\bf 11} (1979), no.~3, 308--339.

\bibitem[CN95]{CN95}
C.~Cummins and S.~Norton,
\newblock \emph{Rational Hauptmoduls are replicable}.
\newblock Canad.\ J.\ Math.\ {\bf 47} (1995), no.~6, 1201--1218.


\bibitem[DHR21]{pmo}
J.~Duncan, J.~Harvey, and B.~Rayhaun,
\newblock \emph{An {O}verview of {P}enumbral {M}oonshine}.
\newblock [arXiv:2109.09756 [math.RT]].

\bibitem[DHR22a]{pmt}
J.~Duncan, J.~Harvey, and B.~Rayhaun,
\newblock \emph{Two new avatars of moonshine for the Thompson group}.
\newblock [arXiv:2202.***** [math.RT]].

\bibitem[DHR22b]{pmz}
J.~Duncan, J.~Harvey, and B.~Rayhaun, 
\newblock \emph{Skew-holomorphic Jacobi forms and genus zero groups}.
\newblock In preparation.



\bibitem[FLM84]{flm}
I.~Frenkel, J.~Lepowsky and A.~Meurman,
\newblock \emph{A natural representation of the {F}ischer-{G}riess monster with the modular function {$J$} as character}.
\newblock Proc.\ Nat.\ Acad.\ Sci.\ U.S.A.\ {\bf 81} (1984), no.~10, Phys.\ Sci., 3256--3260.

\bibitem[FLM85]{FLMBerk}
I.~Frenkel, J.~Lepowsky and A.~Meurman,
\newblock \emph{A moonshine module for the {M}onster}. 
\newblock Vertex operators in mathematics and physics (Berkeley, Calif., 1983), 231--273,
\newblock Math.\ Sci.\ Res.\ Inst.\ Publ., {\bf 3}, Springer, New York, 1985.

\bibitem[FLM88]{frenkel1989vertex}
I.~Frenkel, J.~Lepowsky and A.~Meurman,
\newblock \emph{Vertex operator algebras and the Monster}.
\newblock Pure and Applied Mathematics, 134.
\newblock Academic Press, Inc., Boston, MA, 1988.


\bibitem[GM16]{Griffin2016}
M.~Griffin and M.~Mertens,
\newblock \emph{A proof of the {T}hompson moonshine conjecture}.
\newblock Res.\ Math.\ Sci.\ {\bf 3} (2016), Paper No.~36, 32 pp.

\bibitem[GKZ87]{GKZ87}
B.~Gross, W.~Kohnen, and D.~Zagier, 
\newblock \emph{Heegner points and derivatives of L-series. II.} 
\newblock Math.\ Ann.\ {\bf 278} (1987), no.~1--4, 497--562.

\bibitem[HM96]{Harvey:1995fq}
J.~Harvey and G.~Moore,
\newblock \emph{Algebras, BPS states, and strings}.
\newblock Nucl.~Phys.~B {\bf 463} (1996), no.~2--3, 315--368.

\bibitem[HR16]{Harvey:2015mca}
J.~Harvey and B.~Rayhaun,
\newblock \emph{Traces of singular moduli and moonshine for the Thompson group}.
\newblock Commun.\ Number Theory Phys.\ {\bf 10} (2016), no.~1, 23--62.

\bibitem[Kim06]{kim_2006}
C.~Kim,
\newblock \emph{Traces of singular values and {B}orcherds products}.
\newblock Bull.\ London Math.\ Soc.\ {\bf 38} (2006), no.~5, 730--740.

\bibitem[ORT-L14]{ORTL}
K.~Ono, L.~Rolen and S.~Trebat-Leder,
\newblock \emph{Classical and umbral moonshine: connections and $p$-adic properties.} 
\newblock J.~Ramanujan Math.~Soc.\ {\bf 30} (2015), no.~2, 135--159.

\bibitem[Ser77]{serrereps}
J.-P.~Serre,
\newblock \emph{Linear representations of finite groups.} 
\newblock Translated from the second French edition by Leonard L.~Scott, Graduate Texts in Mathematics, Vol.~42.
\newblock Springer-Verlag, New York-Heidelberg, 1977.

\bibitem[Tho76a]{MR399257}
J.~Thompson,
\newblock \emph{Finite groups and even lattices}.
\newblock J.~Algebra \textbf{38} (1976), no.~2, 523--524. 

\bibitem[Tho76b]{MR0399193}
J.~Thompson,
\newblock \emph{A conjugacy theorem for {$E_{8}$}}.
\newblock J.~Algebra \textbf{38} (1976), no.~2, 525--530.

\bibitem[Zag11]{ZagierTSM}
D.~Zagier,
\newblock \emph{Traces of singular moduli}.
\newblock Motives, polylogarithms and {H}odge theory, {P}art {I} ({I}rvine, {CA}, 1998), 211--244,
\newblock Int.\ Press Lect.\ Ser., 3, I, 
\newblock Int.\ Press, Somerville, MA, 2002.

\end{thebibliography}
\end{document}